\documentclass[UTF8]{article}
\setcounter{secnumdepth}{4}

\usepackage[margin=1.5in]{geometry}  
\usepackage{amssymb}             
\usepackage{amsmath}
\usepackage{mathtools}
\usepackage{slashed}            
\usepackage{amsfonts}              
\usepackage{amsthm}
\usepackage{breqn}
\usepackage{verbatim}              
\usepackage{enumitem}
\usepackage[colorlinks,linkcolor=black,anchorcolor=black,citecolor=black]{hyperref}

\usepackage[numbers]{natbib}
\usepackage{mathtools}
\usepackage{cases}
\usepackage{fancyhdr}

\bibliographystyle{apa}

\theoremstyle{definition}
\newtheorem{thm}{Theorem}[section]

\newtheorem{lem}[thm]{Lemma}
\newtheorem{prop}[thm]{Proposition}

\pagestyle{fancy}
\headheight 27pt
\rhead[]{\thepage}
\chead[\textsf{Free-boundary MHD}]{\textsf{Luo and Zhang}}
\lhead[\thepage]{}
\rfoot[]{}
\cfoot{}
\lfoot{}

\newcommand{\R}{\mathbb{R}}

\newcommand{\T}{\mathbb{T}}
\newcommand{\TP}{\overline{\partial}{}}

\newcommand{\curl}{\text{curl }}
\newcommand{\dive}{\text{div }}

\newcommand{\q}{\quad}
\newcommand{\p}{\partial}

\newcommand{\DD}{\mathcal{D}}

\newcommand{\nab}{\nabla}

\newcommand{\di}{\text{div}\,}

\newcommand{\cp}{\overline{\partial}{}}

\newcommand{\PP}{\mathcal{P}}

\numberwithin{equation}{section}
\setcounter{secnumdepth}{4}
\setcounter{tocdepth}{4}
\usepackage{xcolor}

\begin{document}
\title{A priori Estimates for the Incompressible Free-Boundary Magnetohydrodynamics Equations with Surface Tension}
\author{Chenyun Luo\thanks{Department of Mathematics, The Chinese University of Hong Kong, Shatin, NT, Hong Kong. E-mail:
\texttt{cluo@math.cuhk.edu.hk}}\,\, and Junyan Zhang \thanks{Johns Hopkins University, Baltimore, MD 21218, USA. E-mail:
\texttt{zhang.junyan@jhu.edu}} }
\maketitle

\begin{abstract}
We consider the three-dimensional incompressible free-boundary magnetohydrodynamics (MHD) equations in a bounded domain with surface tension on the boundary. We establish a priori estimate for solutions in the Lagrangian coordinates with $H^{3.5}$ regularity. To the best of our knowledge, this is the first result focusing on the incompressible ideal free-boundary MHD equations with surface tension. It is worth pointing out that the $1/2$-extra spatial regularity for the flow map $\eta$, such as in \cite{alazard2014cauchy, kukavica2017local, luozhang2019MHD2.5}, is no longer required in this manuscript thanks to the presence of the surface tension on the boundary. 
\end{abstract}
\tableofcontents
\section{Introduction}
The goal of this manuscript is to investigate the solutions in Sobolev spaces for the following incompressible inviscid MHD equations in a moving domain with surface tension on the boundary:
\begin{equation}
\begin{cases}
\partial_t u+u\cdot\nabla u-B\cdot\nabla B+\nabla (p+\frac{1}{2}|B|^2)=0~~~& \text{in}~\DD; \\
\partial_t B+u\cdot\nabla B-B\cdot\nabla u=0,~~~&\text{in}~\DD; \\
\dive u=0,~~\dive B=0,~~~&\text{in}~\DD,
\end{cases}\label{MHD}
\end{equation}
describing the motion of conducting fluids in an electromagnetic field, where $\DD={\cup}_{0\leq t\leq T}\{t\}\times \Omega(t)$ and $\Omega(t)\subset \R^3$ is the domain occupied by the fluid  whose boundary $\p\Omega(t)$ moves with the velocity of the fluid.  Under this setting, the fluid velocity $u=(u_1,u_2,u_3)$, the magnetic field $B=(B_1,B_2,B_3)$, the fluid pressure $p$ and the domain $\DD$ are to be determined; in other words, given a simply connected bounded domain $\Omega(0)\subset \R^3$ and the initial data $u_0$ and $B_0$ satisfying the constraints $\di u_0=0$ and $\di B_0=0$, we want to find a set $\DD$ and the vector fields $u$ and $B$ solving \eqref{MHD} satisfying the initial conditions:
\begin{equation}
\Omega(0)=\{x: (0,x)\in \DD\},\q (u,B)=(u_0, B_0),\q \text{in}\,\,\{0\}\times \Omega_0.
\end{equation}
The quantity $P:=p+\frac{1}{2}|B|^2$ (i.e., the total pressure) plays an important role here in our analysis. It determines the acceleration of the moving surface boundary.  

We also require the following boundary conditions on the free boundary $\p\DD={\cup}_{0\leq t\leq T}\{t\}\times \p\Omega(t)$:
\begin{equation}
\begin{cases}
(\p_t+u\cdot \nabla )|_{\partial\DD}\in \mathcal{T}(\partial\DD) \\
P=\sigma\mathcal{H}~~~&\text{on}~\partial\DD, \\
B\cdot \mathcal{N}=0~~~&\text{on}~\partial\DD,
\end{cases}\label{MHDB}
\end{equation} 
where $\mathcal{N}$ is the exterior unit normal to $\p\Omega(t)$, $\sigma>0$ is the coefficient of surface tension, $\mathcal{H}$ is the mean curvature of the moving boundary embedded in $\mathbb{R}^3$. The first condition of \eqref{MHDB} means that the boundary moves with the velocity of the fluid. The second condition in \eqref{MHDB} suggests that the motion of the fluid is \textit{under the influence of the surface tension, as opposed to the case without surface tension} ($\sigma=0$).  Also, we remark here that $\mathcal{H}$ is a function of the unknowns and thus not known a priori. The third condition of \eqref{MHDB}, $B\cdot\mathcal{N}=0$ on $\p\Omega(t)$ implies that the fluid is a perfect conductor; in other words, the induced electric field $\mathcal{E}$ satisfies $\mathcal{E}\times \mathcal{N}=0$ on $\p\Omega(t)$. 

The physical energy is conserved, i.e., denoting $D_t:=\p_t+u\cdot \nab $, and invoking the divergence free condition for both $u$ and $B$, we have:

\[
\begin{aligned}
&~~~~~\frac{d}{dt}\left(\frac{1}{2}\int_{\Omega(t)} |u|^2 + \frac{1}{2}\int_{\Omega(t)} |B|^2+\sigma\int_{\p\Omega(t)} dS(\Omega(t))\right)\\
&=\int_{\Omega(t)} u\cdot D_tu+\int_{\Omega(t)}B\cdot D_t B+\frac{d}{dt}\left(\sigma\int_{\p\Omega(t)} dS(\Omega(t))\right)\\
&=-\int_{\Omega(t)}u\cdot \nab (p+\frac{1}{2}|B|^2)+\int_{\Omega(t)}u\cdot (B\cdot \nab B) +\int_{\Omega(t)}B\cdot (B\cdot \nab u)\\
&~~~~+\frac{d}{dt}\left(\sigma\int_{\p\Omega(t)} dS(\Omega(t))\right)\\
&=-\int_{\p\Omega(t)}(u\cdot \mathcal{N})P+\frac{d}{dt}\left(\sigma\int_{\p\Omega(t)} dS(\Omega(t))\right)\\
&~~~~+\int_{\Omega(t)}u\cdot (B\cdot \nab B)-\int_{\Omega(t)}u\cdot (B\cdot \nab B)=0,
\end{aligned}
\]where the first line in the last equality vanishes as shown in (2.1) in the paper \cite{shatah2008geometry}.

This motivates the construction of the higher order energy for \eqref{MHD}-\eqref{MHDB}. We refer Section \ref{tang} for the details. 

\subsection{History and background}\label{background}
The MHD equations describe the behavior of an electrically conducting fluid (e.g., a plasma) acted on by a magnetic field. In particular, the free-boundary MHD equations (also known as the plasma-interface problem) describe the phenomenon when the conducting fluid is separated from the outside wall by a vacuum. 
\bigskip

\noindent\textbf{An overview of the previous results}

In the absence of the magnetic field, i.e. $B=0$ in \eqref{MHD}, the problem reduces to the well-known incompressible free-boundary Euler equations, whose local well-posedness in Sobolev spaces was obtained first by Wu \cite{wu1997LWPww, wu1999LWPww}  for the irrotational case with $\sigma=0$, assuming the physical sign condition $-\nab_N p\geq \epsilon_0>0$ holds on $\p\DD_t$. This condition plays a crucial role in establishing the above well-posedness results for Euler equations. It was found by Ebin \cite{ebin1987equations} that the incompressible Euler equations is ill-posed when physical sign condition fails.   Extensions including the case without the irrotationalility assumption have been studied extensively in the past two decades, without attempting to be exhaustive, we refer \cite{christodoulou2000motion, coutand2007LWP, kukavica2017local, lindblad2002, lindblad2005well,   zhang2008free} for more details. 

On the other hand, the free boundary Euler equations behaves differently when $\sigma>0$. The surface tension is known to have regularizing effect on the moving surface. As a consequence, the physical sign condition is no longer needed when establishing the local well-posedness. We refer \cite{SchweizerFreeEuler, shatah2008geometry, shatah2008priori, shatah2011local} for more details. 

The free-boundary MHD equations, nevertheless, is far less well-understood than the free-boundary Euler equations. When $\sigma=0$, under the physical sign condition\footnotemark
\begin{equation}
-\nab_{\mathcal{N}} (p+\frac{1}{2}|B|^2) \geq \epsilon_0>0, \label{Tsign}
\end{equation}
 Hao-Luo \cite{hao2014priori} proved the a priori energy estimate with $H^4$ initial data and the local-wellposedness was established by Secchi-Trakhinin \cite{secchi2013well} and Gu-Wang \cite{gu2016construction}. Also, we remark here that in \cite{hao2018ill}, the authors proved that this problem is ill-posedness when \eqref{Tsign} is violated in the case of dimension 2.  We also mention here that in \cite{luozhang2019MHD2.5}, we proved a priori estimate with minimal regularity assumptions on the initial data (i.e., $v_0, B_0\in H^{2.5+\delta}$) in a \textit{small} fluid domain. Unlike the Euler equations, this assumption on the smallness of the volume of the fluid is crucial here due to the physical sign condition is unable to stabilize the MHD flow under \textit{low regularity assumptions}.  We will discuss more about this in the following paragraphs.

\footnotetext{In \cite{hao2017motion, sun2017well}, the authors studied the a priori energy estimate and local well-posedness, respectively, for the free-boundary MHD equations with nontrivial vacuum magnetic field under different stabilising assumptions. }
 
In \cite{GuoMHDSTviscous}, the authors proved the global well-posedness and exponentially decaying rate of the viscous and resistive free-boundary MHD equations when $\sigma>0$. In that paper, the kinematic viscosity and the magnetic diffusion allow them to control the enhanced regularity of the flow map, while this is impossible in the case of inviscid MHD without magnetic diffusion. In \cite{chendingMHDST}, Chen-Ding studied the vanishing viscosity-resistivity limit and the convergence rate for the viscous, resistive, free-boundary MHD system with surface tension in a half-space domain. Wang-Xin \cite{wangxinMHDST1} studied the global well-posedness of the incompressible resistive MHD system with surface tension near the equilibrium solution (constant strength magnetic field). However, to the best of our knowledge, \textit{NO result} that concerns the well-posedness theory of inviscid and non-resistive free-boundary MHD equations when $\sigma>0$ is available.  
\bigskip

\noindent\textbf{Our results}

The goal of this manuscript is to establish a priori energy estimates  for \eqref{MHD}-\eqref{MHDB} with fixed $\sigma>0$ when $u_0,B_0\in H^{3.5}(\Omega(0))$. Our result is an important first step to prove the local well-posedness for free-boundary MHD equations with surface tension, since the real conducting fluids have surface tension while the case without surface tension is just an idealized model. Moreover, we will show that the surface tension, in fact, has a stronger regularizing effect compare to that provided by the physical sign condition \eqref{Tsign}: We are able to get a better control of the normal component of the velocity field on the moving boundary through the boundary elliptic estimate due to the appearance of surface tension. As a result, our energy constructed in Chapter \ref{tang} contains at least two time derivatives, which removes the requirement of the flow map has to be $1/2$-derivatives more regular than $v$, $b$ in the case of no surface tension. We will give more illustration on this in Section \ref{section 1.3}.


\subsection{MHD system in Lagrangian coordinates and the main result}
We reformulate the MHD equations in Lagrangian coordinates, in which  the free domain becomes fixed. Let $\Omega$ be a bounded domain in $\R^3$. Denoting coordinates on $\Omega$ by $y=(y_1,y_2,y_3)$, we define $\eta:[0,T]\times \Omega\to\DD$ to be the flow map of the velocity $u$, i.e., 
\begin{equation}
\p_t \eta (t,y)=u(t,\eta(t,y)),\q
\eta(0,y)=y.
\end{equation}
We introduce the Lagrangian velocity, magnetic field and fluid pressure, respectively, by
\begin{equation}
v(t,y)=u(t,\eta(t,y)),\q
b(t,y)=B(t,\eta(t,y)),\q
q(t,y)=p(t,\eta(t,y)).
\end{equation}
Let $\p$ be the spatial derivative with respect to $y$ variable. We introduce the cofactor matrix $a=[\p\eta]^{-1}$, which is well-defined since $\eta(t,\cdot)$ is almost the identity map when $t$ is sufficiently small. It's worth noting that $a$ verifies the Piola's identity, i.e., 
\begin{equation}
\p_\mu a^{\mu\alpha} = 0.
\label{piola}
\end{equation}
Here, the Einstein summation convention is used for repeated upper and lower indices. In above and throughout, all Greek indices range over 1, 2, 3, and the Latin indices range over 1, 2. 

Denote the total pressure $P=p+\frac{1}{2}|B|^2$ and let $Q=P(t,\eta(t,y))$. Then \eqref{MHD}-\eqref{MHDB} can be reformulated as:
\begin{equation}
\begin{cases}
\partial_tv_{\alpha}-b_{\beta}a^{\mu\beta}\partial_{\mu}b_{\alpha}+a^{\mu}_{\alpha}\partial_{\mu}Q=0~~~& \text{in}~[0,T]\times\Omega;\\
\partial_t b_{\alpha}-b_{\beta}a^{\mu\beta}\partial_{\mu}v_{\alpha}=0~~~&\text{in}~[0,T]\times \Omega ;\\
a^{\mu\alpha}\partial_{\mu}v_{\alpha}=0,~~a^{\mu\alpha}\partial_{\mu}b_{\alpha}=0~~~&\text{in}~[0,T]\times\Omega;\\
v_3=0,~b_3=0~~~&\text{on}~\Gamma_0;\\
a^{\mu\alpha}N_{\mu}Q+\sigma(\sqrt{g}\Delta_g \eta^{\alpha})=0 ~~~&\text{on}~\Gamma;\\
a^{\mu\nu}b_{\nu}N_{\mu}=0 ~~~&\text{on}~\Gamma,\\
\end{cases}\label{MHDL}
\end{equation}
where $N$ is the unit outer normal vector to $\p\Omega$, $a^T$ is the transpose of $a$, $|\cdot|$ is the Euclidean norm and $\Delta_g$ is the Laplacian of the metric $g_{ij}$ induced on $\p\Omega(t)$ by the embedding $\eta$. Specifically, we have:
\begin{equation}\label{gij}
g_{ij}=\p_i\eta^{\mu}\p_j\eta_{\mu},~\Delta_g(\cdot)=\frac{1}{\sqrt{g}}\p_i(\sqrt{g}g^{ij}\p_j(\cdot)),\text{ where } g:=\det (g_{ij}).
\end{equation} For the details to derive the fifth equation of \eqref{MHDL} (the surface tension equation), we refer to Lemma 2.5 in \cite{disconzi2017prioriI} for readers.

For the sake of simplicity and clean notation, here we consider the model case when 
\begin{equation}
\Omega=\T^2\times (0,1),
\label{Omega}
\end{equation}
 where $ \partial\Omega=\Gamma_0\cup\Gamma$ and $\Gamma=\T^2\times \{1\}$ is the top (moving) boundary, $\Gamma_0=\T^2\times\{0\}$ is the fixed bottom. Using a partition of unity, e.g., \cite{DKT}, a general domain can also be treated with the same tools we shall present. However, choosing $\Omega$ as above allows us to focus on the real issues of the problem without being distracted by the cumbersomeness of the partition of unity. Let $N$ stands for the outward unit normal of $\p\Omega$. In particular, we have $N=(0,0,-1)$ on $\Gamma_0$ and $N=(0,0,1)$ on $\Gamma$. 

In this paper, we prove:

\begin{thm}\label{MHDthm}
Let $\Omega$ be defined as in \eqref{Omega}. Assume that $v_0\in H^{3.5}(\Omega)\cap H^4(\Gamma)$ and $b_0\in H^{3.5}(\Omega)$ be divergence free vector fields with $b_0\cdot N=0$ on $\p\Omega$. Assume $(\eta, v, b, Q)$ to be any solution of \eqref{MHDL} with initial data $v_0$ and $b_0$. Define
\begin{equation}\label{N(t)}
\begin{aligned}
N(t)=\|\eta\|_{3.5}^2&+\|v\|_{3.5}^2+\|v_t\|_{2.5}^2+\|v_{tt}\|_{1.5}^2+\|v_{ttt}\|_{0}^2+\|b\|_{3.5}^2+\|b_t\|_{2.5}^2+\|b_{tt}\|_{1.5}^2+\|b_{ttt}\|_{0}^2\\
&+\|Q\|_{3.5}^2+\|Q_t\|_{2.5}^2+\|Q_{tt}\|_1^2.
\end{aligned}
\end{equation}
Then there exists a $T>0$, chosen sufficiently small, such that $N(t)\leq C_0$ for all $t\in [0,T]$, where $C_0$ only depends on $\|v_0\|_{3.5},\|b_0\|_{3.5}, \|v_0\|_{4,\Gamma}$. Here we denote $\|f\|_{s}:= \|f(t,\cdot)\|_{H^s(\Omega)}$ for any function $f(t,y)\text{ on }[0,T]\times\Omega$, and $\|f\|_{s,\Gamma}:= \|f(t,\cdot)\|_{H^s(\Gamma)}$ for any $f(t,y)\text{ on }[0,T]\times\Gamma$.
\end{thm}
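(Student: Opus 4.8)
\noindent\emph{Strategy.} The plan is to close a Gronwall-type inequality for $N(t)$, i.e.\ to show $\frac{d}{dt}N(t)\lesssim P(N(t))$ for a polynomial $P$, and then to bound the initial value $N(0)$ by the data norms. The estimate for $N(t)$ is assembled from three separate pieces: (i) elliptic/div-curl estimates that recover the full Sobolev norms of $v$, $b$, $\eta$, $Q$ from their tangential derivatives, divergences, curls and boundary traces; (ii) evolution (energy) estimates for the tangentially-differentiated and time-differentiated equations, where the surface tension term is the source of the coercive boundary contribution; and (iii) an elliptic estimate for $Q$ coming from the equation it satisfies (obtained by applying $a^{\mu\alpha}\partial_\mu$ to the first equation of \eqref{MHDL} and using the constraints), together with its time-differentiated versions to control $Q_t$, $Q_{tt}$.

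\noindent\emph{Main steps, in order.}
\textbf{(1) Preliminaries and smallness of $t$.} Fix a small $T$ so that $\|\eta(t,\cdot)-\idd\|$ is small in a high enough norm; this guarantees that $a=[\partial\eta]^{-1}$ is well-defined, that $\eta(t,\cdot)$ is a diffeomorphism, that $\sqrt g$ and $g^{ij}$ are comparable to their values at $t=0$, and that the induced metric and the Dirichlet form $\int_\Gamma \sqrt g\, g^{ij}\,\partial_i(\cdot)\partial_j(\cdot)$ are coercive. All estimates below are understood to hold on $[0,T]$ with constants depending only on the data.
\textbf{(2) Div-curl decomposition.} Using $\di v=0$, $\di b=0$ in Eulerian variables (equivalently $a^{\mu\alpha}\partial_\mu v_\alpha=0$ etc.), write $\|v\|_{3.5}^2\lesssim \|\di_a v\|_{2.5}^2+\|\curl_a v\|_{2.5}^2+\|v\cdot N\|_{3,\Gamma}^2+\text{l.o.t.}$, and similarly for $b$ and $\eta$. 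The curl of $v$ and $b$ are propagated by the second MHD equation; the boundary term $\|v\cdot N\|_{3,\Gamma}$ — equivalently $\|\partial_t\eta\cdot N\|_{3,\Gamma}$ — is exactly where surface tension buys us the extra control, so this term must be fed from the boundary energy rather than the interior.
\textbf{(3) Tangential energy estimate with surface tension.} Apply $\cp{}^{\,4}$ (four tangential derivatives, or the appropriate $\partial^{3.5}$ fractional version) to the momentum equation, pair with $\cp{}^{\,4} v$, integrate over $\Omega$, integrate by parts moving $a^{\mu\alpha}\partial_\mu Q$ to the boundary, and use the surface-tension boundary condition $a^{\mu\alpha}N_\mu Q=-\sigma\sqrt g\,\Delta_g\eta^\alpha$. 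This produces, after a further integration by parts on $\Gamma$, a term $\frac{\sigma}{2}\frac{d}{dt}\int_\Gamma \sqrt g\, g^{ij}\,\cp{}^{\,4}\partial_i\eta\cdot\cp{}^{\,4}\partial_j\eta$ plus commutators; this is the positive boundary energy controlling $\|\eta\|_{4.5,\Gamma}$-type quantities (hence $\|v\cdot N\|_{3,\Gamma}$ after using the equation), together with the interior $\frac12\frac{d}{dt}\|\cp{}^{\,4}v\|_0^2$ and the magnetic term, which is handled symmetrically by pairing $\cp{}^{\,4}$ of the induction equation with $\cp{}^{\,4}b$ — here the cancellation $\int b\cdot(B\cdot\nabla B)=\int u\cdot(B\cdot\nabla B)$ seen in the energy identity in the introduction reappears at the differentiated level. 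The boundary condition $a^{\mu\nu}b_\nu N_\mu=0$ is used to kill the magnetic boundary term. Repeat the same scheme for $\partial_t^k$ applied to the equations, $k=1,2,3$, to control $v_t,v_{tt},v_{ttt}$ and $b_t,b_{tt},b_{ttt}$, each time using the time-differentiated surface tension condition; the time-differentiated boundary energies control the $H^{s,\Gamma}$-norms of $\partial_t^k\eta$.
\textbf{(4) Pressure estimates.} Derive the elliptic problem for $Q$: $\Delta_a Q = -\partial_\mu a^{\mu\alpha}\partial_t v_\alpha + \partial_\mu(b_\beta a^{\mu\beta}a^{\nu\alpha}\partial_\nu b_\alpha)+\dots$ in $\Omega$, with Neumann data on $\Gamma_0$ from $v_3=0$ and Dirichlet data on $\Gamma$ from the surface tension condition $Q=-\sigma\sqrt g\,\Delta_g\eta^\alpha(a^{-1}N)_\alpha$ (schematically $Q = \sigma\mathcal H$ pulled back). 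Standard elliptic estimates give $\|Q\|_{3.5}\lesssim P(N(t))$; differentiating in time and repeating yields $\|Q_t\|_{2.5}$ and $\|Q_{tt}\|_1$. These feed back into Step (3).
\textbf{(5) Assemble and Gronwall.} Combining (2)--(4), every term in $N(t)$ is bounded by $N(0)$ plus $\int_0^t P(N(s))\,ds$ plus (from the boundary energies) terms of the form $\varepsilon N(t)$ that are absorbed for $T$ small; a continuity/bootstrap argument then gives $N(t)\le C_0$ on $[0,T]$. Finally bound $N(0)$: the elliptic estimates express $\|Q(0)\|$, the evolution equations express the time derivatives $\partial_t^k v(0),\partial_t^k b(0)$ in terms of spatial derivatives of $v_0,b_0,\eta(0)=\idd,Q(0)$, so $N(0)\le C(\|v_0\|_{3.5},\|b_0\|_{3.5},\|v_0\|_{4,\Gamma})$, as claimed.

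\noindent\emph{Main obstacle.} The crux is Step (3): showing that the boundary term generated by integrating by parts the pressure gradient, after inserting the surface tension condition, is a \emph{positive-definite} time derivative of a coercive quadratic form in the top tangential derivatives of $\eta$, with all remaining commutators (from $\cp{}^{\,4}$ hitting $a$, $\sqrt g$, $g^{ij}$, and from the Laplace–Beltrami operator) controlled by $P(N(t))$ without losing derivatives. Equally delicate is ensuring that the normal component $v\cdot N$ on $\Gamma$, and its time derivatives, are controlled purely from this boundary energy (and the kinematic boundary condition $\partial_t\eta=v$), since this is precisely the regularity that is \emph{not} available from the interior div-curl estimate — this is the mechanism by which surface tension removes the $1/2$-extra-regularity requirement on the flow map. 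The magnetic field introduces the extra subtlety that $b\cdot N=0$ is only preserved along the flow (it is not an independent evolution), so care is needed when the magnetic boundary term in the differentiated energy identity is eliminated; one uses $a^{\mu\nu}b_\nu N_\mu=0$ together with the induction equation restricted to $\Gamma$.
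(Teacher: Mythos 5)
Your skeleton (div--curl decomposition, tangential/time-differentiated energies driven by the surface-tension boundary term, elliptic pressure estimates, Gronwall, and estimates at $t=0$) is the same family of argument as the paper's, but your Step (3) has a genuine derivative-loss gap at the stated regularity, and it is precisely the point the paper's construction is designed to avoid. You propose applying $\cp^{4}$ (or a fractional $\cp^{3.5}$) to the momentum equation and pairing with the top tangential derivative of $v$, so that the surface-tension condition yields a coercive boundary energy controlling $\|\eta\|_{4.5,\Gamma}$-type quantities and hence $\|v\cdot N\|_{3,\Gamma}$. However, the interior commutator generated by the pressure term --- schematically $\int_\Omega \cp^{3.5}\bigl(a^{\mu\alpha}Q\bigr)\,\p_\mu\cp^{3.5}v_\alpha$, whose worst contribution behaves like $a\,(\cp^{3.5}\p\eta)\,a\,Q\,\p\cp^{3.5}v$ --- requires control of $3.5$ tangential derivatives of $\p\eta$ \emph{in the interior}, i.e. a quantity at the level of $\|\eta\|_{4.5}$, which is not in $N(t)$ (only $\|\eta\|_{3.5}$ is). Surface tension supplies that regularity only on $\Gamma$, not in $\Omega$; without Alinhac's good unknowns or the $1/2$-extra regularity of the flow map this step loses a derivative, and your proposal offers no substitute. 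The paper avoids it by taking as top-order energies only $\p_t^3$ and $\TP\p_t^2$ of the equations (producing the coercive boundary quantities $\|\TP(\Pi v_{tt})\|_{0,\Gamma}$ and $\|\TP^2(\Pi v_t)\|_{0,\Gamma}$), and then recovering the spatial boundary regularity $\|v^3\|_{3,\Gamma}$ not from a $\cp^{3.5}$ energy but from a boundary elliptic estimate for $v^3$ obtained by time-differentiating the surface-tension equation (so the forcing is $Q_t$, already bounded by the pressure estimates), together with the comparison of $X^3$ with $\Pi X$ and the bound $\|\eta\|_{4,\Gamma}\le P(\|Q\|_{2,\Gamma})$; the interior norm $\|v\|_{3.5}$ then follows from div--curl. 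Also, at the $\p_t^3$ level one cannot simply ``use the time-differentiated surface tension condition'' and estimate directly: $Q_{ttt}$ is not controlled, so the boundary term must be integrated by parts and combined with the specific cancellations ($I_{14}$ against $I_4$, the determinant structure, etc.), none of which your outline accounts for.

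Two further concrete points. First, ``standard elliptic estimates, differentiating in time and repeating'' do not yield $\|Q_{tt}\|_1$: the standard bound $\|u\|_s\lesssim\|f\|_{s-2}+\|g\|_{s-1.5,\p\Omega}+\|u\|_0$ requires $s\ge 2$, and the paper instead rewrites the $Q_{tt}$-problem in divergence form and invokes a low-regularity $H^1$ elliptic estimate plus a trace lemma in $H^{-1/2}$ for $L^2$ vector fields with $L^2$ divergence. Second, the div--curl inequality for $b$, $b_t$, $b_{tt}$ needs the boundary traces $\|b\cdot N\|_{s-1/2,\Gamma}$ and their time-differentiated analogues, which do not vanish in Lagrangian coordinates; the paper controls them through the identity $b=(b_0\cdot\p)\eta$ with $b_0\cdot N=0$ on $\Gamma$, so that $b=(b_0\cdot\TP)\eta$ there and the boundary regularity of $\eta$ applies --- your proposal does not address these terms, although your use of the divergence-free structure of $b_0$ to cancel the top-order magnetic contributions in the energy identity is indeed the same cancellation the paper exploits.
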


\subsection{Strategy and organisation of the paper}\label{section 1.3}

\paragraph*{Notations.} All definitions and notations will be defined as they are introduced. In addition, a list of symbols will be given at the end of this section for a quick reference.
\defn The $L^2$-  based Sobolev spaces are denoted by $H^s(\Omega)$, where
we abbreviate corresponding norm $\|\cdot\|_{H^r(\Omega)}$ as $\|\cdot\|_{r}$ when no confusion can arise. We denote by $H^s(\Gamma)$ the Sobolev space of functions defined on the boundary $\Gamma$ ($\Gamma$=$\Gamma_0$ or $\Gamma$), with norm $\|\cdot\|_{s,\Gamma}$. 

\nota We use $\epsilon$ to denote a small positive constant which may vary from expression to expression. Typically, $\epsilon$ comes from choosing sufficiently small time, from Lemma \ref{estimatesofa} and from the Young's inequality. 
\nota We use $P=P(\cdots)$ to denote a generic polynomial in its arguments.\\

\noindent\textbf{Gronwall-Type argument and div-curl estimates}

To derive the a priori estimates in Theorem \ref{MHDthm}, we need to do the div-curl-boundary decomposition for $v$, $b$ and their time derivatives and finally need a Gronwall-type control
\[
N(t)\lesssim P(N(0))+P(N(t))\int_0^t P(N(s))ds
\] holds in some time interval $[0,T]$. This implies $N(t)\lesssim C(N(0))$ for some constant only depending on $N(0)$, i.e., the initial data.

The divergence control is easy thanks to the Eulerian divergence-free condition for $v$ and $b$. The vorticity control will be derived from the evolution equations of the Eulerian vorticity of $v$ and $b$, as shown in \cite{luozhang2019MHD2.5}. Its computation requires the repeated use of Kato-Ponce inequalities in Lemma \ref{KatoPonce}. 

The boundary terms of $v$ and $b$ are treated in different ways: We can control the normal component of $v$ and its time derivates on the boundary thanks to the boundary elliptic estimates and the comparison between the normal component $X^3$ and that of the tangential projection $\Pi X$ as shown in Section \ref{bdryofv}. For the boundary control of $b$, we invoke the identity $b=(b_0\cdot\p)\eta$ (see Lemma \ref{GW}) and the condition $b_0\cdot N=0$ on the boundary to see that $b=(b_0\cdot \TP)\eta$ on the boundary $\Gamma$. Then applying again the boundary elliptic estimates gives the control of $\TP\eta$. The time derivative counterparts can be controlled in the same way.

\bigskip

\noindent\textbf{Boundary estimates of the velocity}

From above, we need to control $v^3,v_t^3$ and $v_{tt}^3$. One can differentiate the surface tension equation in time variable to derive an ellptic equation for $v$ 
\begin{equation}
\Delta_g v^3=-\frac{1}{\sigma}a^{\mu 3}N_{\mu}Q_t+\cdots \label{elliptic rough}
\end{equation}
and then apply the elliptic estimates to control $v$. However this is not valid for higher order time derivative since we do not have enough regularity for $Q_{tt}$ or $Q_{ttt}$. To solve this problem, we use the method in \cite{disconzi2017prioriI}: Let $X$ be a vector field. Then one can compare the $X\cdot N=X^3$ with the normal projection $\Pi X$ as in Lemma \ref{tgproj}. Specifically, this is based on a simple fact that 
\[
X^3-(\Pi X)^3=g^{kl}\p_k\eta^3\p_l\eta_{\lambda}X^{\lambda},
\]
where the error term on the RHS can be controlled in a routine fashion. 

The interior estimates of $\TP v_{tt}$ and $\TP^2 v_t$, together with that of $v_{ttt}$ and $b_{ttt}$, are derived in the tangential energy estimates. To see this for $\TP v_{tt}$, one can first compute $\|v_{ttt}\|_0^2+\|b_{ttt}\|_0^2$, where all the terms containing $b$ with highest order actually vanish due to remarkable cancellation as shown in \eqref{tgtttcancel} and \eqref{tgvttcancel}. The main term is a boundary integral containing full derivative of $Q$ (i.e. $Q_{ttt}$) after integration by parts
\[
-\int_0^t\int_{\Gamma} a^{\mu\alpha}Q_{ttt}\p_t^3 v_{\alpha} N_{\mu}=-\int_0^t\int_{\Gamma} \p_t^3(\underbrace{a^{\mu\alpha}Q N_{\mu}}_{=-\sigma\sqrt{g}\Delta_g \eta^{\alpha}})\p_t^3 v_{\alpha} +\cdots
\]

Invoking the identities
\[
\p_t(\sqrt{g}\Delta_g \eta^{\nu})=\p_i\left(\sqrt{g}g^{ij}(\delta^{\alpha}_{\lambda}-g^{kl}\p_k\eta^{\alpha}\p_l\eta^{\lambda})\p_j v^{\lambda})+\sqrt{g}(g^{ij}g^{kl}-g^{lj}g^{ik})\p_j\eta^{\alpha}\p_k\eta^{\lambda}\p_l v^{\lambda}\right)
\] 
$$\Pi_{\lambda}^{\alpha}=\delta_{\lambda}^{\alpha}-g^{kl}\p_k\eta^{\alpha}\p_l\eta_{\lambda},$$ 
 we can get a coercive term
\[
-\frac{1}{2}\int_{\Gamma}\sqrt{g}g^{ij}\p_i(\Pi^{\alpha}_{\mu}\p_t^2v_{\alpha})\p_j(\Pi^{\mu}_{\lambda}\p_t^2v^{\lambda})
\] after integrating by parts. This term is almost equal to $-\frac{1}{2}\|\TP v_{tt}\|_{0,\Gamma}^2$ since $\sqrt{g}g^{ij}\sim \delta^{ij}$ within short time interval. Analogous computation also holds for $\TP^2v_t$. This concludes the boundary estimates for the velocity. 
\bigskip

\noindent\textbf{Surface tension stabilizes the flow}

As stated in Section \ref{background}, the physical sign condition \eqref{Tsign} is \textit{insufficient to regularize} the motion of free-boundary conducting liquid in low regularity Sobolev spaces (i.e., whenever $\p^2\eta\notin L^\infty$) and extra regularity assumptions are required (e.g., the smallness of the fluid volume). In this manuscript, we show that the presence of the surface tension provides stronger regularizing effect for free-boundary MHD equations. This is due to that one can time-differentiate the boundary condition to derive an elliptic equation for $v$ \eqref{elliptic rough},  which allows us to control the normal component of $v$ on the moving boundary via elliptic estimates. As a consequence, this helps us avoid controlling the full spatial derivatives of $v$ and thus the extra regularity assumptions on $\eta$ is no longer needed. 
\bigskip 

\noindent\textbf{Illustration on the regularity requirement of the flow map}

To understand how the requirement of the regularity of the flow map $\eta$ appears, one first need to realize a crucial difference between Euler's equations and MHD equations: There is NO analogue of ``\textit{irrotationality assumption}" for a conducting fluid due to the presence of the Lorentzian force term $B\cdot \nab B$. Physically, this is due to that the Lorentzian force twists the trajectory of an electric particle in a magnetic field and produces vorticity even if the initial data is curl-free. Mathematically, as shown in our previous work \cite{luozhang2019MHD2.5}, the well-known Cauchy invariance fails. The Cauchy invariance, however, is required to control the flow map if it is $1/2$-derivative more regular than the velocity, and so one has to introduce the smallness assumption on the fluid domain to compensate the failure of the Cauchy invariance in the case of no surface tension. 

We mention here that in \cite{gu2016construction}, the authors adapted Alinhac's good unknowns to remove the extra regularity on the flow map $\eta$ in the case of no surface tension. However, this requires the initial data to be in $H^4$.

\bigskip

\noindent\textbf{Effects brought by the surface tension}
  
As stated above, the surface tension helps us to avoid controlling the higher regularity of $\eta$ owing to the boundary elliptic estimates. It is natural to ask if the surface tension makes any negative effect on controlling other quantities.
We point out that the contribution of the surface tension here is the stronger boundary control of $v$, which requires higher regularity of $\eta$ in the case of no surface tension as shown in our previous work \cite{luozhang2019MHD2.5}. On the other hand, the side effects on the control of the magnetic field caused by the surface tension are mainly technical difficulties, as shown in Chapter \ref{tang}. The strong coupling between the velocity and magnetic fields does not worsen those technical terms. Therefore, one can see the surface tension contributes mainly in the kinetic part, while the impact on controlling other quantities can be controlled.

\bigskip

\noindent\textbf{Elliptic Estimates of Pressure}

Our computation above produces some term like $\epsilon\|Q_{tt}\|_1^2$ after using Young's inequality. Therefore we need to do the pressure estimates, which can be derived from the elliptic equations of the pressure. $Q$ and $Q_t$ can be straightforward controlled by using the standard elliptic estimates $\|u\|_{s}\lesssim \|f\|_{s-2}+\|g\|_{s-1.5,\p\Omega}+\|u\|_{0}(\forall s\geq 2)$. However for the $H^1$-control of $Q_{tt}$, we need a low regularity estimate as in Lemma \ref{H1elliptic} proved in \cite{ignatova2016local}, and the trace lemma with negative Sobolev index in Lemma \ref{divtrace}. Finally, one needs to re-write these estimates in terms of the sum of initial data and time integral of the quantities in $N(t)$ to finish the Gronwall-type control of $N(t)$ as above.

\paragraph*{List of symbols:}
\begin{itemize}
\item $\epsilon$:  A small positive constant which may vary from expression to expression.
\item $a=[\p \eta]^{-1}$: The cofactor matrix;
\item $\|\cdot\|_{s}$: We denote $\|f\|_{s}:= \|f(t,\cdot)\|_{H^s(\Omega)}$ for any function $f(t,y)\text{ on }[0,T]\times\Omega$.
\item $\|\cdot\|_{s,\Gamma}$: We denote $\|f\|_{s,\Gamma}:= \|f(t,\cdot)\|_{H^s(\Gamma)}$ for any function $f(t,y)\text{ on }[0,T]\times\Gamma$.
\item $P(\cdots)$: A generic polynomial in its arguments;
\item $\PP$: $\PP=P(\|v\|_{3.5},\|v_t\|_{2.5},\|v_{tt}\|_{1.5},\|v_{ttt}\|_{0},\|b\|_{3.5},\|b_t\|_{2.5},\|b_{tt}\|_{1.5},\|b_{ttt}\|_{0})$;
\item $N(t)$: 
\[
\begin{aligned}
N(t)=\|\eta\|_{3.5}^2&+\|v\|_{3.5}^2+\|v_t\|_{2.5}^2+\|v_{tt}\|_{1.5}^2+\|v_{ttt}\|_{0}^2+\|b\|_{3.5}^2+\|b_t\|_{2.5}^2+\|b_{tt}\|_{1.5}^2+\|b_{ttt}\|_{0}^2\\
&+\|Q\|_{3.5}^2+\|Q_t\|_{2.5}^2+\|Q_{tt}\|_1^2;
\end{aligned}
\]
\item $\TP=\p_1,\p_2$: Tangential differential operators. 
\end{itemize}

\section{Preliminary Lemmas}
\label{section 2}

The first lemma records some basic estimates of the cofactor matrix $a$, which shall be used throughout the rest of the manuscript. 

\begin{lem}\label{estimatesofa}
Suppose $\| v\|_{L^{\infty}([0,T];H^{3.5}(\Omega))}\leq M$. If $T\leq \frac{1}{CM}$ for a sufficiently large constant $C$, then the following estimates hold:

(1) $\|\eta\|_{3.5}\leq C$ for $t\in [0,T]$;

(2) $\det(\p\eta(t,x))=1$ for $(x,t)\in\Omega\times [0,T]$;

(3) $\|a(\cdot,t)\|_{H^{2.5}}\leq C$ for $t\in [0,T]$;

(4) $\|a_t(\cdot,t)\|_{r}\leq C\|\p v\|_{r}$ for $t\in [0,T]$, $0\leq r\leq 2.5$;

(4)' $\|a_t(\cdot,t)\|_{L^p}\leq C\|\p v\|_{L^p}$ for $t\in [0,T]$, $1\leq p\leq\infty$;

(5) $\|a_{tt}(\cdot,t)\|_{r}\leq C\|v\|_{2.5+\delta}\|\p v\|_{r}+C\|\p v_t\|_{r}$, for $t\in [0,T]$, $0< r\leq 1.5$;

(6) $\|a_{ttt}(\cdot,t)\|_{r}\leq C\|\p v\|_{r}\|v\|_{2.5+\delta}^2+C\|\p v_t\|_{r}\|v\|_{2.5+\delta}+C\|\p v_{tt}\|_{r}$, for $t\in [0,T]$, $0< r\leq 0.5$;

(6)' $\|a_{ttt}(\cdot,t)\|_{L^p}\leq C\|\p v\|_{L^p}\|v\|_{2.5+\delta}^2+C\|\p v_t\|_{L^p}\|v\|_{2.5+\delta}+C\|\p v_{tt}\|_{L^p}$, for $t\in [0,T]$, $1\leq p\leq \infty$;

(7) For every $0<\epsilon\leq 1$, there exists a constant $C>0$ such that for all $0\leq t\leq T':=\min\{\frac{\epsilon}{CM},T\}>0$, we have $$\|a^{\mu}_{\nu}-\delta^{\mu}_{\nu}\|_{2.5}\leq\epsilon, ~~\|a^{\mu\alpha}a^{\nu}_{\alpha}-\delta^{\mu\nu}\|_{2.5}\leq \epsilon.$$ In particular $a^{\mu}_{\alpha}a^{\nu}_{\alpha}$ satisfies the ellpticity condition $$a^{\mu\alpha}a^{\nu}_{\alpha}\xi_{\mu}\xi_{\nu}\geq \frac{1}{C}|\xi|^2~~~~\forall \xi\in \mathbb{R}^3;$$

(8) $D a_{\alpha}^\mu=-a^{\mu}_{\nu}\p_{\beta}D \eta^\nu a^{\beta}_{\alpha}$, for $D=\p,\p_t.$
\end{lem}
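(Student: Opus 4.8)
The plan is to exploit the near-identity structure of the flow map on a short time interval and bootstrap all the estimates from the two fundamental facts $\eta(t,y)=y+\int_0^t v(s,y)\,ds$ and $\p_t a = -a(\p v)a$ (the $D=\p_t$ case of (8)). First I would prove (1): from $\p\eta = I + \int_0^t \p v\,ds$ and the product/algebra properties of $H^{2.5}(\Omega)$ (recall $H^{2.5}$ embeds in $L^\infty$ in three dimensions), the time integral has $H^{2.5}$-norm bounded by $TM\le 1/C$, so $\|\p\eta - I\|_{2.5}$ is as small as we like and $\|\eta\|_{3.5}\le \|y\|_{3.5} + TM \le C$. Statement (2) is the classical fact that $\det\p\eta$ is constant in time along a divergence-free flow: differentiating $\det\p\eta$ in $t$ and using $\p_t\p_\beta\eta^\mu = \p_\beta v^\mu$ together with the cofactor expansion and $a^{\mu\alpha}\p_\mu v_\alpha = 0$ gives $\p_t\det\p\eta = (\det\p\eta)\, a^\beta_\mu \p_\beta v^\mu = 0$, and at $t=0$ it equals $1$. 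Once $\det\p\eta\equiv 1$, the cofactor matrix $a=[\p\eta]^{-1}$ equals the classical adjugate, so its entries are quadratic polynomials in the entries of $\p\eta$; then (3) follows from (1) and the algebra property of $H^{2.5}$, and (7) follows similarly by writing $a^\mu_\nu - \delta^\mu_\nu$ as a polynomial in $\p\eta - I$ with no constant term and choosing the time small enough that each factor is $\le\epsilon$ in $H^{2.5}$ — the ellipticity bound is then immediate from the matrix being a small perturbation of the identity.

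Next I would establish (8) for $D=\p$ and $D=\p_t$ simultaneously, since both are the same computation: differentiating the identity $a^\mu_\nu \p_\beta\eta^\nu = \delta^\mu_\beta$ (i.e. $a\,\p\eta = I$) by $D$ gives $Da^\mu_\nu\,\p_\beta\eta^\nu + a^\mu_\nu\, D\p_\beta\eta^\nu = 0$, and contracting with $a^\beta_\alpha$ yields $Da^\mu_\alpha = -a^\mu_\nu(\p_\beta D\eta^\nu)a^\beta_\alpha$; for $D=\p_t$ this is $a_t = -a(\p v)a$, and for $D=\p$ it is the commutator formula stated. With (8) in hand, the time-derivative estimates (4)--(6)' are all consequences of differentiating $a_t = -a(\p v)a$ further in time and in space, and then using Lemma-type product estimates in $H^r$ (Kato--Ponce / the algebra property for $r>3/2$, and the interpolation $\|fg\|_r \lesssim \|f\|_{H^{2.5+\delta}}\|g\|_r$ for $0\le r\le 2.5$, which is where the $2.5+\delta$ loss in (5) and (6) comes from). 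Concretely: (4) is $\|a_t\|_r \le \|a\|_{?}^2\|\p v\|_r \le C\|\p v\|_r$ using $\|a\|_{2.5}\le C$ from (3); (4)' is the same with the $L^p\cdot L^\infty$ Hölder/Sobolev bound on $a$ instead; for (5) one writes $a_{tt} = -a_t(\p v)a - a(\p v_t)a - a(\p v)a_t$ and estimates the first and third terms using (4) and $\|a_t\|_{H^{2.5+\delta}}\lesssim \|v\|_{2.5+\delta}$-type control, giving the $\|v\|_{2.5+\delta}\|\p v\|_r$ term, and the middle term gives $\|\p v_t\|_r$; (6) and (6)' are the next time derivative, producing the three-term structure with the indicated powers of $\|v\|_{2.5+\delta}$, again with the $L^p$ variant replacing Sobolev multiplication by Hölder.

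The main obstacle — really the only place requiring care rather than bookkeeping — is tracking the Sobolev indices so that every product estimate invoked is actually valid: the highest-regularity factor must always sit in a space that is an algebra or at least multiplies $H^r$ into $H^r$, which forces the pattern "lose to $\|v\|_{2.5+\delta}$, keep the top derivative on one factor in $H^r$", and this is exactly why $r$ is capped at $2.5$ in (4), at $1.5$ in (5), and at $0.5$ in (6): each extra time derivative consumes one unit of the available regularity budget $3.5$ through the chain $v \rightsquigarrow v_t \rightsquigarrow v_{tt}$. One should also note that (5) and (6) are stated for $r>0$ strictly, since the borderline product estimate at $r=0$ would need $L^\infty$ control that is instead handled separately by the primed versions (4)', (6)'. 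Everything else is a routine application of the algebra property of $H^{2.5}(\Omega)$, Hölder's inequality, and the short-time smallness coming from $T\le 1/(CM)$.
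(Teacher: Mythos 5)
Your proposal is correct and follows essentially the same route as the paper, which simply cites Lemma 3.1 of \cite{ignatova2016local} for (1)--(7) and obtains (8), exactly as you do, by differentiating the identity $a\,\p\eta=I$ and contracting with $a$; the computation behind the cited lemma is the same bootstrap you describe, from $\eta=\mathrm{id}+\int_0^t v$, $\det\p\eta\equiv 1$, and $a_t=-a(\p v)a$ together with Sobolev product estimates. (One cosmetic slip: in your step for (5) the intermediate bound should be $\|a_t\|_{H^{1.5+\delta}}\lesssim\|v\|_{2.5+\delta}$, which suffices since $r\le 1.5$, rather than $\|a_t\|_{H^{2.5+\delta}}$, which would require $\|v\|_{3.5+\delta}$; the estimate you conclude is unaffected.)
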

\begin{proof}
(1)-(7) is Lemma 3.1 in that paper. (8) is derived from differentiating the identity $a=a:\p\eta :a$. We refer \cite{ignatova2016local} for the details. 
\end{proof}

The next lemma is to introduce a low regularity elliptic estimates, and we refer Lemma 3.2 in \cite{ignatova2016local} for the proof. It will be used to control $\|Q_{tt}\|_{1}$.
\begin{lem}\label{H1elliptic}
Assume $A^{\mu\nu}$ satisfies $\|A\|_{L^{\infty}}\leq K$ and the ellipticity $A^{\mu\nu}(x)\xi_{\mu}\xi_{\nu}\geq \frac{1}{K}|\xi|^2$ for all $x\in\Omega$ and $\xi\in\mathbb{R}^3$. Assume $W$ to be an $H^1$ solution to 
\begin{equation}\label{lowelliptic}
\begin{cases}
\p_{\nu}(A^{\mu\nu}\p_{\mu}W)=\dive \pi &~in~\Omega \\
A^{\mu\nu}\p_{\nu}WN_{\mu}=h &~on~\p\Omega,
\end{cases}
\end{equation} where $\pi,\dive \pi \in L^2(\Omega)$ and $h\in H^{-0.5}(\p\Omega)$ with the compatibility condition $$\int_{\p\Omega}(\pi\cdot N-h)dS=0.$$ If $\|A-I\|_{L^{\infty}}\leq\epsilon_0$ which is a sufficently small constant depending on $K$, then we have:
\begin{equation}
\|W-\overline{W}\|_{1}\lesssim\|\pi\|_{0}+\|h-\pi\cdot N\|_{-0.5,\p\Omega},\text{ where }\overline{W}:=\frac{1}{|\Omega|}\int_{\Omega} W dy,
\end{equation}
and 
\begin{equation}
\|W\|_{1}\lesssim\|\pi\|_{0}+\|h-\pi\cdot N\|_{-0.5,\p\Omega}+\|W\|_{0,\Gamma}.
\end{equation}
\end{lem}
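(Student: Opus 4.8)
The plan is to run the standard variational energy argument, viewing $A$ as a uniformly elliptic coefficient matrix that is moreover a small perturbation of the identity. First I would pass to the weak formulation of \eqref{lowelliptic}: multiplying the equation by an arbitrary test function $\phi\in H^1(\Omega)$, integrating over $\Omega$, integrating by parts on the left, inserting the Neumann condition $A^{\mu\nu}\p_{\nu}WN_{\mu}=h$, and integrating the right-hand side $\di\pi$ by parts as well, one arrives at
\[
\int_{\Omega} A^{\mu\nu}\p_{\mu}W\,\p_{\nu}\phi\,dy=\int_{\Omega}\pi\cdot\nab\phi\,dy-\langle \pi\cdot N-h,\ \phi\rangle_{\p\Omega},
\]
where the last term is the $H^{-0.5}(\p\Omega)$--$H^{0.5}(\p\Omega)$ duality pairing. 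The hypotheses $\pi,\di\pi\in L^2(\Omega)$ are exactly what the normal-trace theorem needs in order for $\pi\cdot N$ to be well defined in $H^{-0.5}(\p\Omega)$, so that this pairing makes sense, and the compatibility condition $\int_{\p\Omega}(\pi\cdot N-h)\,dS=0$ guarantees the identity is consistent for constant $\phi$ (equivalently, that $W$ is determined only up to its mean).

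Next I would test with $\phi=W-\overline{W}$, which is admissible because $W\in H^1(\Omega)$. Since $\nab\phi=\nab W$, the left-hand side equals $\int_{\Omega}A^{\mu\nu}\p_{\mu}W\,\p_{\nu}W\,dy\ge c\|\nab W\|_{0}^2$ by ellipticity; this is where the near-identity smallness $\|A-I\|_{L^{\infty}}\le\epsilon_0$ enters, via $A=I+(A-I)$ it gives coercivity of the bilinear form with a constant that does not degenerate. On the right-hand side one estimates $\big|\int_{\Omega}\pi\cdot\nab W\big|\le\|\pi\|_{0}\|\nab W\|_{0}$ and $\big|\langle\pi\cdot N-h,W-\overline{W}\rangle_{\p\Omega}\big|\le\|h-\pi\cdot N\|_{-0.5,\p\Omega}\|W-\overline{W}\|_{0.5,\p\Omega}$, after which the trace inequality together with the Poincaré inequality for mean-zero functions yields $\|W-\overline{W}\|_{0.5,\p\Omega}\lesssim\|W-\overline{W}\|_{1}\lesssim\|\nab W\|_{0}$. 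Absorbing the common factor $\|\nab W\|_{0}$ gives $\|\nab W\|_{0}\lesssim\|\pi\|_{0}+\|h-\pi\cdot N\|_{-0.5,\p\Omega}$, and one more application of Poincaré produces the first claimed bound on $\|W-\overline{W}\|_{1}$.

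For the second estimate it then remains to recover the zeroth-order piece, i.e.\ to show $\|W\|_{0}\lesssim\|\nab W\|_{0}+\|W\|_{0,\Gamma}$. Writing $W=(W-\overline{W})+\overline{W}$, Poincaré bounds the first summand by $\|\nab W\|_{0}$; for the constant $\overline{W}$ I would restrict to $\Gamma$ and use $|\overline{W}|\,|\Gamma|^{1/2}=\|\overline{W}\|_{0,\Gamma}\le\|W\|_{0,\Gamma}+\|W-\overline{W}\|_{0,\Gamma}\lesssim\|W\|_{0,\Gamma}+\|\nab W\|_{0}$ (the last step by the trace theorem and Poincaré), hence $\|\overline{W}\|_{0}=|\overline{W}|\,|\Omega|^{1/2}\lesssim\|W\|_{0,\Gamma}+\|\nab W\|_{0}$. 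Combining this with the gradient bound from the previous step gives $\|W\|_{1}\lesssim\|\pi\|_{0}+\|h-\pi\cdot N\|_{-0.5,\p\Omega}+\|W\|_{0,\Gamma}$, which completes the proof.

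I expect the only genuinely delicate point -- the natural candidate for the main obstacle -- to be the rigorous justification of the integration by parts and of the duality pairing $\langle\pi\cdot N-h,\phi\rangle_{\p\Omega}$ at this low ($H^1$) regularity: one must invoke the normal-trace theorem for $L^2$ vector fields with $L^2$ divergence to give meaning to $\pi\cdot N\in H^{-0.5}(\p\Omega)$, and a density argument to validate testing against $\phi\in H^1(\Omega)$. Once that is in place, everything else is routine coercivity together with the Poincaré and trace inequalities.
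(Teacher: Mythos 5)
Your variational argument is correct: the paper does not prove this lemma itself but cites Lemma 3.2 of Ignatova--Kukavica, and your proof (weak formulation via the normal-trace theorem for $L^2$ fields with $L^2$ divergence, testing with $W-\overline{W}$, coercivity, Poincar\'e and trace inequalities, then recovering the constant through $\|W\|_{0,\Gamma}$) is exactly the standard energy proof behind that citation. One minor remark: the smallness $\|A-I\|_{L^{\infty}}\leq\epsilon_0$ is not actually needed where you invoke it, since coercivity already follows from the assumed pointwise ellipticity $A^{\mu\nu}\xi_{\mu}\xi_{\nu}\geq \frac{1}{K}|\xi|^2$; this is a harmless redundancy, not a gap.
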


Furthermore, we need the regularity estimate for the flow map $\eta$ on the boundary. $\eta$ verifies an elliptic equation on $\Gamma$ which yields a gain of regularity.  It has been pointed out in \cite{shatah2008geometry} that, this regularity gain is geometric in nature and has nothing to do with the interior regularity (see the counterexamples in \cite{shatah2008geometry}). We will need $H^4(\Gamma)$ estimate of $\eta$ in this paper and we point out that this estimate can be upgraded to $H^5(\Gamma)$.
\begin{prop}\label{dhj}
We have the estimate 
\begin{equation}\label{etabdry}
\|\eta\|_{4,\Gamma}\leq P(\|Q\|_{2, \Gamma}).
\end{equation}
\end{prop}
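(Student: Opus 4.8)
The plan is to read \eqref{etabdry} off from the fifth equation of \eqref{MHDL}, i.e.\ from the surface tension boundary condition, which on $\Gamma=\T^2\times\{1\}$ (where $N=(0,0,1)$) reads $\sigma\,\sqrt{g}\,\Delta_g\eta^\alpha=-a^{3\alpha}Q$, and to treat this as a second-order elliptic system for the three components $\eta^1,\eta^2,\eta^3$ on the closed surface $\Gamma$. By \eqref{gij} the operator is in divergence form, $\sqrt{g}\,\Delta_g\eta^\alpha=\TP_i(\sqrt{g}\,g^{ij}\TP_j\eta^\alpha)$ with $i,j\in\{1,2\}$, and its coefficients $\sqrt{g}\,g^{ij}$ are smooth functions of the first-order tangential derivatives $\TP\eta|_\Gamma$. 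By Lemma \ref{estimatesofa} the flow map stays close to the identity on $[0,T]$ for $T$ small, so $g_{ij}$ is close to $\delta_{ij}$ in $H^2(\Gamma)\hookrightarrow C^0(\Gamma)$; hence the operator is uniformly elliptic, with $\|\sqrt{g}\,g^{ij}-\delta^{ij}\|_{L^\infty(\Gamma)}\le\epsilon$ and $\|\sqrt{g}\,g^{ij}\|_{H^2(\Gamma)}\le C$. Note also that this equation is, geometrically, nothing but $\Delta_g\eta=\mathcal{H}\mathcal{N}$, i.e.\ $\Delta_g\eta$ is purely normal to $\Gamma$.

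First I would bound the right-hand side at the $H^2(\Gamma)$ level: since $H^2(\Gamma)$ is an algebra and, by the trace theorem together with Lemma \ref{estimatesofa}(3), $\|a^{3\alpha}\|_{2,\Gamma}\lesssim\|a\|_{2.5}\le C$, one gets $\|a^{3\alpha}Q\|_{2,\Gamma}\lesssim C\,\|Q\|_{2,\Gamma}\le P(\|Q\|_{2,\Gamma})$. Then I would run the $H^4(\Gamma)$ energy estimate: apply $\TP^3$ to the equation, pair with $\TP^3\eta^\alpha$, and integrate over $\Gamma$ (no boundary terms, $\Gamma$ being a torus). The principal term $-\int_\Gamma\sqrt{g}\,g^{ij}\,\TP_i\TP^3\eta^\alpha\,\TP_j\TP^3\eta^\alpha$ is coercive and bounds $\tfrac{1}{C}\|\TP^4\eta\|_{0,\Gamma}^2$ from below by uniform ellipticity; the right-hand side contributes, after integrating one tangential derivative back, a term $\lesssim\|a^{3\alpha}Q\|_{2,\Gamma}\|\TP^4\eta\|_{0,\Gamma}\lesssim P(\|Q\|_{2,\Gamma})\,\|\eta\|_{4,\Gamma}$, linear in $\|\eta\|_{4,\Gamma}$ and hence absorbable by Young's inequality. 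The remaining commutator terms $[\TP^3,\sqrt{g}\,g^{ij}]\TP_j\eta^\alpha$ I would estimate by distributing the derivatives, using $H^2(\Gamma)\hookrightarrow L^p(\Gamma)$ for all $p<\infty$, Gagliardo-Nirenberg interpolation on $\Gamma$, and the already available bound $\|\eta\|_{3,\Gamma}\lesssim\|\eta\|_{3.5}\le C$, so that each such term (with the one exception discussed below) is controlled by $P(\|\eta\|_{3,\Gamma})$ times a sub-quadratic power of $\|\eta\|_{4,\Gamma}$ and is again absorbable. Collecting everything, and using $\|\eta\|_{0,\Gamma}\lesssim\|\eta\|_{3.5}\le C$ to handle the kernel of the operator, Young's inequality yields $\|\eta\|_{4,\Gamma}^2\le\epsilon\,\|\eta\|_{4,\Gamma}^2+P(\|Q\|_{2,\Gamma})$, which is \eqref{etabdry}. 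The same scheme with $\TP^4$ in place of $\TP^3$ gives the $H^5(\Gamma)$ upgrade mentioned above, at the price of one extra tangential derivative on $Q$.

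The one genuinely delicate point is the top-order piece of the commutator, where all three tangential derivatives fall on the coefficient: $\TP^3(\sqrt{g}\,g^{ij})$ contains $\TP^4\eta$ linearly (through the term $2\,\partial_k\eta^\mu\,\partial_l\TP^3\eta_\mu$ inside $\TP^3 g_{kl}$), so the corresponding contribution to the energy identity is \emph{quadratic} in the top-order quantity $\TP^4\eta$ with an $O(1)$ coefficient, and is not obviously absorbable; moreover, a priori $\eta|_\Gamma$ lies only in $H^3(\Gamma)$ by the trace theorem, so writing $\TP^3(\sqrt{g}\,g^{ij})$ already presupposes the conclusion. These are precisely the obstructions handled in the geometric elliptic estimates of Shatah-Zeng \cite{shatah2008geometry} and Disconzi-Kukavica \cite{disconzi2017prioriI}: one runs the computation at a mollified or finite-difference level, so that $\TP^4\eta$ is a legitimate $L^2$ object, and the dangerous quadratic-top-order terms are disposed of by symmetrizing in the two extra tangential derivatives of $g_{kl}$ and exploiting the structure of the surface tension equation --- namely, that $\Delta_g\eta$ has no component tangent to $\Gamma$, since it equals the mean curvature vector $\mathcal{H}\mathcal{N}$ --- so that the net top-order contribution either has a favourable sign or carries the small factor $\|\sqrt{g}\,g^{ij}-\delta^{ij}\|_{L^\infty(\Gamma)}\le\epsilon$ and is swallowed by the coercive term. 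I expect this structural/symmetrization step to be the crux; everything else is routine trace, product, and interpolation estimates on $\Gamma$ combined with Lemma \ref{estimatesofa}.
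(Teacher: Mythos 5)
Your reduction of \eqref{etabdry} to the divergence-form equation $\TP_i(\sqrt{g}g^{ij}\TP_j\eta^\alpha)=-\frac{1}{\sigma}a^{3\alpha}Q$ on $\Gamma$, and your treatment of the right-hand side via $\|a^{3\alpha}Q\|_{2,\Gamma}\lesssim\|a\|_{2.5}\|Q\|_{2,\Gamma}$, are fine, but the argument has a genuine gap exactly at the point you yourself flag as the crux. In the $H^4(\Gamma)$ energy identity the commutator contribution with all three tangential derivatives on the coefficient produces a term of the schematic form $\int_\Gamma R(\TP\eta)\,(\p_k\eta_\mu\,\p_l\TP^3\eta^\mu)\,(\p_j\eta^\alpha\,\p_i\TP^3\eta_\alpha)$, i.e.\ a quadratic form in the top-order quantity $\TP^4\eta$ whose coefficients are $O(1)$ (the smallness $\|\sqrt{g}g^{ij}-\delta^{ij}\|_{L^\infty(\Gamma)}\leq\epsilon$ does \emph{not} transfer to the derivative $\p(\sqrt{g}g^{ij})/\p g_{kl}$, which is of unit size), so it cannot be absorbed by the coercive term without a sign or cancellation argument. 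You do not supply that argument: you assert that "symmetrizing" and the normality of $\Delta_g\eta$ make the term favourable or small, and attribute this to \cite{shatah2008geometry} and \cite{disconzi2017prioriI}, but the proof of the corresponding statement in \cite{disconzi2017prioriI} (Proposition 3.4 there) is not an energy/symmetrization argument at all; it is an elliptic-regularity bootstrap based on the Dong--Kim theory \cite{donghongjieBMO} for divergence-form operators with (partially) BMO coefficients, which never differentiates the metric coefficients to top order and thereby avoids this term entirely. Likewise the Shatah--Zeng estimates are carried out in a different (Eulerian, geometric) formulation and do not hand you the missing cancellation in the form your computation needs. So the decisive step of your proof is left unproven, and the circularity you note ($\eta|_\Gamma$ is a priori only in $H^3(\Gamma)$, so $\TP^3(\sqrt{g}g^{ij})$ and $\TP^4\eta$ are not yet legitimate $L^2$ objects) is only nominally addressed by mollification, which regularizes the computation but does not remove the uncontrolled quadratic top-order term.

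For comparison, the paper's proof is precisely the route you bypassed: it verifies that the coefficients $\sqrt{g}g^{ij}$ (functions of $\TP\eta|_\Gamma\in H^2(\Gamma)\hookrightarrow C^0\cap\mathrm{BMO}$, by trace and Lemma \ref{estimatesofa}) have the BMO smallness required by \cite{donghongjieBMO}, and then follows the bootstrap of Proposition 3.4 in \cite{disconzi2017prioriI}: $W^{2,p}$ estimates with BMO coefficients improve $\eta|_\Gamma$, which improves the coefficients, until $H^4(\Gamma)$ is reached with right-hand side controlled by $\|Q\|_{2,\Gamma}$. If you want to keep a direct tangential energy estimate, you must either exhibit the sign/cancellation of the quadratic $\TP^4\eta$ term explicitly (this is a nontrivial computation with the quadratic form $(\mathrm{tr}\,S)^2-2|S|^2$ arising from $\delta(\sqrt{g}g^{ij})$, and it is not done in the references you cite) or switch to the BMO elliptic framework as the paper does.
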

\begin{proof}
The proof is based on the conclusion in Dong-Kim \cite{donghongjieBMO}: It suffices to verify the coefficient is bounded in BMO semi-norm. The detailed computation is almost the same as in Proposition 3.4 in \cite{disconzi2017prioriI} so we omit it.
\end{proof}

The next lemma is to introduce the identities about the magnetic field $b$. It was first discovered by Wang in \cite{wangyanjin2012} and used on the free-boundary MHD equations by Gu-Wang in \cite{gu2016construction}. This lemma reveals the regularising effect of the magnetic field $b$; in particular, the flow map $\eta$ is more regular in the direction of $b_0$.

\begin{lem}\label{GW}
Let $(v,b,\eta)$ be a solution to \eqref{MHDL} with initial data $(v_0,b_0,\eta_0)$. Then the following two identities hold:
\begin{equation}\label{GW1}
a^{\nu\alpha} b_{\alpha}=b_0^{\nu},
\end{equation} 
\begin{equation}\label{GW2}
b^{\beta}=(b_0\cdot\p)\eta^{\beta}=b_0^{\nu}\p_{\nu}\eta^{\beta}.
\end{equation} 
\end{lem}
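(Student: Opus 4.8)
The statement to prove is Lemma \ref{GW}, giving the two identities $a^{\nu\alpha}b_\alpha = b_0^\nu$ and $b^\beta = (b_0\cdot\partial)\eta^\beta$ for solutions of \eqref{MHDL}.

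\textbf{Proof proposal.}

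The plan is to establish the first identity \eqref{GW1} by showing that the quantity $a^{\nu\alpha}b_\alpha$ is constant in time along the Lagrangian flow, and then to derive \eqref{GW2} as an algebraic consequence. First I would compute $\partial_t(a^{\nu\alpha}b_\alpha)$ using the product rule. For the derivative of the cofactor matrix I would invoke Lemma \ref{estimatesofa}(8), which gives $\partial_t a^{\mu}_{\alpha} = -a^{\mu}_{\nu}\partial_\beta(\partial_t\eta^\nu)a^{\beta}_{\alpha} = -a^{\mu}_{\nu}\partial_\beta v^\nu a^{\beta}_{\alpha}$ (using $\partial_t\eta = v$); written with the index placement appearing in \eqref{MHDL}, $\partial_t a^{\nu\alpha} = -a^{\nu\gamma}(\partial_\mu v_\gamma) a^{\mu\alpha}$ up to the usual raising/lowering that is trivial here since the ambient metric is Euclidean. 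For the derivative of $b_\alpha$ I would use the second equation of \eqref{MHDL}: $\partial_t b_\alpha = b_\beta a^{\mu\beta}\partial_\mu v_\alpha$. Substituting both into $\partial_t(a^{\nu\alpha}b_\alpha)$, the two resulting terms should cancel exactly: one term is $-a^{\nu\gamma}(\partial_\mu v_\gamma)a^{\mu\alpha}b_\alpha$ and the other is $a^{\nu\alpha}b_\beta a^{\mu\beta}\partial_\mu v_\alpha$, and after relabeling dummy indices ($\alpha\leftrightarrow\gamma$, $\beta$ playing the role that pairs with the remaining $a$) these are negatives of one another. Hence $\partial_t(a^{\nu\alpha}b_\alpha) = 0$, so $a^{\nu\alpha}b_\alpha$ equals its value at $t=0$; since $\eta(0,y)=y$ we have $a(0) = I$, giving $a^{\nu\alpha}b_\alpha|_{t=0} = b_0^\nu$, which is \eqref{GW1}.

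For \eqref{GW2}, I would simply contract \eqref{GW1} with $\partial_\nu\eta^\beta$: from $a^{\nu\alpha}b_\alpha = b_0^\nu$ we get $(\partial_\nu\eta^\beta)a^{\nu\alpha}b_\alpha = b_0^\nu\partial_\nu\eta^\beta$, and since $a = [\partial\eta]^{-1}$ we have $(\partial_\nu\eta^\beta)a^{\nu\alpha} = \delta^{\beta\alpha}$ (i.e., $\partial_\nu\eta^\beta \, a^{\nu}_\alpha = \delta^\beta_\alpha$), so the left side collapses to $b^\beta$. This yields $b^\beta = b_0^\nu\partial_\nu\eta^\beta = (b_0\cdot\partial)\eta^\beta$, as desired.

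The main thing to be careful about — rather than a genuine obstacle — is bookkeeping with the index conventions: the Piola-type identity \eqref{piola}, the exact index placement in Lemma \ref{estimatesofa}(8) versus the form of the equations in \eqref{MHDL}, and making sure the cancellation in $\partial_t(a^{\nu\alpha}b_\alpha)$ is exact and not merely up to lower-order terms. I would also note that the computation is purely pointwise/algebraic once the evolution equations are granted, so no regularity subtleties arise; the identities hold for any sufficiently regular solution for which $a$ is well-defined (guaranteed for short time by Lemma \ref{estimatesofa}). One may also remark that \eqref{GW1} is the Lagrangian reformulation of the fact that $B$ is transported as a ``frozen-in'' field, which is a standard feature of ideal MHD, and \eqref{GW2} then expresses that $\eta$ gains regularity in the $b_0$-direction since $b$ has the same Sobolev regularity as $v$.
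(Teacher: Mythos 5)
Your proof is correct and follows essentially the same route as the paper: both show $\partial_t(a^{\nu\alpha}b_\alpha)=0$ (the paper via the product rule on $a^{\mu\beta}\partial_\mu\eta_\alpha=\delta^\beta_\alpha$, you via the equivalent differentiated identity in Lemma \ref{estimatesofa}(8)), use $a(0)=I$ to get \eqref{GW1}, and then contract with $\partial_\nu\eta^\beta$ and $a:\partial\eta=I$ to get \eqref{GW2}. The index bookkeeping and the exact cancellation you exhibit are correct, so no changes are needed.
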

\begin{proof}
For \eqref{GW1}, we multiply $a^{\nu\alpha}$ to the second equation of \eqref{MHDL} to get $$a^{\nu\alpha}\p_tb_{\alpha}=a^{\nu\alpha}b_{\beta}a^{\mu\beta}\p_{\mu}\p_t\eta_{\alpha}=a^{\nu\alpha}b_{\beta}\p_t(\underbrace{a^{\mu\beta}\p_{\mu}\eta_{\alpha}}_{=\delta^{\beta}_{\alpha}})-b_{\beta}\p_ta^{\mu\beta}(\underbrace{\p_{\mu}\eta_{\alpha}a^{\nu\alpha}}_{\delta_{\mu}^{\nu}})=-b_{\alpha}\p_ta^{\nu\alpha},$$ so $\p_t(a^{\nu\alpha}b_{\alpha})=0$ and thus $a^{\nu\alpha}b_{\alpha}=b_0^{\nu}$. For \eqref{GW2}, it can be easily derived by multiplying $\p_{\nu}\eta_{\beta}$ on the both sides of \eqref{GW1} and using $a:\p\eta =I$.
\end{proof}

The last three lemmas record the results of basic PDE theory. The first one is the well-known Kato-Ponce commutator estimates, the proof of which can be found in \cite{kato1988commutator} and \cite{lidong2019commutator}.
\begin{lem}\label{KatoPonce}
Let $J=(I-\Delta)^{1/2}$, $s\geq 0$. Then the following estimates hold:

(1) $\forall s\geq 0$, we have 
\begin{equation}\label{product}
\|J^s(fg)\|_{L^2}\lesssim \|f\|_{W^{s,p_1}}\|g\|_{L^{p_2}}+\|f\|_{L^{q_1}}\|g\|_{W^{s,q_2}},
\end{equation}with $1/2=1/p_1+1/p_2=1/q_1+1/q_2$ and $2\leq p_1,q_2<\infty$;

(2) $\forall s\in (0,1)$, we have 
\begin{equation}\label{kato1}
\|J^s(fg)-f(J^s g)-(J^s f)g\|_{L^p}\lesssim \| f\|_{W^{s_1,p_1}}\|g\|_{W^{s-s_1,p_2}},
\end{equation}
where $0<s_1<s$ and $1/p_1+1/p_2=1/p$ with $1<p<p_1,p_2<\infty$;

(2') $\forall s\geq 1$, we have
\begin{equation}\label{kato3}
\|J^s(fg)-(J^sf)g-f(J^sg)\|_{L^p}\lesssim\|f\|_{W^{1,p_1}}\|g\|_{W^{s-1,q_2}}+\|f\|_{W^{s-1,q_1}}\|g\|_{W^{1,q_2}}
\end{equation} for all the $1<p<p_1,p_2,q_1,q_2<\infty$ with $1/p_1+1/p_2=1/q_1+1/q_2=1/p$.

(3) $\forall s\geq 1$, we have
\begin{equation}\label{kato2}
\|J^s(fg)-f(J^s g)\|_{L^2}\lesssim \|f\|_{W^{s,p_1}}\|g\|_{L^{p_2}}+\|f\|_{W^{1,q_1}}\|g\|_{W^{s-1,q_2}},
\end{equation}
where $1/2=1/p_1+1/q_1=1/p_2+1/q_2$ with $1<p<p_1,p_2<\infty$;

(3') $\forall s\geq 0$ and $1<p<\infty$, we have
\begin{equation}\label{KATO}
\|J^s(fg)-f(J^s g)\|_{L^p}\lesssim \|\partial f\|_{L^{\infty}}\|J^{s-1} g\|_{L^p}+\|J^s f\|_{L^p}\|g\|_{L^{\infty}};
\end{equation}

(3'') For $1<p<\infty$ and $1<p_1,q_1,p_2,q_2\leq\infty$ satisfying $1/p=1/p_1+1/p_2=1/q_1+1/q_2$, the following hold:
\begin{itemize}
\item If $0<s\leq 1$, then 
\begin{equation}\label{lidong1}
\|J^s(fg)-f(J^s g)\|_{L^p}\lesssim \|J^{s-1} \p f\|_{L^{p_1}}\|g\|_{L^{p_2}};
\end{equation}
\item If $s>1$, then 
\begin{equation}\label{lidong2}
\|J^s(fg)-f(J^s g)\|_{L^p}\lesssim \|J^{s-1}\p f\|_{L^{p_1}}\|g\|_{L^{p_2}}+\|\p f\|_{L^{q_1}}\|J^{s-2} \p g\|_{L^{q_2}}.
\end{equation}
\end{itemize}
\end{lem}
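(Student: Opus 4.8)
The plan is to derive all of \eqref{product}--\eqref{lidong2} from a single device: the Littlewood--Paley decomposition combined with Bony's paraproduct (everything is carried out on $\R^n$, the periodic case being identical and the bounded-domain versions following by a Sobolev extension operator). Fix a dyadic partition of unity, write $\Delta_j$ for the frequency-$2^j$ projection and $S_j=\sum_{k<j}\Delta_k$ for the low-frequency cutoff, and split the product as
$$fg=T_f g+T_g f+R(f,g),\qquad T_f g:=\sum_j S_{j-1}f\,\Delta_j g,\quad R(f,g):=\sum_{|j-k|\le 1}\Delta_j f\,\Delta_k g.$$
Two facts drive everything. First, a summand $S_{j-1}f\,\Delta_j g$ of a paraproduct has Fourier support in an annulus $|\xi|\sim 2^j$, so $J^s$ applied to it equals $2^{js}$ times a Fourier multiplier bounded on $L^p$ uniformly in $j$ by the Mihlin--Hörmander theorem (for $1<p<\infty$; for $p=2$ this is exact by Plancherel, which is exactly why the $L^2$ product estimate \eqref{product} carries no restriction $p\neq1,\infty$). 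Second, a summand $\Delta_j f\,\Delta_k g$ of $R(f,g)$ has Fourier support in a ball $|\xi|\lesssim 2^j$, so $\Delta_q R(f,g)$ only collects terms with $j\gtrsim q$. With these two reductions every claimed inequality becomes a matter of Bernstein's inequality, Hölder's inequality, the square-function characterization of $W^{s,p}$, and the summation of a geometric series in the dyadic index.

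For the product estimate \eqref{product}: applying $J^s$ to $T_g f$ and using the square-function bound gives $\|J^s T_g f\|_{L^2}\lesssim\|f\|_{W^{s,p_1}}\sup_j\|S_{j-1}g\|_{L^{p_2}}\lesssim\|f\|_{W^{s,p_1}}\|g\|_{L^{p_2}}$, where the last step uses the Hardy--Littlewood maximal bound on $S_{j-1}g$ and the hypothesis $2\le p_1<\infty$ (so that the Littlewood--Paley square function is equivalent to the $L^{p_1}$ norm); the low--high piece $T_f g$ is symmetric, with the derivatives transferred to $g$, producing the second term $\|f\|_{L^{q_1}}\|g\|_{W^{s,q_2}}$ and requiring $2\le q_2<\infty$. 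The resonant piece is the delicate one: from $\Delta_q R(f,g)=\sum_{j\ge q-3}\Delta_q(\Delta_j f\,\widetilde\Delta_j g)$, Hölder and Bernstein give $\|\Delta_q R(f,g)\|_{L^2}\lesssim\sum_{j\ge q-3}\|\Delta_j f\|_{L^{p_1}}\|\widetilde\Delta_j g\|_{L^{p_2}}$, and multiplying by $2^{qs}$ and summing in $\ell^2_q$ converges precisely because $s>0$ (at $s=0$ the estimate is just Hölder and the paraproduct machinery is not needed).

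For the commutator bounds the same decomposition is run on $J^s(fg)$, and the subtracted terms $f(J^sg)$ and/or $(J^sf)g$ are arranged to cancel the ``bad'' paraproducts. For the single commutator $J^s(fg)-f(J^sg)$ --- estimates \eqref{kato2}, \eqref{KATO}, \eqref{lidong1}, \eqref{lidong2} --- only the low--high piece $T_f g$ is dangerous (the high--low and resonant pieces are bounded directly with $f$ carrying the derivatives, yielding the $\|f\|_{W^{s,p_1}}\|g\|_{L^{p_2}}$-type terms); there one replaces $J^s(S_{j-1}f\,\Delta_j g)-S_{j-1}f\,J^s\Delta_j g$ by $[J^s,S_{j-1}f]\Delta_j g$ and uses the kernel estimate $\|[J^s,S_{j-1}f]\Delta_j g\|_{L^p}\lesssim 2^{-j}\|\nabla S_{j-1}f\|_{L^{p_1}}\cdot 2^{js}\|\Delta_j g\|_{L^{p_2}}$; summing in $j$ converts the gained $2^{-j}$ and lost $2^{js}$ into $\|\nabla f\|_{L^{p_1}}\|J^{s-1}g\|_{L^{p_2}}$, which is \eqref{KATO}, while retaining a full derivative on the high factor gives the $\|J^{s-1}\nabla f\|_{L^{p_1}}\|g\|_{L^{p_2}}$ form of \eqref{lidong1}--\eqref{lidong2}. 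For the double commutator $J^s(fg)-f(J^sg)-(J^sf)g$ --- estimates \eqref{kato1}, \eqref{kato3} --- both $T_f g$ and $T_g f$ are removed, leaving only the resonant term $R(f,g)$ together with harmless remainders; for $0<s<1$ this remainder is estimated directly in the fractional spaces $W^{s_1,p_1}$, $W^{s-s_1,p_2}$ with $0<s_1<s$, and for $s\ge 1$ in $W^{1,\cdot}$ and $W^{s-1,\cdot}$.

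I expect the main obstacle to be the resonant term together with the bookkeeping of Lebesgue exponents: one must check, case by case, that the hypotheses ($2\le p_1,q_2<\infty$ in \eqref{product}; $1<p<p_1,p_2<\infty$ in \eqref{kato1} and \eqref{KATO}, and the matching relations among $p_1,p_2,q_1,q_2$) are exactly what is needed for the square-function equivalence, the Mihlin multiplier theorem (which excludes $p\in\{1,\infty\}$ and explains why \eqref{product} avoids that restriction), the maximal-function bound on $S_j$, and the geometric summation (which forces $s>0$, resp. $0<s_1<s$, strictly). A secondary but genuinely $J^s$-specific step is the commutator kernel estimate underlying \eqref{KATO}: writing $[J^s,a]u(x)=\int K(x-y)(a(x)-a(y))u(y)\,dy$ with $K$ the localized kernel of $J^s$, one splits $a(x)-a(y)$ into a Lipschitz part bounded by $|x-y|\,\|\nabla a\|_{L^\infty}$ and a rough part absorbed by $\|J^s a\|_{L^p}\|u\|_{L^\infty}$, and then controls the weighted kernel $|x-y|\,K(x-y)$; this is the only place where the specific structure $J^s=(I-\Delta)^{s/2}$ is used rather than a generic Littlewood--Paley argument. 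All of this is classical, and for the complete details I would refer to \cite{kato1988commutator} and \cite{lidong2019commutator} rather than reproduce them here.
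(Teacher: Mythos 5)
The paper gives no proof of this lemma at all — it simply cites \cite{kato1988commutator} and \cite{lidong2019commutator} — and your Littlewood--Paley/paraproduct outline is exactly the standard route taken in those references (Li's paper in particular), so your proposal is consistent with the paper's treatment. You correctly identify the essential mechanisms (Bony splitting, the one-derivative gain in $[J^s,S_{j-1}f]\Delta_j g$, and the geometric summation of the resonant piece requiring $s>0$), and since you, like the authors, ultimately defer the full exponent bookkeeping to the same cited papers, there is nothing substantive to fault.
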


\begin{flushright}
$\square$
\end{flushright}

The second lemma is a refined version of the Sobolev interpolation proved in \cite{sobolevinterpolation}. It will be used to estimate  the lower order error terms. 
\begin{lem}\label{gnsineq}
Suppose $\Omega$ is a domain in $\mathbb{R}^d$. Suppose also $0\leq s_1\leq s\leq s_2$ and $1\leq p,p_1,p_2\leq\infty$. If the condition
\[
1\leq s_2\in\mathbb{Z}\text{ and }p_2=1\text{ and }s_2-s_1\leq 1-\frac{1}{p_1}
\] \textit{fails}, then the following interpolation result holds for all $\theta\in (0,1)$:
\[
\|f\|_{W^{s,p}(\Omega)}\lesssim_{d,s_1,s_2,p_1,p_2,\Omega,\theta}\|f\|_{W^{s_1,p_1}(\Omega)}^{\theta}\|f\|_{W^{s_2,p_2}(\Omega)}^{1-\theta},
\]provided $s=\theta s_1+(1-\theta)s_2$ and $1/p=\theta /p_1 +(1-\theta)/p_2$ hold.
\end{lem}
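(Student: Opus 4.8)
This inequality is the main theorem of \cite{sobolevinterpolation}; here we only outline the strategy one would follow. The plan is to reduce to the whole space, prove the estimate there by a Littlewood-Paley decomposition, and treat the endpoint exponents $p_1,p_2\in\{1,\infty\}$ with extra care --- that care being precisely what singles out the one excluded configuration. For the reduction, a bounded Lipschitz domain $\Omega\subset\mathbb{R}^d$ (in particular the model domain $\T^2\times(0,1)$) carries a Stein-type \emph{universal} extension operator $E$, bounded $W^{\sigma,r}(\Omega)\to W^{\sigma,r}(\mathbb{R}^d)$ simultaneously for all $\sigma\ge0$ and $1\le r\le\infty$; applying $E$ to $f$, proving the inequality for $Ef$ on $\mathbb{R}^d$, and restricting back, it suffices to take $\Omega=\mathbb{R}^d$.

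On $\mathbb{R}^d$ one decomposes $f=S_0f+\sum_{j\ge1}\Delta_jf$ into Littlewood-Paley blocks and characterises $W^{s,p}$ through them (as $B^s_{p,p}$ when $s\notin\Z$, and through the $L^p$-valued square function when $s\in\Z$ and $1<p<\infty$). Bernstein's inequalities give, for $q\le r$, $\|\Delta_jf\|_{L^r}\lesssim 2^{jd(1/q-1/r)}\|\Delta_jf\|_{L^q}$ and $\|J^{\sigma}\Delta_jf\|_{L^r}\sim 2^{j\sigma}\|\Delta_jf\|_{L^r}$ with $J=(I-\Delta)^{1/2}$, so each block relevant to the $W^{s,p}$-norm is dominated by the geometric mean of the corresponding $W^{s_1,p_1}$- and $W^{s_2,p_2}$-blocks, times a dyadic weight $2^{j\mu}$ whose exponent $\mu$ is a fixed linear combination of $s,s_1,s_2,1/p,1/p_1,1/p_2$ that \emph{vanishes} exactly because of the two scaling relations imposed on $(s,1/p)$. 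Summing over $j$ by H\"older with exponents $(1/\theta,1/(1-\theta))$ and treating $S_0f$ directly produces the desired bound --- provided the dyadic summation converges.

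The convergence of that dyadic sum is the crux and the only genuine obstacle. When $p\in\{1,\infty\}$ the space $W^{s,p}$ is \emph{not} a Besov space: if $p_2=1$ and $s_2\in\Z_{\ge1}$ one only has $W^{s_2,1}\hookrightarrow B^{s_2}_{1,\infty}$ and \emph{not} into $B^{s_2}_{1,1}$, so a naive $\ell^1$-summation of the $W^{s_2,1}$-blocks is illegitimate; one must instead insert a first-order Gagliardo-Nirenberg step together with the sharp $L^1$ Bernstein inequality. Carrying this out, the dyadic sum still decays geometrically \emph{unless} $s_2-s_1\le1-1/p_1$, in which case the gain is exhausted and no inequality of this form can hold (the optimal counterexamples are the classical high-frequency truncated bumps). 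This accounts for precisely the excluded configuration ``$1\le s_2\in\Z$, $p_2=1$, $s_2-s_1\le1-1/p_1$''; in every other case the argument closes. The remaining bookkeeping --- the $L^\infty$ endpoint, the subcases with $s$ or $s_1$ integral, and the sharp verification that nothing else fails --- is exactly the content of \cite{sobolevinterpolation}, to which we refer for the complete argument.
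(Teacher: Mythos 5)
Your proposal is correct and in substance coincides with the paper's treatment: the paper offers no proof at all, simply attributing the lemma to \cite{sobolevinterpolation}, and you likewise defer the complete argument to that reference. Your added sketch (extension to $\mathbb{R}^d$, Littlewood--Paley blocks with Bernstein estimates, and the endpoint analysis at $p_2=1$ with $s_2\in\mathbb{Z}$ that isolates the excluded configuration) is a fair outline of the cited Brezis--Mironescu argument, so nothing further is needed.
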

\begin{flushright}
$\square$
\end{flushright}
The last basic lemma is a Sobolev trace-type lemma which allows us to use trace theorem for the Sobolev spaces with negative order in some special cases. It can be found in Theorem A.2.4 in \cite{H-0.5trace} on page 251.
\begin{lem}\label{divtrace}
For $1<p<\infty$, we define the function space for vector fields $X\in\mathbb{R}^d$: $$L_{\dive }^p(\Omega):=\{X\in L^p(\Omega):\dive X\in L^p(\Omega)\}$$ with the graph norm $$\|X\|_{\dive}:=(\|X\|_{L^p(\Omega)}^p+\|\dive X\|_{L^p(\Omega)}^p)^{1/p}.$$ Then there is a unique continuous linear operator $$Tr_N:L_{\dive }^p(\Omega)\to W^{-1/p,p}(\p\Omega)$$ such that $Tr_NX=(X\cdot N)|_{\p\Omega}$ for each $X\in C(\bar{\Omega})\cap L_{\dive}^p(\Omega).$
\end{lem}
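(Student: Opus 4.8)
\textbf{Proof proposal for Lemma \ref{divtrace}.}

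The plan is to construct $Tr_N$ by duality against the normal trace of test fields, using the divergence theorem as the defining identity and a bounded right inverse of the boundary trace $W^{1,p'}(\Omega)\to W^{1/p',p'}(\p\Omega)$. Fix the conjugate exponent $p'$ with $1/p+1/p'=1$, so $W^{-1/p,p}(\p\Omega)=(W^{1/p',p'}(\p\Omega))^*$. First I would recall that the scalar trace operator $\gamma:W^{1,p'}(\Omega)\to W^{1/p',p'}(\p\Omega)$ is bounded and admits a bounded linear right inverse $E:W^{1/p',p'}(\p\Omega)\to W^{1,p'}(\Omega)$ (a bounded extension), with $\gamma E=\id$ and $\|E\varphi\|_{W^{1,p'}(\Omega)}\le C\|\varphi\|_{W^{1/p',p'}(\p\Omega)}$; this is classical for the Lipschitz domain $\Omega$ (in fact here $\Omega=\T^2\times(0,1)$). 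For $X\in L^p_{\dive}(\Omega)$ and $\varphi\in W^{1/p',p'}(\p\Omega)$, I would define
\begin{equation}\label{trNdef}
\langle Tr_N X,\varphi\rangle := \int_{\Omega} X\cdot\nabla (E\varphi)\,dy+\int_{\Omega}(\dive X)\,(E\varphi)\,dy.
\end{equation}
By H\"older's inequality the right side is bounded by $(\|X\|_{L^p}+\|\dive X\|_{L^p})\|E\varphi\|_{W^{1,p'}}\le C\|X\|_{\dive}\|\varphi\|_{W^{1/p',p'}(\p\Omega)}$, so $\varphi\mapsto\langle Tr_N X,\varphi\rangle$ is a bounded linear functional on $W^{1/p',p'}(\p\Omega)$, i.e.\ $Tr_N X\in W^{-1/p,p}(\p\Omega)$ with $\|Tr_N X\|_{W^{-1/p,p}(\p\Omega)}\le C\|X\|_{\dive}$. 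Linearity in $X$ is immediate from \eqref{trNdef}.

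Next I would check that the definition is independent of the chosen extension and agrees with $(X\cdot N)|_{\p\Omega}$ on smooth fields. For the first point, suppose $w\in W^{1,p'}(\Omega)$ with $\gamma w=0$, i.e.\ $w\in W^{1,p'}_0(\Omega)$; then for smooth $X$ the divergence theorem gives $\int_\Omega X\cdot\nabla w+\int_\Omega(\dive X)w=\int_{\p\Omega}(X\cdot N)\gamma w=0$, and this extends to all $X\in L^p_{\dive}(\Omega)$ by density of $C^\infty(\bar\Omega)$ in $L^p_{\dive}(\Omega)$ in the graph norm (a standard mollification/cutoff argument on the half-cylinder; only here does one use that smooth fields are dense, which holds for Lipschitz $\Omega$). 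Hence the right side of \eqref{trNdef} depends only on $\gamma(E\varphi)=\varphi$, not on $E$. For the second point, if $X\in C(\bar\Omega)\cap L^p_{\dive}(\Omega)$ — more precisely $X\in C^1(\bar\Omega)$, with the general continuous case handled by approximation — the classical divergence theorem yields $\int_\Omega X\cdot\nabla(E\varphi)+\int_\Omega(\dive X)(E\varphi)=\int_{\p\Omega}(X\cdot N)\varphi\,dS$ for all $\varphi\in W^{1/p',p'}(\p\Omega)$, which says exactly $Tr_N X=(X\cdot N)|_{\p\Omega}$ as an element of $W^{-1/p,p}(\p\Omega)$.

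Finally, uniqueness of the continuous linear operator with these properties follows because $C^1(\bar\Omega)$ (or $C(\bar\Omega)\cap L^p_{\dive}(\Omega)$) is dense in $L^p_{\dive}(\Omega)$ in the graph norm: any two bounded linear operators agreeing on this dense set and continuous into $W^{-1/p,p}(\p\Omega)$ must coincide. The main obstacle is the density of smooth vector fields in $L^p_{\dive}(\Omega)$ in the graph norm and the availability of a bounded linear extension $E$ right-inverting the $W^{1,p'}$ trace; both are standard for the cylinder $\Omega=\T^2\times(0,1)$ (and for general Lipschitz domains), so in the present setting the verification is routine. For completeness one may simply cite Theorem A.2.4 in \cite{H-0.5trace}, where this construction is carried out; the argument above is the self-contained version.
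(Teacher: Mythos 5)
The paper does not actually prove this lemma: it is quoted verbatim from the literature, with the proof deferred to Theorem A.2.4 in \cite{H-0.5trace}, and the displayed $\square$ closes the statement without argument. Your self-contained duality construction is the standard proof of the normal-trace theorem and is correct in outline: defining $\langle Tr_N X,\varphi\rangle$ through the Gauss--Green identity with a bounded right inverse $E$ of the $W^{1,p'}$ boundary trace gives boundedness into $W^{-1/p,p}(\p\Omega)=(W^{1/p',p'}(\p\Omega))^*$ immediately by H\"older, and consistency plus uniqueness reduce to the two ingredients you isolate, namely density of smooth fields in $L^p_{\dive}(\Omega)$ for the graph norm and the bounded extension operator, both of which are routine for $\Omega=\T^2\times(0,1)$ (and known for Lipschitz domains). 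Two small remarks. For the independence-of-extension step you do not need density of smooth vector fields at all: if $\gamma w=0$ then $w\in W^{1,p'}_0(\Omega)$, so you can approximate $w$ by $C_c^\infty(\Omega)$ functions and use that $\int_\Omega X\cdot\nabla w+\int_\Omega(\dive X)\,w=0$ holds for such $w$ by the very definition of the distributional divergence; this is slightly cleaner than approximating $X$. Where the density of (at least) $C^1(\bar\Omega)$ fields genuinely enters is the uniqueness claim and your consistency statement for merely continuous $X\in C(\bar\Omega)\cap L^p_{\dive}(\Omega)$; you gloss the latter with ``handled by approximation,'' and for the model half-cylinder one should say how (e.g.\ compress in the vertical variable and mollify, using continuity of translations in $L^p$ for $\dive X$ and uniform continuity of $X$ up to the boundary for the normal component), but this is exactly the kind of verification the cited reference carries out, so the omission is cosmetic rather than a gap.
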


\begin{flushright}
$\square$
\end{flushright}

\section{Pressure Estimates}\label{ellipticQ}
In this section we prove the following bounds for $Q$, $Q_t$ and $Q_{tt}$, which will be repeatedly used in the following chapters. Our conclusion is the following proposition. 
\begin{prop}\label{QQ}
Assume Lemma \ref{estimatesofa} holds. Then the total pressure $Q$ satisfies:
\begin{equation}\label{estimatesofQ}
\|Q\|_{3.5}\lesssim \|v\|_{2.5+\delta}^2+\|b\|_{2.5+\delta}^2+\|v_t\|_{2.5}+\|b_0\|_{2.5}\|b\|_{3.5}+1+c\lesssim \PP;
\end{equation}
\begin{equation}\label{estimatesofQt}
\begin{aligned}
\|Q_t\|_{2.5}&\lesssim \|v\|_{2.5+\delta}(\|Q\|_{2.5}+\|v_t\|_{1.5})+\|v\|_2(\|v\|_{2.5+\delta}^2+\|v_t\|_{2.5})+\|v_{tt}\|_{1.5} \\
&~~~~+\|b_0\|_3\|b_t\|_{1.5}+\|b\|_{2.5+\delta}\|b_t\|_{1.5}+\|v\|_{2.5+\delta}\|b_0\|_{2.5}\|b\|_2+\|b_0\|_2\|b_t\|_{2.5}\\
&\lesssim \PP ;
\end{aligned}
\end{equation}
\begin{equation}\label{estimatesofQtt}
\begin{aligned}
\|Q_{tt}\|_{1}&\lesssim (\|v_t\|_1+\|\p Q\|_1)(\|v\|_{1.5}\|v\|_{2.5+\delta}+\|v_t\|_{1.5})+\|v\|_{2.5+\delta}(\|Q_t\|_1+\|v_{tt}\|_{0})+\|v_{ttt}\|_0\\
&~~~~+\|v\|_{2.5+\delta}(\|v\|_{2.5+\delta}^2 \|v\|_1+\|v\|_{2.5+\delta}\|v_t\|_1+\|v_{tt}\|_1)\\
&~~~~+\|v_t\|_{2.5}+\|v\|_2\|Q_t\|_1+(\|v_t\|_{1.5}+\|v\|_2^2)\|Q\|_{2.5}+\|v\|_{2.5+\delta}\|v\|_{2.5}\\
&~~~~+\|v_t\|_1\|b_0\|_2\|b\|_{2.5+\delta}+\|v\|_{1.5}\|b_t\|_{1.5}+\|b_0\|_1\|b_{tt}\|_{1.5}\\
&\lesssim \PP,
\end{aligned}
\end{equation}where $\delta>0$ is a constant to be determined later, and can be sufficiently small if needed. $\PP$ denotes $P(\|v\|_{3.5},\|v_t\|_{2.5},\|v_{tt}\|_{1.5},\|v_{ttt}\|_{0},\|b\|_{3.5},\|b_t\|_{2.5},\|b_{tt}\|_{1.5},\|b_{ttt}\|_{0})$ throughout this paper as shown in the list of notations.
\end{prop}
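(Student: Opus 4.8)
The plan is to derive elliptic equations for $Q$, $Q_t$, and $Q_{tt}$ by taking the divergence (in Lagrangian coordinates, i.e., applying $a^{\nu\alpha}\p_\nu$ to the appropriate time-differentiated momentum equation) and reading off the Neumann data from the surface tension boundary condition, and then apply the elliptic estimates of the appropriate regularity. First, for $Q$: apply $a^{\nu\alpha}\p_\nu$ to the first equation of \eqref{MHDL}. Using the divergence-free condition $a^{\mu\alpha}\p_\mu v_\alpha=0$ and Lemma \ref{estimatesofa}(8) to move derivatives off $a$, one obtains $\p_\nu(a^{\nu\alpha}a^{\mu}_{\alpha}\p_\mu Q)=\p_\nu\!\big(a^{\nu\alpha}b_\beta a^{\mu\beta}\p_\mu b_\alpha\big)-\p_\nu\!\big((\p_t a^{\nu\alpha})v_\alpha\big)$ (schematically), where the right side can be put into the form $\di(\pi)$ with $\pi$ controlled by $\|v\|^2$, $\|b\|^2$, and $\|v_t\|$; using \eqref{GW1} to write $a^{\mu\beta}b_\beta=b_0^\mu$ turns the Lorentz term into $b_0^\mu\p_\mu(a^{\nu\alpha}b_\alpha)=b_0^\mu\p_\mu b_0^\nu$, which is even better. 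On the top boundary $\Gamma$, the Neumann data is $a^{\mu\alpha}N_\mu a^{\nu}_\alpha \p_\nu Q\cdot(\cdots)$; more directly, the fifth line of \eqref{MHDL} gives $a^{\mu\alpha}N_\mu Q=-\sigma\sqrt g\,\Delta_g\eta^\alpha$, and on $\Gamma_0$ we have the flat Neumann condition from $v_3=0$. Then the standard elliptic estimate $\|Q\|_s\lesssim\|\di\pi\|_{s-2}+\|\text{Neumann data}\|_{s-1.5,\p\Omega}+\|Q\|_0$ with $s=3.5$, together with Lemma \ref{estimatesofa}(7) for ellipticity and near-identity of the coefficient matrix, and Proposition \ref{dhj} (or a direct estimate of $\sqrt g\Delta_g\eta$) to handle the boundary term at regularity $H^{2,\Gamma}$, yields \eqref{estimatesofQ}; the $\|Q\|_0$ term is absorbed or bounded crudely.

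For $Q_t$ I would repeat the scheme on the time-differentiated momentum equation $\p_t\p_tv_\alpha-\p_t(b_\beta a^{\mu\beta}\p_\mu b_\alpha)+\p_t(a^\mu_\alpha\p_\mu Q)=0$. Applying $a^{\nu\alpha}\p_\nu$ and commuting, the principal part is again $\p_\nu(a^{\nu\alpha}a^\mu_\alpha\p_\mu Q_t)$; all the extra terms involve $\p_t a$, $\p_{tt}$ derivatives of the lower-order fields, and products, estimated via Lemma \ref{estimatesofa}(4),(4)',(5) and Kato–Ponce (Lemma \ref{KatoPonce}). The Neumann data is obtained by differentiating $a^{\mu\alpha}N_\mu Q=-\sigma\sqrt g\Delta_g\eta^\alpha$ in $t$, using the identity $\p_t(\sqrt g\Delta_g\eta^\nu)=\p_i(\cdots\p_j v)+\cdots$ quoted in Section \ref{section 1.3}, so the boundary term costs one derivative of $v$ on $\Gamma$ relative to $\eta$, i.e., is controlled at the level of $\|v\|$-norms and $\|\eta\|_{4,\Gamma}$ via Proposition \ref{dhj}. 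The standard $H^{2.5}$ elliptic estimate then gives \eqref{estimatesofQt}, with the $\|v_{tt}\|_{1.5}$ term coming from the transported acceleration in $\p_{tt}v$ and the $\|b_0\|\|b_t\|$ terms from the Lorentz force.

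The main obstacle is \eqref{estimatesofQtt}: taking two time derivatives of the momentum equation forces $Q_{tt}$ to satisfy only a \emph{low-regularity} elliptic problem, because $v_{ttt}$ lives merely in $L^2$ and no $H^{s}$ ($s\ge 2$) estimate is available. Here I would use Lemma \ref{H1elliptic} with $A=a^{\nu\alpha}a^\mu_\alpha$ (ellipticity and $\|A-I\|_{L^\infty}\le\epsilon_0$ from Lemma \ref{estimatesofa}(7)): write the equation for $Q_{tt}$ as $\p_\nu(A^{\nu\mu}\p_\mu Q_{tt})=\di\pi$ in $\Omega$ with Neumann data $A^{\nu\mu}\p_\mu Q_{tt}N_\nu=h$ on $\p\Omega$, where $\pi\in L^2$ collects the worst term $a^{\nu\alpha}v_{ttt,\alpha}$ (which is in $L^2$, not a full divergence a priori — so I use Lemma \ref{divtrace} to make sense of $\pi\cdot N$ in $H^{-1/2}(\p\Omega)$) and all other commutator/product terms, and $h$ comes from $\p_{tt}$ of the surface-tension relation plus the flat part on $\Gamma_0$, landing in $H^{-0.5}(\p\Omega)$. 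One must verify the compatibility condition $\int_{\p\Omega}(\pi\cdot N-h)\,dS=0$, which should follow by construction from integrating the $Q_{tt}$-equation over $\Omega$. Then Lemma \ref{H1elliptic} gives $\|Q_{tt}\|_1\lesssim\|\pi\|_0+\|h-\pi\cdot N\|_{-0.5,\p\Omega}+\|Q_{tt}\|_{0,\Gamma}$; bounding $\|Q_{tt}\|_{0,\Gamma}$ via the trace of $H^1$ (absorbable) or via the boundary equation, and expanding $\pi$ and $h$ with Lemma \ref{estimatesofa}(5),(6),(6)' and Kato–Ponce, produces \eqref{estimatesofQtt}. The delicate bookkeeping is tracking which product terms need the extra $\delta$ of regularity (handled by the interpolation Lemma \ref{gnsineq}) so that every factor is controlled by a component of $N(t)$, i.e., by $\PP$.
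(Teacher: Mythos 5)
Your treatment of $Q_{tt}$ is essentially the paper's: a divergence-form equation obtained from applying $a^{\nu\alpha}\p_\nu$ to the twice time-differentiated momentum equation, Lemma \ref{H1elliptic} for the $H^1$ bound, Lemma \ref{divtrace} for the $H^{-0.5}(\p\Omega)$ term, and the twice-differentiated surface tension relation for $\|Q_{tt}\|_{0,\Gamma}$. One caveat there: the alternative you offer, bounding $\|Q_{tt}\|_{0,\Gamma}$ ``via the trace of $H^1$ (absorbable)'', is not available, since $\|Q_{tt}\|_{0,\Gamma}\lesssim\|Q_{tt}\|_{1}$ comes with no small factor; only the boundary-equation route closes. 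The genuine gap is in how you source the boundary data for the $Q$ and $Q_t$ problems. The fifth equation of \eqref{MHDL}, $a^{\mu\alpha}N_{\mu}Q=-\sigma\sqrt{g}\Delta_g\eta^{\alpha}$, contains no derivative of $Q$: on $\Gamma$ it is a Dirichlet-type relation (essentially $a^{33}Q=-\sigma\sqrt{g}\Delta_g\eta^3$), not Neumann data, and using it (or its time derivative) as the top-order boundary input does not close. For $\|Q\|_{3.5}$ a Dirichlet estimate needs the datum in $H^{3}(\Gamma)$, i.e.\ $\|\eta\|_{5,\Gamma}$, and the boundary regularity of $\eta$ comes back as $P(\|Q\|_{3,\Gamma})$ — circular; for $\|Q_t\|_{2.5}$ the differentiated relation $Q_t\sim-\sigma\p_i(\sqrt{g}g^{ij}\p_j v^3)+\cdots$ would be needed in $H^{2}(\Gamma)$, i.e.\ $\|v\|_{4,\Gamma}$, which is not controlled by $N(t)$ (the trace of $\|v\|_{3.5}$ only gives $\|v\|_{3,\Gamma}$). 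So your claim that this boundary term ``costs one derivative of $v$ relative to $\eta$'' and is handled by Proposition \ref{dhj} does not check out.

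The paper instead poses Neumann problems whose data come from restricting the momentum equation, contracted with $a^{3\alpha}$ (and its time derivative), to the boundary — so the data involve $v_t$, $v_{tt}$, $b_0\cdot\p b$ and perturbative $\p Q$, $\p Q_t$ terms, controlled in $H^{2}(\Gamma)$, resp.\ $H^{1}(\Gamma)$ — and uses the surface tension relation only to bound the zeroth-order boundary norms $\|Q\|_{0,\Gamma}\lesssim 1$ and $\|Q_t\|_{0,\Gamma}\lesssim\|v\|_{2.5}$ that a Neumann estimate leaves undetermined. Your first-mentioned expression $a^{\mu\alpha}N_{\mu}a^{\nu}_{\alpha}\p_{\nu}Q$ is indeed the correct conormal quantity; if you evaluate it from the momentum equation rather than ``more directly'' from the surface tension condition, your argument for \eqref{estimatesofQ}--\eqref{estimatesofQt} reduces to the paper's. (A minor further point: \eqref{GW1} does not turn the Lorentz contribution into $b_0^{\mu}\p_{\mu}b_0^{\nu}$ exactly — commutator terms such as $a^{\nu\alpha}\p_{\nu}b_0^{\mu}\p_{\mu}b_{\alpha}$ survive, as in \eqref{Qin} — but these are harmless and estimated the same way.)
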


As for the basic idea of the proof, the control of $Q$ and $Q_t$ will be derived from the standard elliptic estimates, whereas the control of $Q_{tt}$ needs Lemma \ref{H1elliptic} and Lemma \ref{divtrace} due to the low regularity.

\subsection{Control of $Q$ and $Q_t$: Standard elliptic estimate}
First the total pressure $Q$ verifies an elliptic equation as computed in Section 3 of \cite{luozhang2019MHD2.5}: In $\Omega$, we have
\begin{equation}\label{Qin}
\p^{\mu}\p_{\mu}Q=\p_ta^{\nu\alpha}\p_{\nu}  v_{\alpha}+\p_{\nu}((\delta^{\mu\nu}-a^{\mu}_{\alpha}a^{\nu\alpha})\p_{\mu}Q)+a^{\nu\alpha}\p_{\nu}b_0^{\mu}\p_{\mu}b_{\alpha}+\p_{\beta}b_{\gamma}a^{\nu\gamma}a^{\beta\alpha}\p_{\nu}b_{\alpha}-\p_{\beta}b_0^{\mu}a^{\beta\alpha}\p_{\mu}b_{\alpha}.
\end{equation} 
The boundary condition of $Q$ can be derived by contracting the first equation of \eqref{MHDL} with $a^{\mu\alpha}N_{\mu}=a^{3\alpha}$ and then restricting to the boundary:
\begin{equation}\label{Qbd}
\frac{\p Q}{\p N}=(\delta^{\mu 3}-a^{\mu}_{\alpha}a^{3\alpha})\p_{\mu}Q-a^{3\alpha}\p_t v_{\alpha}+a^{3\alpha}b_0^{\nu}\p_{\nu}b_{\alpha}.
\end{equation}

Denoting the RHS of \eqref{Qin} and \eqref{Qbd} by $f$ and $g$ and invoking the standard elliptic estimate, we obtain $$\|Q\|_{3.5}\lesssim\|f\|_{1.5}+\|g\|_{2,\Gamma}+\|Q\|_{0,\Gamma},$$ where the last term can be controlled by using the boundary condition. We apply the multiplicative Sobolev inequality (as a corollary of Kato-Ponce product estimate \eqref{product}) to get the following control of $f$ and $g$:
\begin{equation}\label{Qinterior}
\begin{aligned}
\|f\|_{1.5}&\leq\|\p_ta^{\nu\alpha}\p_{\nu}v_{\alpha}\|_{1.5}+\|(\delta^{\mu\nu}-a^{\mu}_{\alpha}a^{\nu\alpha})\p_{\mu}Q\|_{2.5} \\
&~~~~+\|a^{\nu\alpha}\p_{\nu}b_0^{\mu}\p_{\mu}b_{\alpha}\|_{1.5}+\|\p_{\beta}b_{\gamma}a^{\nu\gamma}a^{\beta\alpha}\p_{\nu}b_{\alpha}\|_{1.5}+\|\p_{\beta}b_0^{\mu}a^{\beta\alpha}\p_{\mu}b_{\alpha}\|_{1.5} \\
&\lesssim \|a_t\|_2\|v\|_{2.5+\delta}+\epsilon\|Q\|_{3.5}+\|b\|_{2.5+\delta}\|b_0\|_{2.5+\delta} \\
&\lesssim \|v\|_{2.5+\delta}^2+\|b\|_{2.5+\delta}^2+\epsilon\|Q\|_{3.5};
\end{aligned}
\end{equation}

\begin{equation}\label{Qbdry}
\|g\|_{2,\Gamma}\lesssim\epsilon\|Q\|_{3.5}+\|v_t\|_{2.5}+\|b_0\|_{2.5}\|b\|_{3.5},
\end{equation} where we use trace lemma to control $g$.
It remains to bound $\|Q\|_{0,\Gamma}$. Invoking the surface tension equation, i.e., the fifth equation in \eqref{MHDL} and Lemma \ref{estimatesofa}, we have $$\|Q\|_{0,\Gamma}\lesssim \|\sqrt{g}\Delta_g\eta\|_{0,\Gamma}\lesssim 1$$
Therefore, after absorbing the $\epsilon$-term to LHS, one has 
\begin{equation}\label{Q}
\|Q\|_{3.5}\lesssim \|v\|_3^2+\|b\|_3^2+\|v_t\|_{2.5}+\|b_0\|_{2.5}\|b\|_{3.5}+1.
\end{equation}

We next estimate $Q_t$ in $H^{2.5}$.  Taking time derivative in \eqref{Qin} and \eqref{Qbd}, we get the following elliptic equation for $Q_t$
\begin{equation}\label{Qtin}
\begin{aligned}
\p^{\mu}\p_{\mu}Q_t&=\p_{tt}a^{\nu\alpha}\p_{\nu}v_{\alpha}+\p_ta^{\nu\alpha}\p_{\nu}\p_t v_{\alpha}\\
&~~~~-\p_{\nu}(\p_ta^{\mu}_{\alpha}a^{\nu\alpha}\p_{\mu}Q)-\p_{\nu}(a^{\mu}_{\alpha}\p_t a^{\nu\alpha}\p_{\mu}Q)+\p_{\nu}((\delta^{\mu\nu}-a^{\mu}_{\alpha}a^{\nu}_{\alpha})\p_{\mu}Q_t) \\
&~~~~+a^{\nu\alpha}_t\p_{\nu}b_0^{\mu}\p_{\mu}b_{\alpha}+a^{\nu\alpha}\p_{\nu}b_0^{\mu}\p_t\p_{\mu}b_{\alpha}+\p_t(\p_{\beta}b_{\gamma}\p_{\nu}b_{\alpha})a^{\nu\gamma}a^{\beta\alpha}+\p_{\beta}b_{\gamma}\p_t(a^{\nu\gamma}a^{\beta\alpha})\p_{\nu}b_{\alpha}\\
&~~~~-\p_{\beta}b_0^{\mu}a^{\beta\alpha}\p_t\p_{\mu}b_{\alpha}-\p_{\beta}b_0^{\mu}a^{\beta\alpha}_t\p_{\mu}b_{\alpha}\\
&=:f^*
\end{aligned}
\end{equation} with the boundary condition
\begin{equation}\label{Qtbd}
\begin{aligned}
\frac{\p Q_t}{\p N}&=(\delta^{\mu 3}-a^{\mu}_{\alpha}a^{3\alpha})\p_{\mu}Q_t-\p_t(a^{\mu}_{\alpha}a^{3\alpha})\p_{\mu}Q-a^{3\alpha}\p_{tt} v_{\alpha}+a^{3\alpha}b_0^{\nu}\p_{\nu}\p_t b_{\alpha}-a^{3\alpha}_t (\p_t v_{\alpha}-b_0^{\nu}\p_{\nu}\p_t b_{\alpha})\\
&~=:g^*,\quad \text{on}\,\,\Gamma
\end{aligned}
\end{equation}
The standard elliptic estimate 
gives$$\|Q_t\|_{2.5}\lesssim\|f^*\|_{0.5}+\|g^*\|_{1,\Gamma}+\|Q_t\|_{0,\Gamma},$$
and by the multiplicative Sobolev inequality and trace lemma, one has
\begin{equation}\label{Qtinterior}
\begin{aligned}
\|f^*\|_{0.5}&\lesssim\epsilon\|Q_t\|_{2.5}+\|v\|_{2.5+\delta}(\|Q\|_{2.5}+\|v_t\|_{1.5})+\|v\|_2(\|v\|_{2.5+\delta}^2+\|v_t\|_{2.5}) \\
&~~~~+\|b_0\|_3\|b_t\|_{1.5}+\|b\|_{2.5+\delta}\|b_t\|_{1.5}+\|v\|_{2.5+\delta}\|b_0\|_{2.5}\|b\|_2
\lesssim\epsilon\|Q_t\|_{2.5}+ \PP ;
\end{aligned}
\end{equation}
\begin{equation}\label{Qtbdry}
\|g^*\|_{1,\Gamma}\lesssim\epsilon\|Q_t\|_{2.5}+\|v_{tt}\|_{1.5}+\|b_0\|_2\|b_t\|_{2.5}\lesssim \epsilon\|Q_t\|_{2.5}+\PP .
\end{equation}

For the boundary control, we first derive the expression of $Q_t|_{\Gamma}$. Time differentiating the equation $a^{33}Q=-\sigma\p_i(\sqrt{g} g^{ij}\p_j\eta^3$) we get:
\begin{equation}\label{Qtbd2}
Q_t=(1-a^{33})Q_t-a_t^{33}Q-\sigma\p_i(\p_t(\sqrt{g} g^{ij})\p_j \eta^3)-\sigma\p_1(\sqrt{g} g^{ij} \p_j v^3).
\end{equation}
By H\"older's inequality, Sobolev embedding and trace lemma, one can mimic the proof of Proposition 3.2 in \cite{disconzi2017prioriI} to get
\begin{equation}
\|Q_t\|_{0,\Gamma}\lesssim\|v\|_{2.5}.
\end{equation}

Therefore, summing up \eqref{Qtinterior}. \eqref{Qtbdry} and \eqref{Qtbd2}, then absorbing the $\epsilon$-term to LHS, one can get the bound for $Q_t$ as shown in \eqref{estimatesofQt}.

\subsection{$H^1$ control of $Q_{tt}$: Low regularity elliptic estimate}
In this section we will derive the  $H^1$ estimate of $Q_{tt}$. Although $Q_{tt}$ satisfies an elliptic PDE as $Q$ and $Q_t$, the standard elliptic, i.e. $\|u\|_{s}\lesssim \|f\|_{s-2}+\|g\|_{s-1.5,\p}+\|u\|_{0}$ is valid only for $s\geq 2$. Therefore we need to invoke the $H^1$ elliptic estimate in Lemma \ref{H1elliptic}. Since the RHS of the first equation in \eqref{lowelliptic} is required to be the divergence form, we need to start with the first equation in \eqref{MHDL} to derive the elliptic equation of $Q_{tt}$ instead of merely taking a time derivative in \eqref{Qtin}-\eqref{Qtbd}. 

Contracting the first equation of \eqref{MHDL} with $a^{\nu\alpha}\p_{\nu}$, invoking Piola's identity $\p_{\nu}a^{\nu\alpha}=0$, and then taking time derivative twice, we get 
\begin{equation}\label{Qttin}
\begin{aligned}
\p_{\nu}(a^{\nu\alpha}a^{\mu}_{\alpha}\p_{\mu}Q_{tt})&=\p_{\nu}\left(-\p_{tt}(a^{\nu\alpha}a^{\mu}_{\alpha})\p_{\mu}Q-2\p_t(a^{\nu\alpha}a^{\mu}_{\alpha})\p_{\mu}Q_t+\p_{tt}(a^{\nu\alpha}_t v_{\alpha})\right)\\
&~~~~+\p_{\nu}\left(a^{\nu\alpha}_{tt}b_0^{\mu}\p_{\mu}b_{\alpha}+2a_t^{\nu\alpha}b_0^{\mu}\p_{\mu}\p_t b_{\alpha}+a^{\nu\alpha}b_0^{\mu}\p_{\mu}\p_{tt}b_{\alpha}\right),
\end{aligned}
\end{equation}
with the boundary condition 
\begin{equation}\label{Qttbd}
\begin{aligned}
a^{\nu\alpha}a^{\mu}_{\alpha}\p_{\mu}Q_{tt} N_{\nu}&=\left(-\p_{tt}(a^{\nu\alpha}a^{\mu}_{\alpha})\p_{\mu}Q-2\p_t(a^{\nu\alpha}a^{\mu}_{\alpha})\p_{\mu}Q_t+\p_{tt}(a^{\nu\alpha} \p_t v_{\alpha})\right)N_{\nu}\\
&~~~~+\left(a^{\nu\alpha}_{tt}b_0^{\mu}\p_{\mu}b_{\alpha}+2a_t^{\nu\alpha}b_0^{\mu}\p_{\mu}\p_t b_{\alpha}+a^{\nu\alpha}b_0^{\mu}\p_{\mu}\p_{tt}b_{\alpha}\right)N_{\nu}.
\end{aligned}
\end{equation}
Let $A^{\mu\nu}=a^{\nu\alpha}a^{\mu}_{\alpha}$, $W=Q_{tt}$, and
\[
\begin{aligned}
\pi^{\nu}&=-\p_{tt}(a^{\nu\alpha}a^{\mu}_{\alpha})\p_{\mu}Q-2\p_t(a^{\nu\alpha}a^{\mu}_{\alpha})\p_{\mu}Q_t+\p_{tt}(a^{\nu\alpha}_t v_{\alpha})\\
&~~~~+a^{\nu\alpha}_{tt}b_0^{\mu}\p_{\mu}b_{\alpha}+2a_t^{\nu\alpha}b_0^{\mu}\p_{\mu}\p_t b_{\alpha}+a^{\nu\alpha}b_0^{\mu}\p_{\mu}\p_{tt}b_{\alpha},
\end{aligned}
\]and 
\[
\begin{aligned}
h~&=(-\p_{tt}(a^{\nu\alpha}a^{\mu}_{\alpha})\p_{\mu}Q-2\p_t(a^{\nu\alpha}a^{\mu}_{\alpha})\p_{\mu}Q_t+\p_{tt}(a^{\nu\alpha} \p_t v_{\alpha})\\
&~~~~+a^{\nu\alpha}_{tt}b_0^{\mu}\p_{\mu}b_{\alpha}+2a_t^{\nu\alpha}b_0^{\mu}\p_{\mu}\p_t b_{\alpha}+a^{\nu\alpha}b_0^{\mu}\p_{\mu}\p_{tt}b_{\alpha})N_{\nu}.
\end{aligned}
\] Then  \eqref{Qttin}-\eqref{Qttbd} exactly has the form as in \eqref{lowelliptic}. Before adapting Lemma \ref{H1elliptic} to the equation of $Q_{tt}$, we need to verify that $\pi$ and $\dive \pi$ are $L^2$-integrable. Repeatedly using H\"older's inequality and Sobolev embedding and Lemma \ref{estimatesofa}, we have
\begin{equation}\label{pi}
\begin{aligned}
\|\pi\|_{L^2}&\lesssim (\|a_{tt}\|_{L^3}\|a\|_{L^{\infty}}+\|a_t\|_{L^6}^2)\|\p Q\|_{L^6}+\|a\|_{L^6} \|a_t\|_{L^6} \|\p Q_t\|_{L^6}\\
&~~~~+\|a_{ttt}\|_{L^2}\|v\|_{L^{\infty}}+\|a_t\|_{L^3}\|v_t\|_{L^6}+\|a\|_{L^3}\|v_{tt}\|_{L^6} \\
&~~~~+\|a_{tt}\|_{L^2}\|b_0\|_{L^{\infty}}\|\p b\|_{L^{\infty}}+\|a_t\|_{L^6} \|b_0\|_{L^{\infty}}\|\p b_t\|_{L^3}+\|a\|_{L^{\infty}}\|b_0\|_{L^6}\|\p b_{tt}\|_{L^3} \\
&\lesssim (\|v_t\|_1+\|\p Q\|_1)(\|v\|_{1.5}\|v\|_{2.5+\delta}+\|v_t\|_{1.5})+\|v\|_{2.5+\delta}(\|Q_t\|_1+\|v_{tt}\|_{0})+\|v_{ttt}\|_0\\
&~~~~+\|v\|_{2.5+\delta}(\|v\|_{2.5+\delta}^2 \|v\|_1+\|v\|_{2.5+\delta}\|v_t\|_1+\|v_{tt}\|_1)\\
&~~~~+\|v_t\|_1\|b_0\|_2\|b\|_{2.5+\delta}+\|v\|_{1.5}\|b_t\|_{1.5}+\|b_0\|_1\|b_{tt}\|_{1.5}\\
&\lesssim \PP.
\end{aligned}
\end{equation}
Next we verify that $\dive\pi\in L^2$. From \eqref{Qin} and \eqref{Qttin}, we have 
\begin{equation}
\dive \pi=\p_{tt}(a_t^{\nu\alpha}\p_{\nu}v_{\alpha}+a^{\nu\alpha}\p_{\nu}b_0^{\mu}\p_{\mu}b_{\alpha}+\p_{\beta}b_{\gamma}a^{\nu\gamma}a^{\beta\alpha}\p_{\nu}b_{\alpha}-\p_{\beta}b_0^{\mu}a^{\beta\alpha}\p_{\mu}b_{\alpha}).
\end{equation} One can expand all terms and repeatly using H\"older's inequality, Sobolev embedding and Lemma \ref{estimatesofa} to get
\begin{equation}\label{divpi}
\begin{aligned}
\|\dive\pi\|_{L^2}&\lesssim\|a_{ttt}\|_{L^2}\|v\|_{2.5+\delta}+\|a_{tt}\|_{L^6}(\|\p v_t\|_{L^3}+\|\p b_0\|_{L^6}\|\p b\|_{L^6})\\
&~~~~+\|a_t\|_{L^6}(\|\p v_{tt}\|_{L^3}+\|\p b\|_{L^6}\|\p b_t\|_{L^6})+\|a\|_{L^{\infty}}(\|\p b\|_{L^{\infty}}\|\p b_{tt}\|_{L^2})\\
&\lesssim \PP.
\end{aligned}
\end{equation}
Now, Lemma \ref{H1elliptic} is valid for \eqref{Qttin}-\eqref{Qttbd} and yields that 
\begin{equation}\label{Qtt1}
\|Q_{tt}\|_1\lesssim \|\pi\|_0+\|h-\pi\cdot N\|_{-0.5,\Gamma}+\|Q_{tt}\|_{0,\Gamma}
\end{equation}where we use Lemma \ref{divtrace} for $Tr_N: L_{\dive}^2(\Omega)\to H^{-1/2}(\p\Omega)$ to get $$\|h-\pi\cdot N\|_{-0.5,\Gamma}=\|\p_t^3(a^{\nu\alpha}v_{\alpha})N_{\nu}\|_{-0.5,\Gamma}\lesssim \sum_{\nu}\|\p_t^3(a^{\nu\alpha}v_{\alpha})N_{\nu}\|_0.$$ This is valid because $\p_{\nu}(\p_t^3(a^{\nu\alpha}v_{\alpha}))=0\in L^2$.

It remains to control $\|Q_{tt}\|_{0,\Gamma}$. One can differentiate $\p_t$ twice to the surface tension equation on the boundary, i.e. the fifth equation in \eqref{MHDL}, to get
\begin{equation}
\begin{aligned}
Q_{tt}&=(1-a^{33})Q_{tt}-\p_t^2a^{33}Q-2a^{33}_tQ_t \\
&~~~~-\sigma\p_i(\sqrt{g}g^{ij}\p_j\eta^3_{tt})-\sigma\p_i(\p_t^2(\sqrt{g}g^{ij})\p_j\eta^3)-2\sigma\p_i(\p_t(\sqrt{g}g^{ij})\p_j\p_t\eta^3).
\end{aligned}
\end{equation}
Therefore, it suffices to control the $L^2(\Gamma)$ norm of each term on RHS. The terms containing $q$ are all easy to control by H\"older's inequality, Sobolev embedding and trace lemma:
\[
\|(1-a^{33})Q_{tt}\|_{0,\Gamma}+\|\p_t^2a^{33}Q\|_{0,\Gamma}+\|2a^{33}_tQ_t \|_{0,\Gamma}\lesssim\epsilon\|Q_{tt}\|_1+\|v_t\|_{1.5}\|Q\|_{2.5}+\|v\|_2\|Q_t\|_1.
\] 
For the $L^2(\Gamma)$-estimate of $-\sigma\p_i(\sqrt{g}g^{ij}\p_j\eta^3_{tt})$, we have:
\[
\|\p_i(\sqrt{g}g^{ij}\p_j\eta^3_{tt})\|_{0,\Gamma}\lesssim\|v_t\|_{2.5},
\]
where we refer to Proposition 3.2 in \cite{disconzi2017prioriI} for detailed computation.

However, the $L^2(\Gamma)$-estimates of $\p_i(\p_t^2(\sqrt{g}g^{ij})\p_j\eta^3)$ and $\p_i(\p_t(\sqrt{g}g^{ij})\p_j\p_t\eta^3)$ need to be refined in order to make us easier to write the pressure estimates in terms of the sum of initial data and time integral of $\PP$ when we close all the a priori estimates. First, we have
\[
\begin{aligned}
\|\p_i(\p_t^2(\sqrt{g}g^{ij})\p_j\eta^3)\|_{0,\Gamma}&\leq \|\p_i(\p_t^2(\sqrt{g}g^{ij}))\p_j\eta^3\|_{0,\Gamma}+\|\p_t^2(\sqrt{g}g^{ij})\p_i\p_j\eta^3\|_{0,\Gamma}\\
&\lesssim\|\p_t^2\TP(\sqrt{g}g^{-1})\|_{0,\Gamma}+\|\p_t^2(\sqrt{g}g^{ij})\|_{0,\Gamma}\|\TP^2\eta\|_{L^{\infty}(\Gamma)}.
\end{aligned}
\]
Then we write the derivatives of $\sqrt{g}g^{ij}$ in terms of $R(\TP\eta)$, a rational function of $\p_i \eta$ sstisfying $\|R(\TP\eta)\|_{1.5,\Gamma}\lesssim\|\TP\eta\|_{1.5,\Gamma}$ (For the detailed illustration, see Remark 2.4 in \cite{disconzi2017prioriI}):
\[
\p_t^2\TP(\sqrt{g}g^{-1})=R(\TP\eta)(\TP v)^2\TP^2\eta+R(\TP\eta)\TP v_t +R(\TP\eta)\TP v\TP^2 v+R(\TP\eta)\TP^2v_t,
\]and 
\[
\p_t^2(\sqrt{g}g^{-1})=R(\TP\eta)\TP v_t+R(\TP\eta)(\TP v)^2.
\]Invoking Lemma \ref{dhj}, we have
\[
\begin{aligned}
\|R(\TP\eta)(\TP v)^2\TP^2\eta+R(\TP\eta)\TP v_t\|_{0,\Gamma}&\lesssim\|R(\TP\eta)\|_{L^{\infty(\Gamma)}}\|\TP v\|_{L^4(\Gamma)}^2\|\TP^2\eta\|_{L^4(\Gamma)}\\
&\lesssim\|v\|_2 \\
\|R(\TP\eta)\TP v_t +Q(\TP\eta)\TP v\TP^2 v\|_{0,\Gamma}&\lesssim\|\TP v_t\|_{L^2(\Gamma)}\|\TP^2\eta\|_{L^{\infty(\Gamma)}} \\
&\lesssim\|v_t\|_{1.5}\|Q\|_2 \\
\|R(\TP\eta)\TP v\TP^2 v\|_{0,\Gamma}&\lesssim \|v\|_{2.5}\|v\|_{2.5+\delta} \\
\|R(\TP\eta)\TP^2v_t\|_{0,\Gamma}&\lesssim \|v_t\|_{2.5},
\end{aligned}
\] so
\begin{equation}
\|\p_i(\p_t^2(\sqrt{g}g^{ij}))\p_j\eta^3\|_{0,\Gamma}\lesssim\|v\|_2+\|v_t\|_{1.5}\|Q\|_2+\|v\|_{2.5}\|v\|_{2.5+\delta}+\|v_t\|_{2.5}.
\end{equation}
Similarly, we can get 
\begin{equation}
\|\p_t^2(\sqrt{g}g^{ij})\p_i\p_j\eta^3\|_{0,\Gamma}\lesssim\|v_t\|_{1.5}\|Q\|_2+\|v_t\|_2+\|v\|_2^2\|Q\|_{2}.
\end{equation}
Moreover, since $\p_t (\sqrt{g}g^{-1})=R(\cp \eta)(\cp v)^2$, we have
\begin{equation}
\|\p_i(\p_t(\sqrt{g}g^{ij})\p_j\p_t\eta^3)\| \lesssim \|v\|_{2.5}.
\end{equation}

Summing up all the boundary terms of $Q_{tt}$, one gets
\begin{equation}\label{Qttbd2}
\|Q_{tt}\|_{0,\Gamma}\lesssim\|v_t\|_{2.5}+\|v\|_2\|Q_t\|_1+(\|v_t\|_{1.5}+\|v\|_2^2)\|Q\|_{2.5}+\|v\|_{2.5+\delta}\|v\|_{2.5}.
\end{equation}Therefore, combining \eqref{Qtt1} and \eqref{Qttbd2}, and absorbing the $\epsilon$-terms to LHS, we get the $H^1$ estimate of $Q_{tt}$ as shown in \eqref{estimatesofQtt}.

\begin{flushright}
$\square$
\end{flushright}
\section{Div-Curl Estimates}\label{divcurl}

In this section we derive the div-curl estimates of $v$ and $b$ and those of their time derivatives as the first step to derive the desired a priori estimates. Specifically, we show:
\begin{prop}\label{dc}
Assume the assumptions of Lemma \ref{estimatesofa} holds, we have the following estimates:
\begin{equation}\label{divcurlvb00}
\begin{aligned}
\|v\|_{3.5}&\lesssim\PP_0+\int_0^t \PP +\|v^3\|_{3,\Gamma},\\
\|b\|_{3.5}&\lesssim\PP_0+\int_0^t \PP;
\end{aligned}
\end{equation}
and
\begin{equation}\label{divcurlvtbt00}
\begin{aligned}
\|v_t\|_{2.5}&\lesssim\PP_0+\int_0^t \PP +\|v_t^3\|_{2,\Gamma},\\
\|b_t\|_{2.5}&\lesssim\PP_0+\int_0^t \PP;
\end{aligned}
\end{equation}
and
\begin{equation}
\begin{aligned}\label{divcurlvttbtt00}
\|v_{tt}\|_{1.5}&\lesssim\PP_0+\int_0^t \PP~ds +\|v_{tt}^3\|_{1,\Gamma}+P(\| v\|_{2.5+\delta}),\\
\|b_{tt}\|_{1.5}&\lesssim\PP_0+\int_0^t \PP~ds+P(\| v\|_{2.5+\delta},\|b\|_{2.5+\delta}),
\end{aligned}
\end{equation}where $\delta>0$ is a constant to be determined, and can be arbitratily small.
\end{prop}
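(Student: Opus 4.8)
The plan is to run, for each of $v$, $b$ and their time derivatives, the standard Hodge-type div-curl estimate
\[
\|X\|_{s}\lesssim \|X\|_{0}+\|\di X\|_{s-1}+\|\curl X\|_{s-1}+\|X\cdot N\|_{s-1/2,\p\Omega},
\]
with $\di$, $\curl$ the \emph{flat} (Lagrangian) operators, and to bound each piece. The $\|X\|_{0}$-terms are immediate from $X(t)=X(0)+\int_0^t\p_tX$. The divergence is the easy part: the Eulerian constraints $a^{\mu\alpha}\p_{\mu}v_{\alpha}=0=a^{\mu\alpha}\p_{\mu}b_{\alpha}$ give $\di v=(\delta^{\mu\alpha}-a^{\mu\alpha})\p_{\mu}v_{\alpha}$ (and likewise for $b$), which by Lemma \ref{estimatesofa}(7) and the algebra property of $H^{s-1}$ ($s-1\ge 1.5$) is $\le\epsilon\|v\|_{s}$, to be absorbed once $T$ is small; differentiating the constraints in $t$ and using Lemma \ref{estimatesofa}(4)--(6) for the $a_t,a_{tt},a_{ttt}$ factors handles $\di v_t,\di v_{tt}$ and their $b$-analogues, and produces the lower-order remainders $P(\|v\|_{2.5+\delta})$, $P(\|v\|_{2.5+\delta},\|b\|_{2.5+\delta})$ appearing in \eqref{divcurlvttbtt00}.

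The substance is the curl, and here I would follow \cite{luozhang2019MHD2.5}: track the \emph{Eulerian} vorticities $\omega=\curl_u u$ and $j=\curl_u B$ pulled back by $\eta$. Since the flow follows $u$, the time derivative of the pull-back is $(D_t\cdot)\circ\eta$, so the ideal MHD system yields the coupled pair
\[
\p_t\omega=\omega\cdot\nab u+B\cdot\nab j-j\cdot\nab B,\qquad \p_t j=B\cdot\nab\omega+\cdots,
\]
(all terms pulled back; $\cdots$ is bilinear in $\nab u,\nab B$), where by Lemma \ref{GW} each $B\cdot\nab(\cdot)$ becomes $b_0^{\mu}\p_{\mu}(\cdot)$. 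Estimating $\|\omega\|_{s-1}^2+\|j\|_{s-1}^2$ \emph{jointly}, the only dangerous (top-order) contributions, after integrating by parts, are
\[
2\langle J^{s-1}\omega,\,b_0^{\mu}\p_{\mu}J^{s-1}j\rangle+2\langle J^{s-1}j,\,b_0^{\mu}\p_{\mu}J^{s-1}\omega\rangle=-2\!\int_{\Omega}(\di b_0)\,J^{s-1}\omega\cdot J^{s-1}j+2\!\int_{\p\Omega}(b_0\cdot N)\,J^{s-1}\omega\cdot J^{s-1}j=0,
\]
using $\di b_0=0$ and $b_0\cdot N=0$ on $\p\Omega$. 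What remains are: the commutators $[J^{s-1},b_0^{\mu}\p_{\mu}](\cdot)$, controlled by $\|b_0\|_{s-1}$ times $\|\omega\|_{s-1}$ or $\|j\|_{s-1}$ via the Kato-Ponce estimates of Lemma \ref{KatoPonce}; the products of two first-order quantities $\omega\cdot\nab u$, $j\cdot\nab B$, $\cdots$, which are $\lesssim\PP$; and, on converting $\omega,j$ back to $\curl v,\curl b$, the errors $(\delta-a)\p v$, $(\delta-a)\p b$, again $\le\epsilon(\|v\|_{s}+\|b\|_{s})$. Gronwall then gives $\|\curl v\|_{s-1}+\|\curl b\|_{s-1}\lesssim\PP_0+\int_0^t\PP+\epsilon(\|v\|_{s}+\|b\|_{s})$. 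For the time-differentiated curls one applies $\p_t$ (resp.\ $\p_t^2$) to the pair: the cancellation survives because $\p_t^k(b_0^{\mu}\p_{\mu}\cdot)$ still carries the single divergence-free, normal-vanishing coefficient $b_0$, while the new $a_t,a_{tt},a_{ttt}$ in the pull-backs are absorbed via Lemma \ref{estimatesofa}(4)--(6), again contributing the $\|v\|_{2.5+\delta}$- and $\|b\|_{2.5+\delta}$-polynomials.

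For the boundary term: on $\Gamma_0$ we have $v_3=b_3=0$, so only $\Gamma$ (where $N=(0,0,1)$) contributes. For $v$ one keeps $\|v\cdot N\|_{s-1/2,\Gamma}=\|v^3\|_{s-1/2,\Gamma}$, which is exactly the term left on the right of \eqref{divcurlvb00}--\eqref{divcurlvttbtt00} and is dealt with later by the boundary elliptic estimates. For $b$ there is nothing to keep: $B\cdot\mathcal N=0$ reads $a^{3\alpha}b_{\alpha}=0$ on $\Gamma$ in Lagrangian form, so the flat normal trace is $b_3=(\delta^{3\alpha}-a^{3\alpha})b_{\alpha}$; writing $b_{\alpha}=(b_0\cdot\p)\eta_{\alpha}$ by Lemma \ref{GW} together with $b_0^3=0$ on $\Gamma$ gives $b|_{\Gamma}=(b_0\cdot\TP)\eta|_{\Gamma}$, so Proposition \ref{dhj} ($\|\eta\|_{4,\Gamma}\lesssim P(\|Q\|_{2,\Gamma})$) supplies the tangential regularity of $\eta$ on $\Gamma$ while $\delta-a$ is small in $L^{\infty}(\Gamma)$, and the whole contribution reduces to $\le\epsilon\|b\|_{s}$ plus quantities already inside $\PP_0+\int_0^t\PP$. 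Taking $T$ small to absorb all $\epsilon$-terms yields \eqref{divcurlvb00}--\eqref{divcurlvttbtt00}. The main obstacle is the curl step at the lowest regularity levels --- $\|v_{tt}\|_{1.5}$, $\|b_{tt}\|_{1.5}$, $\|v_{ttt}\|_{0}$, $\|b_{ttt}\|_{0}$: there one runs $H^{1.5}$ or $L^2$ energy estimates on the twice-time-differentiated vorticity pair, the Kato-Ponce splittings have almost no slack, and one must check that every top-order term either cancels through the $b_0$-transport structure or lands in $\int_0^t\PP$ rather than in an un-absorbable instantaneous $\PP$.
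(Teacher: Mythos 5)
Your proposal follows essentially the same route as the paper's proof: the Hodge-type decomposition, divergence handled through the Eulerian constraints plus smallness of $I-a$ (with the $a_{tt}$ coefficients producing exactly the $P(\|v\|_{2.5+\delta})$, $P(\|v\|_{2.5+\delta},\|b\|_{2.5+\delta})$ remainders), the curls handled by joint energy estimates on the pulled-back vorticity/current pair whose $b_0\cdot\p$ coupling cancels via $\dive b_0=0$ and $b_0\cdot N=0$ with Kato--Ponce commutator estimates (the paper delegates the $H^{2.5}$ level to Proposition 5.2 of \cite{luozhang2019MHD2.5} and carries out the time-differentiated versions with the operators $B_a$), and the magnetic boundary traces via $b=(b_0\cdot\TP)\eta$ on $\Gamma$ together with Proposition \ref{dhj}, keeping the $v^3$ traces for later. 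One caveat: for the boundary term you should rely on the identity $b^3|_{\Gamma}=(b_0\cdot\TP)\eta^3$ (which you also state, and which is what the paper uses, including its analogues $b_t=(b_0\cdot\TP)v$, $b_{tt}=(b_0\cdot\TP)v_t$ on $\Gamma$), not on $b_3=(\delta^{3\alpha}-a^{3\alpha})b_{\alpha}$ with smallness of $\delta-a$, since $a|_{\Gamma}$ only has $H^{2}(\Gamma)$ trace regularity and cannot yield the bound $\epsilon\|b\|_{3.5}$ for the $H^{3}(\Gamma)$ norm of that product.
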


The basic tool is Hodge's decomposition inequality, i.e. for any (smooth) vector field $X$, it holds 
$$\|X\|_s\lesssim \|X\|_0+\|\curl X\|_{s-1}+\|\dive X\|_{s-1}+\|(X\cdot N)\|_{s-1/2,\Gamma};$$
where $N$ is the outer unit normal vector to $\Gamma$.
This inequality will be used to control $v,v_t,v_{tt}$. 
In addition, we mention here that since $v^3=0$ on $\Gamma_0$, $v^3,v_t^3$ ,$v_{tt}^3$ also vanish on $\Gamma_0$.

\subsection{Div-Curl estimates of $v$ and $b$}\label{dcvb}
We adopt the following notations throughout the rest of this section. 
\nota 
Let $X$ be any smooth vector field. We define
\begin{align*}
A_a X=a^{\mu\alpha}\p_\mu X_\alpha,\q  A_I X=\di X= \delta^{\mu\alpha}\p_\mu X_\alpha,\\
(B_a X)^\gamma = \epsilon^{\gamma \alpha \beta} a^{\mu}_{\alpha}\p_\mu X_\beta,\q (B_I X)^\gamma= \curl X= \epsilon^{\gamma\alpha\beta} \p_\alpha X_\beta.
\end{align*}
Here, $\epsilon^{\gamma\alpha\beta}$ is the totally anti-symmetric symbol with $\epsilon^{123}=1$. 
In other words, we use $A_a$ and $B_a$ to denote the Eulerian divergence and curl operators, respectively. \\

From Hodge's decomposition inequality applied to $v$ and $b$, we have:
\begin{equation}
\begin{aligned}\label{divcurlvb0}
\|v\|_{3.5}&\lesssim\|v\|_0+\|\curl v\|_{2.5}+\|\dive v\|_{2.5}+\|v^3\|_{3,\Gamma}, \\
\|b\|_{3.5}&\lesssim\|b\|_0+\|\curl b\|_{2.5}+\|\dive b\|_{2.5}+\|b^3\|_{3,\Gamma}.
\end{aligned}
\end{equation}

First, the divergence control is easy.  From Lemma \ref{estimatesofa} (7), we know it holds in a sufficiently short time $[0,T]$ that
\begin{equation}
\begin{aligned}\label{divvb}
\|\dive v\|_{2.5}&=\|\underbrace{A_av}_{=0}+(A_I-A_a)v\|_{2.5}\lesssim \|I-a\|_{2.5}\|v\|_{3.5}\lesssim\epsilon\|v\|_{3.5}\\
\|\dive b\|_{2.5}&=\|\underbrace{A_ab}_{=0}+(A_I-A_a)b\|_{2.5}\lesssim \|I-a\|_{2.5}\|b\|_{3.5}\lesssim\epsilon\|b\|_{3.5}.
\end{aligned}
\end{equation}

The control of $\curl v$ and $\curl b$ follows exactly in the same way as Proposition 5.2 in \cite{luozhang2019MHD2.5}, just replacing $\p^{1.5}$ in that paper by $\p^{2.5}$. We have:
\begin{equation}\label{curlvb}
\|\curl v\|_{2.5}+\|\curl b\|_{2.5}\lesssim\epsilon(\|v\|_{3.5}+\|b\|_{3.5})+\PP_0+\int_0^t \PP.
\end{equation}

Now we are going to control $\|b^3\|_{3,\Gamma}$. Recall that $b=(b_0\cdot\p)\eta$ and $b_0^3=0$ on the boundary $\Gamma$, we know $b=(b_0\cdot\TP)\eta$ on the boundary $\Gamma$. Invoking Lemma \ref{dhj} and trace lemma, we are able to get
\begin{equation}\label{bbdry}
\|b^3\|_{3,\Gamma}= \|b_0\cdot\TP\eta\|_{3,\Gamma}\lesssim\|b_0\|_{3,\Gamma}\|\eta\|_{4,\Gamma}\lesssim\|b_0\|_{3,\Gamma}\|Q\|_{2,\Gamma}\lesssim P(\|b_0\|_{3.5},\|Q(0)\|_{2.5})+\int_0^t\|Q_t\|_{2.5}.
\end{equation}

Combining \eqref{divcurlvb0}, \eqref{divvb}, \eqref{curlvb}, \eqref{bbdry}, and absorbing the $\epsilon$-term to LHS, we conclude that 
\begin{equation}\label{divcurlvb}
\begin{aligned}
\|v\|_{3.5}&\lesssim\PP_0+\int_0^t \PP +\|v^3\|_{3,\Gamma};\\
\|b\|_{3.5}&\lesssim\PP_0+\int_0^t \PP.
\end{aligned}
\end{equation}

\subsection{Div-Curl estimates of $v_t$ and $b_t$}\label{dcvtbt}
Again, from Hodge's decomposition inequality applied to $v_t$ and $b_t$, we have:
\begin{equation}
\begin{aligned}\label{divcurlvtbt0}
\|v_t\|_{2.5}&\lesssim\|v_t\|_0+\|\curl v_t\|_{1.5}+\|\dive v_t\|_{1.5}+\|v_t^3\|_{2,\Gamma}; \\
\|b_t\|_{2.5}&\lesssim\|b_t\|_0+\|\curl b_t\|_{1.5}+\|\dive b_t\|_{1.5}+\|b_t^3\|_{2,\Gamma},
\end{aligned}
\end{equation}where $ \mathcal{T}$ is any unit tangential vector to $\Gamma$.

To control the divergence, we again invoke $A_a v=A_a b=0$ to get:
$$\dive v_t=A_a v_t+(A_I-A_a)v_t=\p_t(\underbrace{A_a v}_{=0})-A_{a_t}v+(A_I-A_a)v_t=-A_{a_t}v+(A_I-A_a)v_t;$$
$$\dive b_t=A_a b_t+(A_I-A_a)b_t=\p_t(\underbrace{A_a b}_{=0})-A_{a_t}b+(A_I-A_a)b_t=-A_{a_t}b+(A_I-A_a)b_t.$$ 
Therefore, one can use the multiplicative Sobolev inequality and Lemma \ref{estimatesofa} to get
\begin{equation}
\begin{aligned}\label{divvt}
\|\dive v_t\|_{1.5}&=\|A_{a_t}v\|_{1.5}+\|(A_I-A_a)v_t\|_{1.5}\\
&\lesssim\|{a_t}\|_{1.5}\|v\|_{2}+\|I-a\|_{1.5}\|v_t\|_{2.5}\\
&\lesssim\|\eta\|_{2.5}^4\|v\|_{2.5}\|v\|_{2}+\epsilon \|v_t\|_{2.5}\\
&\lesssim P(\|v_0\|_{2.5})+\int_0^t P(\|v_t(s)\|_{2.5})ds+\epsilon \|v_t\|_{2.5},
\end{aligned}
\end{equation} and similarly,
\begin{equation}\label{divbt}
\|\dive b_t\|_{1.5}\lesssim P(\|b_0\|_{2.5})+\int_0^t P(\|b_t(s)\|_{2.5})ds+\epsilon \|b_t\|_{2.5}.
\end{equation}

Now we start to control $\curl v_t$ and $\curl b_t$.  First, we have
\begin{equation}\label{curlvtbt0}
\begin{aligned}
\|\curl v_t\|_{1.5}&\leq \|B_a v_t\|_{1.5}+\|(B_I- B_a)v_t\|_{1.5}\lesssim \|B_a v_t\|_{1.5}+\epsilon \|v_t\|_{2.5}\\
\|\curl b_t\|_{1.5}&\leq \|B_a b_t\|_{1.5}+\|(B_I- B_a)b_t\|_{1.5}\lesssim\|B_a b_t\|_{1.5}+\epsilon \|b_t\|_{2.5}.
\end{aligned}
\end{equation}
The control of $B_a v_t$ and $B_a b_t$ is slightly different from that of $B_a v$ and $B_a b$. We start with the first equation of \eqref{MHDL} $$v_t^{\alpha}=(b_0\cdot\p)^2\eta^{\alpha}-a^{\nu\alpha}\p_{\nu}Q.$$ Taking the time derivative at first, and then apply $B_a$ on both sides, we get
$$\p_t(B_a v_t)_{\lambda}-(B_a(b_0\cdot \p)^2 v)_{\lambda}=\underbrace{(B_{a_t} v)_{\lambda}-\epsilon_{\lambda\tau\alpha}a^{\mu\tau}\p_{\mu}(a^{\nu\alpha}_t\p_{\nu}Q)}_{G^*}.$$
Commuting $(b_0\cdot\p)$ with $B_a$ on LHS, we have 
$$\p_t(B_a v_t)_{\lambda}-(b_0\cdot \p)(B_a(b_0\cdot \p) v)_{\lambda}=G^*+[B_a,b_0\cdot\p]b_t.$$
Taking $\p^{1.5}$ on both sides and commuting $b_0\cdot\p$ with $B_a$, we get the evolution equation of $\curl v_t$:
\begin{equation}\label{curlvtbteq}
\p_t\p^{1.5}(B_av_t)-(b_0\cdot\p)\p^{1.5}(B_a b_t)=\underbrace{\p^{1.5}(G^*+[B_a,b_0\cdot\p]b_t)+[\p^{1.5},b_0\cdot\p]B_a b_t}_{F^*}. 
\end{equation} Here, we use the second equation of \eqref{MHDL} and \eqref{GW2}, i.e., $b_t=(b_0\cdot\p) v$. Next we again mimic the proof of Proposition 5.2 in \cite{luozhang2019MHD2.5} and get 
\begin{equation}
\begin{aligned}\label{curlvtbt1}
\frac{1}{2}\frac{d}{dt}\int_{\Omega}|\p^{1.5} B_a v_t|^2+|\p^{1.5} B_a b_t|^2dy
=\underbrace{\int_{\Omega}F^*\cdot \p^{1.5}B_a v_tdy}_{\mathcal{B}_1^*}\\
+\underbrace{\int_{\Omega}\p^{1.5} (B_a b_t)\cdot [\p^{1.5}B_a,b_0\cdot\p]v_tdy }_{\mathcal{B}_2^*}\\
~~~~+\underbrace{\int_{\Omega}\p^{1.5}(B_a b_t)^{\lambda}\p^{1.5}(\epsilon_{\lambda\tau\alpha}a^{\mu\tau}_t\p_{\mu}b_t^{\alpha})dy}_{\mathcal{B}_3^*}.
\end{aligned}
\end{equation}
$\mathcal{B}_3^*$ can be controlled directly by the multiplicative Sobolev inequality:
\begin{equation}\label{B3*}
\begin{aligned}
\mathcal{B}_3^*&\lesssim \|\p^{1.5}B_a b_t\|_{0}\|\p^{1.5}(\epsilon_{\lambda\tau\alpha}a^{\mu\tau}_t\p_{\mu}b_t^{\alpha})\|_0\\
&\lesssim \|a\|_2\|b_t\|_{2.5}\|a_t\|_2\|b_t\|_{2.5}\lesssim \|v\|_{3}\|b_t\|_{2.5}^2.
\end{aligned}
\end{equation}
To control $\mathcal{B}_2^*$, it suffices to control $\|[\p^{1.5}B_a,b_0\cdot\p]v_t\|_{L^2}$. First we simplify the commutator:
\begin{equation}\label{B2*0}
\begin{aligned}
[\p^{1.5}B_a,b_0\cdot\p]v_t&=\epsilon_{\lambda\tau\alpha}\left(\p^{1.5}(a^{\mu\tau}\p_{\mu}(b_0^{\nu}\p_{\nu}v^{\alpha}_t))-b_0^{\nu}\p_{\nu}\p^{1.5}(a^{\mu\tau}\p_{\mu}v^{\alpha}_t)\right) \\
&=\epsilon_{\lambda\tau\alpha}\underbrace{\left(\p^{1.5}(a^{\mu\tau}\p_{\mu}(b_0^{\nu}\p_{\nu}v^{\alpha}_t))-\p_{\nu}\p^{1.5}(b_0^{\nu}a^{\mu\tau}\p_{\mu}v^{\alpha}_t)\right)}_{\mathcal{B}_{21}^*}  \\
&~~~~+\epsilon_{\lambda\tau\alpha}\underbrace{\left(\p_{\nu}\p^{1.5}(b_0^{\nu}a^{\mu\tau}\p_{\mu}v^{\alpha}_t)-b_0^{\nu}\p_{\nu}\p^{1.5}(a^{\mu\tau}\p_{\mu}v^{\alpha}_t)\right)}_{\mathcal{B}_{22}^*}.
\end{aligned}
\end{equation}
For $\mathcal{B}_{22}^*$, we need to invoke the refind Kato-Ponce type commutator estimate \eqref{lidong2} because $H^{1.5}(\Omega)\nsubseteq L^{\infty}(\Omega)$.
\begin{equation}\label{B22*}
\|\mathcal{B}_{22}^*\|_{L^2}\lesssim \|b_0\|_{W^{1.5,3}}\|a^{\mu\tau}\p_{\mu}v_t^{\alpha}\|_{L^6}+\|\p b_0\|_{L^{\infty}}\|a^{\mu\tau}\p_{\mu}v_t^{\alpha}\|_{1.5}\lesssim \|b_0\|_3\|v_t\|_{2.5}.
\end{equation}
For $\mathcal{B}_{21}^*$, we have 
\begin{equation}\label{B21*0}
\begin{aligned}
\mathcal{B}_{21}^*&=\epsilon_{\lambda\tau\alpha}\p^{1.5}(a^{\mu\tau}\p_{\mu}(b_0^{\nu}\p_{\nu}v^{\alpha}_t))-\p_{\nu}(b_0^{\nu}a^{\mu\tau}\p_{\mu}v^{\alpha}_t))\\
&=\epsilon_{\lambda\tau\alpha}\p^{1.5}\left(a^{\mu\tau}\p_{\mu}b_0^{\nu}\p_{\nu}v^{\alpha}_t+a^{\mu\tau}b_0^{\nu}\p_{\mu}\p_{\nu}v^{\alpha}_t-b_0^{\nu}\p_{\nu}a^{\mu\tau}\p_{\mu}v^{\alpha}_t-b_0^{\nu}a^{\mu\tau}\p_{\mu}\p_{\nu}v^{\alpha}_t\right) \\
&=\epsilon_{\lambda\tau\alpha}\p^{1.5}\left(a^{\mu\tau}\p_{\mu}b_0^{\nu}\p_{\nu}v^{\alpha}_t+b_0^{\nu}\p_{\beta}\p_{\nu}\eta_{\gamma}a^{\mu\gamma}a^{\beta\tau}\p_{\mu}v^{\alpha}_t \right)\\
&=\epsilon_{\lambda\tau\alpha}\p^{1.5}(a^{\mu\tau}\p_{\mu}b_0^{\nu}\p_{\nu}v^{\alpha}_t+\p_{\beta}((b_0\cdot\p)\eta_{\gamma})a^{\mu\gamma}a^{\beta\tau}\p_{\mu}v^{\alpha}_t-\p_\beta b_0^{\mu} a^{\beta\tau}\p_\mu v^\alpha_t ),
\end{aligned}
\end{equation}where we used Lemma \ref{estimatesofa} (8) to expand $b_0^\nu \p_\nu a^{\mu\tau}\p_\mu v^\alpha$ in the second line and $\p_{\nu}\eta_{\gamma}a^{\mu\gamma}=\delta^{\mu}_{\nu}$. 
Therefore, invoking $b=(b_0\cdot\p)\eta$ and the multiplicative Sobolev inequality again, one can get:
\begin{equation}\label{B21*}
\|\mathcal{B}_{21}^*\|_{L^2}\lesssim\|b_0\|_3\|v_t\|_{2.5}.
\end{equation}
It remains to control $\mathcal{B}_1^*$, specifically, $\|F^*\|_{L^2}$. The two commutator terms can be controlled in the same way as $\mathcal{B}_{21}^*$ and straightforward computation (we omit the computation details):
\begin{equation}\label{F*1}
\|[\p^{1.5},b_0\cdot\p]B_a b_t\|_0+\|\p^{1.5}([B_a,b_0\cdot\p]b_t)\|_0\lesssim\|b_0\|_3\|v_t\|_{2.5}.
\end{equation} For $G^*=(B_{a_t} v)_{\lambda}-\epsilon_{\lambda\tau\alpha}a^{\mu\tau}\p_t(a^{\nu\alpha}\p_{\nu}Q)$, the multiplicative Sobolev inequality combined with Lemma \eqref{estimatesofa} yields that
\begin{equation}\label{G*}
\|B_{a_t}v\|_{1.5}+\|a^{\mu\tau}\p_{\mu}(a^{\nu\alpha}_t\p_{\nu}Q)\|_{1.5}\lesssim\|v\|_3(\|v\|_{3.5}+\|Q\|_{3.5})+\|v\|_{3.5}\|Q\|_{3}.
\end{equation}
Combining \eqref{curlvtbt0}, \eqref{curlvtbt1}, \eqref{B3*}, \eqref{B22*}, \eqref{B21*}, \eqref{F*1} and \eqref{G*}, and absorbing the $\epsilon$-term to LHS we have:
\begin{equation}\label{curlvtbt}
\|\curl v_t\|_{1.5}+\|\curl b_t\|_{1.5}\lesssim \PP_0+\int_0^t\PP.
\end{equation}

The boundary term $\|b_t^3\|_{2,\Gamma}$ can be similarly controlled as $\|b^3\|_{3,\Gamma}$:
\begin{equation}\label{btbdry}
\|b_t^3\|_{2,\Gamma}=\|b_0\cdot\TP \p_t\eta\|_{2,\Gamma}\lesssim\|b_0\|_{2,\Gamma}\|v\|_{3,\Gamma}\lesssim P(\|b_0\|_{2.5},\|v_0\|_{3.5})+\int_0^t \|v_t\|_{2.5}
\end{equation}

Summing up \eqref{divvt}, \eqref{divbt}, \eqref{curlvtbt} and \eqref{btbdry}, then absorbing the $\epsilon$-term to LHS, we have
\begin{equation}\label{divcurlvtbt}
\begin{aligned}
\|v_t\|_{2.5}&\lesssim\PP_0+\int_0^t \PP +\|v_t^3\|_{2,\Gamma};\\
\|b_t\|_{2.5}&\lesssim\PP_0+\int_0^t \PP.
\end{aligned}
\end{equation}

\subsection{Div-Curl estimates of $v_{tt}$ and $b_{tt}$}

Again, from Hodge's decomposition inequality applied to $v_{tt}$ and $b_{tt}$, we have:
\begin{equation}
\begin{aligned}\label{divcurlvttbtt0}
\|v_{tt}\|_{1.5}&\lesssim\|v_{tt}\|_0+\|\curl v_{tt}\|_{0.5}+\|\dive v_{tt}\|_{0.5}+\|v_{tt}^3\|_{1,\Gamma}; \\
\|b_{tt}\|_{1.5}&\lesssim\|b_{tt}\|_0+\|\curl b_{tt}\|_{0.5}+\|\dive b_{tt}\|_{0.5}+\|b_{tt}\cdot\mathcal{T}\|_{1,\Gamma},
\end{aligned}
\end{equation}where $ \mathcal{T}$ is any unit tangential vector to $\Gamma$.
To control the divergence, we again invoke $A_a v=A_a b=0$ to get:
$$\dive v_{tt}=A_a v_{tt}+(A_I-A_a)v_{tt}=\p_{tt}(\underbrace{A_a v}_{=0})-A_{a_{tt}}v-2A_{a_t}v_t+(A_I-A_a)v_{tt};$$
$$\dive b_{tt}=A_a b_{tt}+(A_I-A_a)b_{tt}=\p_{tt}(\underbrace{A_a b}_{=0})-A_{a_{tt}}b-2A_{a_t}b_t+(A_I-A_a)b_{tt}.$$ 
Therefore, one can use the multiplicative Sobolev inequality and Lemma \ref{estimatesofa} to get
\begin{equation}
\begin{aligned}\label{divvtt}
\|\dive v_{tt}\|_{0.5}&\leq\|A_{a_{tt}}v\|_{0.5}+2\|A_{a_t}v_t\|_{0.5}+\|I-a\|_2\|v_{tt}\|_{1.5}\\
&\lesssim \|a_{tt}\|_{0.5}\|v\|_{2.5+\delta}+\|a_t\|_{L^{\infty}}\|v_t\|_{1.5}+\epsilon\|v_{tt}\|_{1.5}\\
&\lesssim \|\p v\|_{0.5}\|v\|_{2.5+\delta}^2+\|\p v_t\|_{0.5}\|v\|_{2.5+\delta}+\|v\|_{2.5+\delta}\|v_t\|_{1.5}+\epsilon\|v_{tt}\|_{1.5}\\
&\lesssim P(\| v\|_{2.5+\delta})(\|v\|_{1.5}+\|v_t\|_{1.5})+\epsilon\|v_{tt}\|_{1.5}
\end{aligned}
\end{equation} and similarly,
\begin{equation}\label{divbtt}
\|\dive b_{tt}\|_{0.5}\lesssim P(\| v\|_{2.5+\delta},\|b\|_{2.5+\delta})(\|v\|_{1.5}+\|v_t\|_{1.5}+\|b_t\|_{1.5})+\epsilon\|b_{tt}\|_{1.5},
\end{equation}where $\delta>0$ can be arbitratily small.

The boundary term $\|b_{tt}^3\|_{1,\Gamma}$ is again controlled in the same way as \eqref{btbdry}
\begin{equation}\label{bttbdry}
\|b_{tt}^3\|_{1,\Gamma}=\|b_0\cdot\TP v_{t}\|_{1,\Gamma}\lesssim P(\|b_0\|_{2.5},\|v_{t}(0)\|_{1.5})+\int_0^t \|v_{tt}\|_{1.5}.
\end{equation}

Apart from $\|v_{tt}^3\|_{1,\Gamma}$, it remains to control $\curl v_{tt}$ and $\curl b_{tt}$.
We have:
\begin{equation}
\begin{aligned}\label{curlvttbtt0}
\|\curl v_{tt}\|_{0.5}&=\|B_a v_{tt}+(B_I-B_a)v_{tt}\|_{0.5}\leq\|B_a v_{tt}\|_{0.5}+\epsilon\|v_{tt}\|_{1.5}\\
\|\curl b_{tt}\|_{0.5}&=\|B_a b_{tt}+(B_I-B_a)b_{tt}\|_{0.5}\leq\|B_a b_{tt}\|_{0.5}+\epsilon\|b_{tt}\|_{1.5}.
\end{aligned}
\end{equation}
Applying the $B_a$ operator on both sides of the first equation in \eqref{MHDL}, we have $$B_a v_t=B_a (b_0\cdot\p)^2\eta.$$ Then taking time derivative twice, we get 
\begin{equation}\label{curlvttbtt00}
\p_t(B_a v_{tt})- B_a(b_0\cdot\p)^2 v_t=G^{**},
\end{equation} where we used \eqref{GW2} to derive 
\begin{equation}\label{G**}
G^{**}:=-B_{a_{tt}}v_t-B_{a_t}v_{tt}+B_{a_{tt}}(b_0\cdot\p)b+2 B_{a_t}(b_0\cdot\p)b_t.
\end{equation}
Commuting $(b_\cdot\p)$ with $B_a$ on LHS of \eqref{curlvttbtt00}, taking $\p^{0.5}$ derivative and then commuting it with $b_0\cdot\p$, we get the evolution equation of $B_a v_{tt}$ and $B_ab_{tt}$ with the help of \eqref{GW2}:
\begin{equation}\label{curlvttbtt000}
\p_t(\p^{0.5}B_a v_{tt})- (b_0\cdot\p)(\p^{0.5}B_a b_{tt})=\underbrace{\p^{0.5}(G^{**}+[B_a,b_0\cdot\p]b_{tt})+[\p^{0.5},b_0\cdot\p](B_a b_{tt})}_{=:F^{**}}.
\end{equation}
Analogous to \eqref{curlvtbt1}, we can derive the following energy identity:
\begin{equation}
\begin{aligned}
\frac{1}{2}\frac{d}{dt}\int_{\Omega}|\p^{0.5} B_a v_{tt}|^2+|\p^{0.5} B_a b_{tt}|^2dy
=\underbrace{\int_{\Omega}F^{**}\cdot \p^{0.5}B_a v_{tt}dy}_{\mathcal{B}_1^{**}}\\
+\underbrace{\int_{\Omega}\p^{0.5} (B_a b_{tt})\cdot [\p^{0.5}B_a,b_0\cdot\p]v_tdy }_{\mathcal{B}_2^{**}}+\underbrace{\int_{\Omega}\p^{0.5}(B_a b_{tt})^{\lambda}\p^{0.5}(\epsilon_{\lambda\tau\alpha}a^{\mu\tau}_t\p_{\mu}b_{tt}^{\alpha})dy}_{\mathcal{B}_3^{**}}.
\end{aligned}
\end{equation}
The multiplicative Sobolev inequality together with Lemma \ref{estimatesofa} yields that 
\begin{equation}\label{B3**}
\mathcal{B}_3^{**}\lesssim \|b_{tt}\|_{1.5}\|b_0\|_{1.5}\|v_t\|_{2.5}\|v\|_{2}.
\end{equation}
To control  $\mathcal{B}_2^{**}$, it suffices to control $\|[\p^{0.5}B_a,b_0\cdot\p]v_{tt}\|_{L^2}$. Analogous to \eqref{B2*0}, we have
\begin{equation}\label{B2**0}
\begin{aligned}
[\p^{0.5}B_a,b_0\cdot\p]v_{tt}&=\epsilon_{\lambda\tau\alpha}\underbrace{\left(\p^{0.5}(a^{\mu\tau}\p_{\mu}(b_0^{\nu}\p_{\nu}v^{\alpha}_{tt}))-\p_{\nu}\p^{0.5}(b_0^{\nu}a^{\mu\tau}\p_{\mu}v^{\alpha}_{tt})\right)}_{\mathcal{B}_{21}^{**}}  \\
&~~~~+\epsilon_{\lambda\tau\alpha}\underbrace{\left(\p_{\nu}\p^{0.5}(b_0^{\nu}a^{\mu\tau}\p_{\mu}v^{\alpha}_{tt})-b_0^{\nu}\p_{\nu}\p^{0.5}(a^{\mu\tau}\p_{\mu}v^{\alpha}_{tt})\right)}_{\mathcal{B}_{22}^{**}}.
\end{aligned}
\end{equation}
For $\mathcal{B}_{22}^{**}$, we need to invoke the refind Kato-Ponce type commutator estimate as in \eqref{B22*}
\begin{equation}\label{B22**}
\|\mathcal{B}_{22}^*\|_{L^2}\lesssim \|b_0\|_{W^{1.5,6}}\|a^{\mu\tau}\p_{\mu}v_{tt}^{\alpha}\|_{L^3}+\|\p b_0\|_{L^{\infty}}\|a^{\mu\tau}\p_{\mu}v_{tt}^{\alpha}\|_{1.5}\lesssim \|b_0\|_3\|v_{tt}\|_{1.5}.
\end{equation}
For $\mathcal{B}_{21}^{**}$, we have 
\begin{equation}\label{B21**0}
\mathcal{B}_{21}^{**}=\epsilon_{\lambda\tau\alpha}\p^{0.5}(a^{\mu\tau}\p_{\mu}b_0^{\nu}\p_{\nu}v^{\alpha}_{tt}+\p_{\beta}((b_0\cdot\p)\eta_{\gamma})a^{\mu\gamma}a^{\beta\tau}\p_{\mu}v^{\alpha}_{tt}-\p_\beta b_0^{\mu} a^{\beta\tau}\p_\mu v^\alpha_{tt} ),
\end{equation}
Therefore, invoking $b=(b_0\cdot\p)\eta$ and the multiplicative Sobolev inequality again, one can get:
\begin{equation}\label{B21**}
\|\mathcal{B}_{21}^*\|_{L^2}\lesssim\|b_0\|_3\|v_{tt}\|_{1.5}.
\end{equation}
It remains to control $\mathcal{B}_1^{**}$, specifically, $\|F^{**}\|_{L^2}$. The two commutator terms can be controlled by $\|b_0\|_3\|b_{tt}\|_{1.5}$ in the same way as $\mathcal{B}_{1}^*$. Therefore it remains to control $\|G^{**}\|_{0.5}$, which is directly controlled by using multiplicative Sobolev inequality
\begin{equation}\label{G**}
\|G^{**}\|_{0.5}\lesssim\|a_{tt}\|_1(\|v_t\|_2+\|b_0\|_3\|b\|_3)+\|a_t\|_2(\|v_{tt}\|_{1.5}\|b_0\|_3\|b_t\|_{2.5})\lesssim\PP.
\end{equation}
Combining \eqref{curlvtbt0}, \eqref{curlvtbt1}, \eqref{B3**}, \eqref{B22**}, \eqref{B21**}, and\eqref{G**}, and absorbing the $\epsilon$-term to LHS we have:
\begin{equation}\label{curlvttbtt}
\|\curl v_{tt}\|_{0.5}+\|\curl b_{tt}\|_{0.5}\lesssim \PP_0+\int_0^t\PP+\epsilon(\|v_{tt}\|_{1.5}+\|b_{tt}\|_{1.5}).
\end{equation}

Summing up \eqref{divvtt}, \eqref{divbtt}, \eqref{bttbdry} and \eqref{curlvttbtt}, then absorbing the $\epsilon$-term to LHS, and finally using Young's inequality and Jensen's inequality, we have
\begin{equation}
\begin{aligned}\label{divcurlvttbtt}
\|v_{tt}\|_{1.5}&\lesssim\PP_0+\int_0^t \PP~ds +\|v_{tt}^3\|_{1,\Gamma}+P(\| v\|_{2.5+\delta})\underbrace{(\|v\|_{1.5}+\|v_t\|_{1.5})}_{\lesssim\PP_0+\int_0^t\PP}\\
&\lesssim\PP_0+\int_0^t \PP~ds +\|v_{tt}^3\|_{1,\Gamma}+P(\| v\|_{2.5+\delta});\\
\|b_{tt}\|_{1.5}&\lesssim\PP_0+\int_0^t \PP~ds+P(\| v\|_{2.5+\delta},\|b\|_{2.5+\delta})\underbrace{(\|v\|_{1.5}+\|v_t\|_{1.5}+\|b_t\|_{1.5})}_{\lesssim\PP_0+\int_0^t\PP}\\
&\lesssim\PP_0+\int_0^t \PP~ds+P(\| v\|_{2.5+\delta},\|b\|_{2.5+\delta}),
\end{aligned}
\end{equation}where $\delta>0$ can be arbitratily small.

So far, we have derived all the div-curl estimates as shown in Proposition \ref{dc}. However, the control of the boundary terms containing $v$ and its time derivatives  as well as the lower order terms (i.e., $\|v\|_{2.5+\delta}$ and $\|b\|_{2.5+\delta}$ are still needed. This will be done in Section \ref{bdryofv} and Section \ref{closing}, receptively.

\section{Boundary Estimates of $v$}\label{bdryofv}
In this chapter we focus on the boundary estimates of $v^3,v_t^3,v_{tt}^3$ with the help of boundary elliptic estimates and the comparison with tangential projection. The conclusion is that 
\begin{prop}[Boundary estimates of $v,v_t,v_{tt}$]\label{bdrybdry}
\begin{equation}\label{v30}
\|v^3\|_{3,\Gamma}\lesssim \PP_0+\PP\int_0^t\PP+P(\|v\|_{2.5+\delta}).
\end{equation}

\begin{equation}\label{vt30}
\|v_t ^3\|_{2,\Gamma}\lesssim\epsilon(\|v_t\|_{2.5}+\|\TP^2 v_t\|_{0,\Gamma})+\|\TP^2(\Pi v_t)\|_{0,\Gamma}+\PP_0+\int_0^t \PP;
\end{equation}

\begin{equation}\label{vtt30}
\|v_{tt}^3\|_{1,\Gamma}\lesssim\epsilon\left(\sum_{\alpha=1}^3\|\TP v_{tt}^{\alpha}\|_{0,\Gamma}+\sum_{\alpha=1}^3\| v_{tt}^{\alpha}\|_{1}\right)+\|\TP(\Pi v_{tt})\|_{0,\Gamma}.
\end{equation}
\end{prop}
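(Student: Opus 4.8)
\emph{Proof proposal.} All three bounds are boundary estimates but are obtained by two different mechanisms. For $v^3$ --- the highest time-regularity for which a usable elliptic equation is available --- the plan is to differentiate the fifth equation of \eqref{MHDL} once in $t$ and run a boundary elliptic estimate on $\Gamma=\T^2$: from $a^{\mu\alpha}N_\mu Q=-\sigma\sqrt g\,\Delta_g\eta^\alpha$ on $\Gamma$, applying $\p_t$, substituting the identity for $\p_t(\sqrt g\,\Delta_g\eta^\alpha)$ recorded in Section \ref{section 1.3} together with $\Pi^\alpha_\lambda=\delta^\alpha_\lambda-g^{kl}\p_k\eta^\alpha\p_l\eta_\lambda$ and $\p_t\eta=v$, taking $\alpha=3$, and using $\Pi^3_\lambda\p_j v^\lambda=\p_j v^3-g^{kl}\p_k\eta^3\p_l\eta_\lambda\p_j v^\lambda$ to split off the Laplace--Beltrami part, one obtains schematically $\sigma\sqrt g\,\Delta_g v^3=-a^{33}Q_t-a_t^{33}Q+(\text{terms carrying a factor }\p_k\eta^3)$ on $\Gamma$, with $a^{33}=a^{\mu3}N_\mu$. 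The crucial observation is that $\p_k\eta^3=\p_k(\eta^3-y_3)$ is small in $L^\infty(\Gamma)$ for short time (trace of Lemma \ref{estimatesofa}(7)) and obeys $\|\p\eta^3\|_{\cdot,\Gamma}\lesssim\int_0^t\|v^3\|_{\cdot,\Gamma}$ by the fundamental theorem of calculus. For $v_t^3$ and $v_{tt}^3$, where $Q_{tt},Q_{ttt}$ are not controlled at the required order, I would instead use the pointwise identity $X^3-(\Pi X)^3=g^{kl}\p_k\eta^3\p_l\eta_\lambda X^\lambda$ from Section \ref{section 1.3} (cf. Lemma \ref{tgproj}) to trade the normal trace for $\Pi v_t$, $\Pi v_{tt}$, which are left to be absorbed by the tangential energy estimates of Section \ref{tang}.

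\textbf{Closing the estimate for $v^3$.} Since $\Delta_g$ is second order and uniformly elliptic with metric close to the flat one on $\T^2$, the boundary elliptic estimate gives $\|v^3\|_{3,\Gamma}\lesssim\|\sqrt g\,\Delta_g v^3\|_{1,\Gamma}+\|v^3\|_{0,\Gamma}$. I would control $\|a^{33}Q_t\|_{1,\Gamma}$ and $\|a_t^{33}Q\|_{1,\Gamma}$ by the trace theorem, Lemma \ref{estimatesofa} and Proposition \ref{QQ} ($\|Q\|_{3.5},\|Q_t\|_{2.5}\lesssim\PP$), and control the geometric terms by Leibniz, the multiplicative Sobolev inequality and Proposition \ref{dhj}: derivatives falling on $v$ produce $\epsilon\|v\|_{3.5}$ (the small factor $\|\p\eta^3\|_{L^\infty(\Gamma)}$ times $\|v\|_{3,\Gamma}\lesssim\|v\|_{3.5}$), while derivatives falling on $\eta$ produce bounded $R(\TP\eta)$-factors and $\|\eta^3-y_3\|_{\cdot,\Gamma}\lesssim\int_0^t\|v^3\|_{\cdot,\Gamma}$ times $\|v\|_{3.5}$, i.e. a term of the form $\PP\int_0^t\PP$. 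Finally I would insert the div-curl bound $\|v\|_{3.5}\lesssim\PP_0+\int_0^t\PP+\|v^3\|_{3,\Gamma}$ of Proposition \ref{dc} to absorb $\epsilon\|v\|_{3.5}$ into the left side, peel off the genuinely lower-order contributions into $P(\|v\|_{2.5+\delta})$, and rewrite the remaining $\PP$-sized pieces through $Q,\eta=(\text{data})+\int_0^t\p_t(\cdot)$, arriving at \eqref{v30}.

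\textbf{The estimates for $v_t^3$ and $v_{tt}^3$.} Apply the identity $X^3-(\Pi X)^3=g^{kl}\p_k\eta^3\p_l\eta_\lambda X^\lambda$ with $X=v_t$, resp. $X=v_{tt}$. For the projection term the flat-torus norm splitting $\|f\|_{2,\Gamma}\sim\|\TP^2 f\|_{0,\Gamma}+\|f\|_{0,\Gamma}$ (resp. $\|f\|_{1,\Gamma}\sim\|\TP f\|_{0,\Gamma}+\|f\|_{0,\Gamma}$) reduces $\|(\Pi v_t)^3\|_{2,\Gamma}$ to $\|\TP^2(\Pi v_t)\|_{0,\Gamma}$ plus an $\epsilon\|v_t\|_{2.5}$-term and lower-order terms (and $\|(\Pi v_{tt})^3\|_{1,\Gamma}$ to $\|\TP(\Pi v_{tt})\|_{0,\Gamma}$ plus $\epsilon(\sum_\alpha\|\TP v_{tt}^\alpha\|_{0,\Gamma}+\sum_\alpha\|v_{tt}^\alpha\|_1)$). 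For the error term $g^{kl}\p_k\eta^3\p_l\eta_\lambda X^\lambda$, expand by Leibniz and estimate each piece on $\T^2$ using Proposition \ref{dhj} and the smallness of $\p\eta^3=\p(\eta^3-y_3)$: derivatives on $X$ give a small multiple of $\|\TP^2 v_t\|_{0,\Gamma}$ (resp. of $\sum_\alpha\|\TP v_{tt}^\alpha\|_{0,\Gamma}$), derivatives on $\eta$ give bounded $R(\TP\eta)$-factors times $\|v_t\|_{2,\Gamma}\lesssim\|v_t\|_{2.5}$ (resp. $\|v_{tt}\|_{1,\Gamma}\lesssim\|v_{tt}\|_1$) with a small prefactor, plus --- only in the $v_t$ case --- a $\PP_0+\int_0^t\PP$ contribution from $\|\eta^3-y_3\|_{\cdot,\Gamma}\lesssim\int_0^t\|v^3\|_{\cdot,\Gamma}$; for $v_{tt}$, where at most one derivative is distributed, every error term can be made $\epsilon$-small for short time, which is why \eqref{vtt30} carries no time-integral. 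Collecting these gives \eqref{vt30} and \eqref{vtt30}.

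\textbf{Main obstacle.} The hard part is the $v^3$ estimate: one must cleanly extract $\sigma\sqrt g\,\Delta_g v^3$ as the principal part of the time-differentiated surface-tension relation and then verify that every remaining term falls into exactly one admissible class --- an $\epsilon$-small multiple of $\|v\|_{3.5}$ absorbable only after invoking Proposition \ref{dc}, a term bounded through Proposition \ref{QQ}, or a time-integrated geometric error $\PP\int_0^t\PP$ --- all while respecting the borderline product estimates on the two-dimensional torus (where $H^1(\T^2)\not\hookrightarrow L^\infty$), so that the resulting bound is compatible with the Gronwall scheme. For $v_t^3$ and $v_{tt}^3$ the projection trick makes the algebra lighter, but one still has to check that the tangential-projection norms left behind are precisely the quantities $\TP^2(\Pi v_t)$ and $\TP(\Pi v_{tt})$ that the tangential energy estimates of Section \ref{tang} are designed to control.
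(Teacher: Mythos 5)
Your proposal follows essentially the same route as the paper: for $v^3$ you time-differentiate the surface-tension boundary condition and apply boundary elliptic regularity for $\Delta_g$ on $\Gamma$ (with Proposition \ref{dhj}, the pressure estimates of Proposition \ref{QQ}, and the vanishing of $\TP\eta^3,\TP^2\eta^3$ at $t=0$ producing the time integrals), while for $v_t^3$ and $v_{tt}^3$ you use the comparison identity $X^3-(\Pi X)^3=g^{kl}\p_k\eta^3\p_l\eta_\lambda X^\lambda$, which is exactly Lemma \ref{tgproj} that the paper invokes with $X=v_t$, $v_{tt}$. The only cosmetic differences are that you unpack that lemma instead of citing it, and that in the $v^3$ step you generate an $\epsilon\|v\|_{3.5}$ error and absorb it through Proposition \ref{dc}, whereas the paper writes the $\p\eta^3$-factors as $\int_0^t \p v^3$ directly and so lands on the $\PP\int_0^t\PP$ form without that extra absorption.
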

\subsection{Control of $v^3$: Boundary elliptic estimates}
From \eqref{divcurlvb00}, we still have to control $\|v^3\|_{3,\Gamma}$. Differentiating the surface tension equation $a^{\mu\alpha}N_{\mu}Q+\sigma\sqrt{g}\Delta_g \eta^{\alpha}=0$ in time and let $\alpha=3$, we have:
\begin{equation}\label{v3elliptic}
\sqrt{g}g^{ij}-\sqrt{g}g^{ij}\Gamma_{ij}^k\p_kv^3=\p_t(\sqrt{g}g^{ij})\p_i\p_i\eta^3-\p_t(\sqrt{g}g^{ij}\Gamma_{ij}^k)\p_k\eta^3-\frac{1}{\sigma}\p_t(a^{\mu 3}Q)N_{\mu},\q \text{on}\,\,\Gamma.
\end{equation}
Invoking Proposition \ref{etabdry}, we have $\|g_{ij}\|_{3,\Gamma}\leq C$ and $\|\Gamma_{ij}^k\|_{2,\Gamma}\leq C$. Therefore, by the elliptic estimates with coefficients in Sobolev spaces, one has:
\[
\|v^3\|_{3,\Gamma}\lesssim\|\p_t(\sqrt{g}g^{ij})\p_i\p_i\eta^3\|_{1,\Gamma}+\|\p_t(\sqrt{g}g^{ij}\Gamma_{ij}^k)\p_k\eta^3\|_{1,\Gamma}+\frac{1}{\sigma}\|\p_t(a^{\mu 3}Q)N_{\mu}\|_{1,\Gamma}
\]
For the term $\|\p_t(\sqrt{g}g^{ij})\p_i\p_i\eta^3\|_{1,\Gamma}$, we have:
\begin{equation}\label{v31}
\|\p_t(\sqrt{g}g^{ij})\p_i\p_i\eta^3\|_{1,\Gamma}\lesssim \|\p_t(\sqrt{g}g^{-1})\|_{1.5,\Gamma}\|\TP^2\eta^3\|_{1,\Gamma}\lesssim\|v\|_3\int_0^t\|\TP^2 v^3\|_{1,\Gamma}\lesssim\|v\|_3\int_0^t\|v\|_{3.5},
\end{equation} where we used $\cp^2\eta^3 =\int_0^t \cp^2 v^3$ since $\TP^2\eta(0)=0$.
For the term $\|\p_t(\sqrt{g}g^{ij}\Gamma_{ij}^k)\p_k\eta^3\|_{1,\Gamma}$, one expands the Christoffel symbol to get
\[
\begin{aligned}
\p_t(\sqrt{g}g^{ij}\Gamma_{ij}^k)=&-\frac{1}{\sqrt{g}}g^{mn}\p_m\eta^{\tau}\p_t\p_n\eta_{\tau}g^{ij}g^{kl}\p_l\eta^{\nu}\p_i\p_j \eta_{\nu}+\sqrt{g}\p_t(g^{ij}g^{kl}\p_l\eta^{\nu})\p_i\p_j\eta_{\nu}\\
&+\sqrt{g}g^{ij}g^{kl}\p_l\eta^{\nu}\p_i\p_j v_{\nu}.
\end{aligned}
\] 
A direct computation yields:
\begin{equation}\label{v32}
\begin{aligned}
\|\p_t(\sqrt{g}g^{ij}\Gamma_{ij}^k)\p_k\eta^3\|_{1,\Gamma}&\lesssim\sum_{k=1}^2\|v\|_{3.5}\|\p_k\eta^3\|_{1.5,\Gamma}\lesssim\|v\|_{3.5} \left(\underbrace{\p_k\eta^3(0)}_{=0}+\int_0^t \|v\|_3\right) \\
&\lesssim\|v\|_{3.5}\int_0^t\|v\|_{3}.
\end{aligned}
\end{equation}

The term containing $q$ can be easily estimated by H\"older's inequality and Sobolev embedding. Summing up \eqref{v31} and \eqref{v32} and using \eqref{datapriori} and the trace lemma, we have the boundary control of $v^3$:
\begin{equation}\label{v3bdry}
\begin{aligned}
\|v^3\|_{3,\Gamma}&\lesssim \|Q_t\|_1+P(\|v\|_{2.5+\delta},\|Q\|_{1.5,\Gamma})+P(\|v\|_{3.5})\int_0^tP(\|v\|_{3.5})\\
&\lesssim \|Q_t(0)\|_1+\int_0^t\|Q_{tt}(s)\|_1 ds+ P(\|Q_0\|_2)+\int_0^t\|Q_t(s)\|_2 ds+P(\|v\|_{2.5+\delta})+\PP\int_0^t\PP\\
&\lesssim\PP_0+\PP\int_0^t\PP+P(\|v\|_{2.5+\delta}),
\end{aligned}
\end{equation}

\subsection{Control of $v_t^3$ and $v_{tt}^3$: Comparing $\Pi X$ with $X^3$}

To control $\|v_t^3\|_{2,\Gamma}$ and $\|v_{tt}^3\|_{1,\Gamma}$, we need to use the bound for $\Pi v_t$ and $\Pi v_{tt}$. In general, we need the following argument, which was proved in Section 6.1 of \cite{disconzi2017prioriI}: 
\begin{lem}[Compare $\Pi X$ with $X\cdot N$]\label{tgproj}
For any (smooth) vector field $X$ in  $\Omega$, we have

\begin{equation} \label{XX33}
\|\TP X^3\|_{0,\Gamma}\lesssim\epsilon\left(\sum_{\alpha=1}^3\|\TP X^{\alpha}\|_{0,\Gamma}+\sum_{\alpha=1}^3\| X^{\alpha}\|_{1}\right)+\|\TP(\Pi X)\|_{0,\Gamma}.
\end{equation}
\begin{equation} \label{X3}
\begin{aligned}
\|\TP^2 X^3\|_{0,\Gamma}\lesssim&\epsilon\|X\|_{2.5}+\epsilon\|\TP^2 X\|_{0,\Gamma}+ P(\|Q\|_{1.5,\Gamma},\|\p_t X(0)\|_{0})\\
&+\|\TP^2(\Pi X)\|_{0,\Gamma}+\int_0^t P(\|\p_t X\|_0);
\end{aligned}
\end{equation}
\end{lem}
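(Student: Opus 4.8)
The backbone is the pointwise identity
\[
X^3-(\Pi X)^3=g^{kl}\p_k\eta^3\,\p_l\eta_\lambda X^\lambda ,
\]
i.e.\ the $\alpha=3$ component of $\Pi^\alpha_\lambda=\delta^\alpha_\lambda-g^{kl}\p_k\eta^\alpha\p_l\eta_\lambda$ contracted with $X$; write $\mathfrak{e}^\lambda:=g^{kl}\p_k\eta^3\,\p_l\eta_\lambda$ for the correction coefficient. The whole content of the lemma is that $\mathfrak{e}$ and its tangential derivatives are \emph{small}: on $\Gamma=\T^2\times\{1\}$ one has $\eta^3(0,\cdot)\equiv 1$, so $\p_k\eta^3(0,\cdot)=0$ for $k=1,2$, whence $\TP^m\eta^3=\int_0^t\TP^m v^3\,ds$ on $\Gamma$ for every $m\ge1$ by the fundamental theorem of calculus; together with Lemma \ref{estimatesofa} this gives $\|\p_k\eta^3\|_{L^\infty(\Gamma)}\lesssim\epsilon$ and $\|\mathfrak{e}\|_{L^\infty(\Gamma)}\lesssim\epsilon$ for $T$ small. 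The coefficient norms needed below are furnished by Proposition \ref{dhj}, which gives $\|\eta\|_{4,\Gamma}\lesssim P(\|Q\|_{2,\Gamma})$ and hence $g,g^{ij}\in H^3(\Gamma)\hookrightarrow W^{1,\infty}(\Gamma)$ by Sobolev embedding on $\T^2$; a generic $X$ will be placed in $L^4(\Gamma)$ via $\|X\|_{L^4(\Gamma)}\lesssim\|X\|_{H^{1/2}(\Gamma)}\lesssim\|X\|_1$.

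For \eqref{XX33} the plan is to apply $\TP$ to the identity, send $\TP(\Pi X)^3$ to the right (it is dominated by $\|\TP(\Pi X)\|_{0,\Gamma}$), and Leibniz-expand $\TP(\mathfrak{e}^\lambda X^\lambda)$. Each term falls into one of three cases: (i) $\mathfrak{e}^\lambda\TP X^\lambda$, bounded by $\|\mathfrak{e}\|_{L^\infty(\Gamma)}\|\TP X\|_{0,\Gamma}\lesssim\epsilon\sum_\alpha\|\TP X^\alpha\|_{0,\Gamma}$; (ii) a term where $\TP$ hits $g$, $\p_l\eta_\lambda$, or $X$ while an undifferentiated factor $\p_k\eta^3$ survives, bounded by H\"older by $\|\p_k\eta^3\|_{L^\infty(\Gamma)}$ (small) times bounded coefficients times $\|X\|_{L^4(\Gamma)}$ or $\|\TP X\|_{0,\Gamma}$, hence $\lesssim\epsilon\big(\sum_\alpha\|X^\alpha\|_1+\sum_\alpha\|\TP X^\alpha\|_{0,\Gamma}\big)$; and (iii) the single term $g^{kl}\,\TP^2\eta^3\,\p_l\eta_\lambda X^\lambda$ with $\TP$ on $\p_k\eta^3$, where $\TP^2\eta^3=\int_0^t\TP^2 v^3$ gives $\|\TP^2\eta^3\|_{L^4(\Gamma)}\lesssim\int_0^t\|v\|_{3.5}\lesssim\epsilon$, again paired with $X\in L^4(\Gamma)$. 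Summing over the finitely many terms yields \eqref{XX33}.

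For \eqref{X3} I would repeat the scheme with $\TP^2$. The terms of $\TP^2(\mathfrak{e}^\lambda X^\lambda)$ that retain an undifferentiated $\p_k\eta^3$ go through verbatim: they produce $\epsilon\|\TP^2 X\|_{0,\Gamma}$ (both derivatives on $X$) and, when at most one derivative is on $X$, terms $\lesssim\epsilon\|X\|_{2.5}$ (the surviving $\TP X$, resp.\ $X$, being placed in $L^4(\Gamma)$, resp.\ $L^\infty(\Gamma)$, both $\lesssim\|X\|_{2.5}$), while the terms in which at most one $\TP$ hits $\eta^3$ again carry a factor $\int_0^t\lesssim\epsilon$. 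The genuinely delicate case — and the one I expect to be the main obstacle — is the term $g^{kl}\,\TP^3\eta^3\,\p_l\eta_\lambda X^\lambda$ in which \emph{both} tangential derivatives fall on the $\eta^3$-factor: coefficient smallness alone does not close it, because one cannot afford $X\in L^\infty(\Gamma)$ for a generic $X\in H^1(\Omega)$ and the naive time-integral only controls $\TP^3\eta^3$ in $L^2(\Gamma)$. To treat it I would invoke the surface-tension boundary equation $\sqrt g\,\Delta_g\eta^3=-\tfrac{1}{\sigma}a^{\mu 3}N_\mu Q$ on $\Gamma$: fed into the boundary elliptic estimate for $\Delta_g$ (coefficients bounded via Proposition \ref{dhj}) it bounds $\|\eta^3\|_{3.5,\Gamma}$, hence $\|\TP^2\eta^3\|_{L^\infty(\Gamma)}$, by $P(\|Q\|_{1.5,\Gamma})$ — the origin of that $Q$-dependence in \eqref{X3}. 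The leftover top-order $\eta^3$-factor is then handled by using again $\TP^m\eta^3=\int_0^t\TP^m v^3$, together with $v=\p_t\eta$ and $X(t)=X(0)+\int_0^t\p_t X\,ds$, so that whatever is not absorbed by a small coefficient reduces to $\|\p_t X(0)\|_0+\int_0^t P(\|\p_t X\|_0)$. Collecting all contributions and absorbing the $\epsilon$-multiplied pieces to the left then gives \eqref{X3}.
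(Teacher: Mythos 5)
The paper itself offers no proof of this lemma --- it is quoted from Section 6.1 of \cite{disconzi2017prioriI} --- so your sketch has to be judged on its own terms. Your starting identity $X^3-(\Pi X)^3=g^{kl}\p_k\eta^3\p_l\eta_\lambda X^\lambda$, the Leibniz expansion, and the smallness of every factor carrying one or two tangential derivatives of $\eta^3$ (via $\TP^m\eta^3=\int_0^t\TP^m v^3$ on $\Gamma$ and $T$ small) are exactly the right ingredients, and your argument for \eqref{XX33} is complete; in fact you do not even need Proposition \ref{dhj} there, since $\|\eta\|_{3,\Gamma}\lesssim\|\eta\|_{3.5}\leq C$ from Lemma \ref{estimatesofa} already bounds all coefficients, which is preferable because the right-hand side of \eqref{XX33} carries no $Q$-dependence.

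The gap is in your treatment of the top-order term $g^{kl}\,\TP^2\p_k\eta^3\,\p_l\eta_\lambda X^\lambda$ in \eqref{X3}. First, your diagnosis of the obstacle is off: since $\epsilon\|X\|_{2.5}$ is allowed on the right-hand side, pairing the coefficient with $\|X\|_{L^\infty(\Gamma)}\lesssim\|X\|_{1+\delta,\Gamma}\lesssim\|X\|_{2.5}$ is perfectly affordable, and $\|\TP^3\eta^3\|_{0,\Gamma}\leq\int_0^t\|\TP^3 v^3\|_{0,\Gamma}\lesssim\int_0^t\|v\|_{3.5}\lesssim\epsilon$ (trace theorem plus the standing assumption of Lemma \ref{estimatesofa}) already closes this term as $\epsilon\|X\|_{2.5}$, with no recourse to the surface-tension equation. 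Second, and more importantly, the alternative you actually propose is not carried out: the elliptic bound $\|\eta^3\|_{3.5,\Gamma}\lesssim P(\|Q\|_{1.5,\Gamma})$ controls $\TP^2\eta^3$ in $L^\infty(\Gamma)$ but only puts $\TP^3\eta^3$ in $H^{0.5}(\Gamma)\subset L^4(\Gamma)$, and you then assert, without any derivation, that ``whatever is not absorbed by a small coefficient reduces to $\|\p_t X(0)\|_0+\int_0^t P(\|\p_t X\|_0)$''. Writing $X(t)=X(0)+\int_0^t\p_t X$ on $\Gamma$ produces \emph{boundary} norms of $\p_t X$ (e.g. $\|\p_t X\|_{L^4(\Gamma)}$ or $\|\p_t X\|_{0,\Gamma}$), which are not controlled by the interior norm $\|\p_t X\|_0$ appearing in \eqref{X3}, and it would produce $\|X(0)\|$ rather than $\|\p_t X(0)\|_0$; so the step that is supposed to generate precisely those error terms is missing --- as written you have reverse-engineered the right-hand side rather than derived it. Either run the simpler absorption into $\epsilon\|X\|_{2.5}$ indicated above (which proves the estimate without the $Q$- and $\p_t X$-terms), or reproduce the actual argument of Section 6.1 in \cite{disconzi2017prioriI}, on which the paper relies.
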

\begin{flushright}
$\square$
\end{flushright}

Let $X=v_t$ ($v_{tt}$, resp.) in \eqref{X3} (\eqref{XX33}, resp.), we have:
\begin{equation} \label{vtt3}
\|v_{tt}^3\|_{1,\Gamma}\lesssim\epsilon\left(\sum_{\alpha=1}^3\|\TP v_{tt}^{\alpha}\|_{0,\Gamma}+\sum_{\alpha=1}^3\| v_{tt}^{\alpha}\|_{1}\right)+\|\TP(\Pi v_{tt})\|_{0,\Gamma},
\end{equation}
\begin{equation} \label{vt3}
\|v_t ^3\|_{2,\Gamma}\lesssim\epsilon\|v_t\|_{2.5}+\epsilon\|\TP^2 v_t\|_{0,\Gamma}+ P(\|v_{tt}(0)\|_{0})+\|\TP^2(\Pi v_t)\|_{0,\Gamma}+\int_0^t P(\|v_{tt}\|_0).
\end{equation}Therefore we ends the proof of Proposition \ref{bdrybdry}.
\begin{flushright}
$\square$
\end{flushright}

\section{Tangential Estimates} \label{tang}
In this section we will derive the tangential estimates of $v_{ttt}, b_{ttt}$ and $v_{tt}, b_{tt}$, as well as the tangential projection $\Pi v_t$ and $\Pi v_{tt}$, which together with the div-curl estimates in Section \ref{divcurl} and the boundary estimates in Section \ref{bdryofv}, will close all the a priori estimates. Our conclusion in this section is:
\begin{prop}\label{tg}
Assume the assumptions of Lemma \ref{estimatesofa} holds, then we have:
\begin{equation}\label{tgvtttbttt}
\|v_{ttt}\|_0^2+\|b_{ttt}\|_0^2+\|\TP(\Pi v_{tt})\|_{0,\Gamma}^2\lesssim\epsilon(\|Q_{tt}\|_1^2+\|v_t\|_{2.5}^2)+P(\|v\|_{2.5+\delta})+\PP_0+\int_0^t\PP,
\end{equation}
and
\begin{equation}\label{tgvttbtt}
\|\TP v_{tt}\|_0^2+\|\TP b_{tt}\|_0^2+\|\TP^2(\Pi v_{t})\|_{0,\Gamma}^2\lesssim\epsilon\|v_t\|_{2.5}^2+P(\|v\|_{2.5+\delta})+\PP_0+\int_0^t\PP.
\end{equation}Here $\epsilon>0$ is a positive small constant and is to be determined.
\end{prop}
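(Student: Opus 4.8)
The plan is to derive both estimates from a pair of coupled $L^2$ energy identities. Using $a^{\mu\beta}b_\beta = b_0^\mu$ from Lemma \ref{GW}, the momentum and induction equations in \eqref{MHDL} read $\p_t v_\alpha = (b_0\cdot\p)b_\alpha - a^{\mu\alpha}\p_\mu Q$ and $\p_t b_\alpha = (b_0\cdot\p)v_\alpha$, so $v$ solves a wave-type equation with ``speed'' $b_0$, which is divergence free and tangent to $\p\Omega$. For \eqref{tgvtttbttt} I would differentiate the momentum equation three times in $t$ and test it with $v_{ttt}$, differentiate the induction equation three times and test it with $b_{ttt}$, and add. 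The top-order magnetic contributions combine into $\int_\Omega (b_0\cdot\p)(v_{ttt}\cdot b_{ttt})$, which integrates to zero because $\di b_0 = 0$ and $b_0\cdot N = 0$ on $\p\Omega$; this is the cancellation \eqref{tgtttcancel} (and the same mechanism gives \eqref{tgvttcancel} one order lower). What survives on the right is the pressure contribution $-\int_\Omega v_{ttt}^\alpha \p_t^3(a^{\mu\alpha}\p_\mu Q)$ together with commutator terms that are of strictly lower order and bounded by $\PP$ through Lemma \ref{estimatesofa} and the Kato--Ponce inequalities of Lemma \ref{KatoPonce}.

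Next I would integrate in $t$ and split off the genuinely top-order piece $-\int_0^t\int_\Omega v_{ttt}^\alpha a^{\mu\alpha}\p_\mu Q_{ttt}$, the remaining terms carrying at most two time derivatives on $Q$ and being controlled by Proposition \ref{QQ}. Integrating by parts in space and invoking Piola's identity \eqref{piola} produces an interior term containing $a^{\mu\alpha}\p_\mu v_{ttt,\alpha}$, which is reduced to lower order by differentiating the incompressibility constraint $a^{\mu\alpha}\p_\mu v_\alpha = 0$ three times in $t$; the $Q_{ttt}$ sitting against it is then traded for $Q_{tt}$ by one further integration by parts in time, at the cost of an absorbable $\epsilon\|Q_{tt}\|_1^2$ after Young's inequality. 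The boundary integral $-\int_0^t\int_\Gamma v_{ttt}^\alpha a^{\mu\alpha}N_\mu Q_{ttt}$ is the decisive one: here the surface-tension equation lets me replace $a^{\mu\alpha}N_\mu Q$ by $-\sigma\sqrt{g}\Delta_g\eta^\alpha$, so that $a^{\mu\alpha}N_\mu Q_{ttt} = -\sigma\p_t^3(\sqrt{g}\Delta_g\eta^\alpha)$ modulo terms carrying at most $Q_{tt}$. Using the identity for $\p_t(\sqrt{g}\Delta_g\eta^\nu)$ recorded in Section \ref{section 1.3}, one has $\p_t^3(\sqrt{g}\Delta_g\eta^\alpha) = \p_i(\sqrt{g}g^{ij}\Pi^\alpha_\lambda \p_j v_{tt}^\lambda)$ plus lower order; integrating by parts in the tangential variable $\p_i$ (no boundary contribution since $\Gamma$ is closed), symmetrising with the help of $\Pi^2 = \Pi$ and the symmetry of $\sqrt{g}g^{ij}$, and carrying out the time integration extracts, at time $t$, the term $-\frac{\sigma}{2}\int_\Gamma \sqrt{g}g^{ij}\p_i(\Pi v_{tt})^\alpha \p_j(\Pi v_{tt})_\alpha$, which by Lemma \ref{estimatesofa} ($\sqrt{g}g^{ij}\sim \delta^{ij}$ on $[0,T]$) bounds a fixed negative multiple of $\|\TP(\Pi v_{tt})\|_{0,\Gamma}^2$. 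Moving it to the left-hand side, estimating all remaining remainders by $\PP$ plus the absorbable $\epsilon$-terms and $P(\|v\|_{2.5+\delta})$, and invoking Proposition \ref{QQ}, yields \eqref{tgvtttbttt}.

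The estimate \eqref{tgvttbtt} follows the same template one time-derivative lower and with an extra tangential derivative: apply $\TP\p_t$ to the momentum equation and test with $\TP v_{tt}$, do the analogue for the induction equation with $\TP b_{tt}$, and add; the transport terms cancel exactly as in \eqref{tgvttcancel}, the commutators $[\TP, b_0\cdot\p]$ are of lower order, and the pressure contribution yields, after integration by parts in space and the surface-tension substitution $a^{\mu\alpha}N_\mu Q_{tt} = -\sigma\p_t^2(\sqrt{g}\Delta_g\eta^\alpha)$ plus lower order, the coercive boundary term $\|\TP^2(\Pi v_t)\|_{0,\Gamma}^2$; at this order $Q_{tt}$ never has to be controlled on $\Gamma$ since $Q_t\in H^{2.5}$ by Proposition \ref{QQ}. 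The main obstacle is the interplay of the three structural features that make the argument close: (i) the top-order magnetic terms must cancel on summing the $v$- and $b$-energies, so the only truly top-order term is the pressure; (ii) Piola's identity together with the thrice-differentiated incompressibility must annihilate the interior $Q_{ttt}$, which is not controlled by $N(t)$; and (iii) the surface-tension equation must convert the boundary $Q_{ttt}$ into precisely $\p_i(\sqrt{g}g^{ij}\Pi^\alpha_\lambda \p_j v_{tt}^\lambda)$, so that tangential integration by parts plus the projection identity produce positivity rather than an uncontrolled tangential derivative of $v_{ttt}$. Alongside this, bookkeeping the many lower-order remainders --- products of derivatives of $a$ estimated through Lemma \ref{estimatesofa}, the commutators $[\TP^k, b_0\cdot\p]$, the sub-leading parts of $\p_t^3(\sqrt{g}\Delta_g\eta)$ written via the rational functions $R(\TP\eta)$, and the negative-index boundary traces handled by Lemma \ref{divtrace} --- and checking that each is dominated by $\PP$ (hence feeds the Gr\"onwall-type closure $N(t)\lesssim P(N(0)) + P(N(t))\int_0^t P(N(s))\,ds$) while keeping every $\epsilon$-absorption consistent, is the technically heaviest part.
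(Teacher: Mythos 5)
Your overall architecture matches the paper's: the $\p_t^3$-level energy identity with the magnetic cancellation \eqref{tgtttcancel}, removal of the interior $Q_{ttt}$ via the time-differentiated constraint $\p_t^3(a^{\mu\alpha}\p_\mu v_\alpha)=0$ plus a time integration by parts costing $\epsilon\|Q_{tt}\|_1^2$, the surface-tension substitution on $\Gamma$ with extraction of the coercive $-\|\TP(\Pi v_{tt})\|_{0,\Gamma}^2$, and the analogous $\TP\p_t^2$ computation for \eqref{tgvttbtt}. However, there is a genuine gap at the boundary step of \eqref{tgvtttbttt}. You first estimate all non-top interior pressure terms (in particular $a^{\mu\alpha}_{ttt}\p_\mu Q\,\p_t^3 v_\alpha$) directly, and only integrate by parts in space in the remaining piece $\int v_{ttt}^\alpha a^{\mu\alpha}\p_\mu Q_{ttt}$. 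When you then write $a^{\mu\alpha}N_\mu Q_{ttt}=-\sigma\p_t^3(\sqrt{g}\Delta_g\eta^\alpha)$ ``modulo terms carrying at most $Q_{tt}$,'' the discarded corrections are $-3\p_t a^{\mu\alpha}N_\mu Q_{tt}-3\p_t^2a^{\mu\alpha}N_\mu Q_t-\p_t^3a^{\mu\alpha}N_\mu Q$, all paired with $\p_t^3v_\alpha$ \emph{on the boundary}. None of these is benign: $N(t)$ controls only $\|v_{ttt}\|_0$, so $v_{ttt}$ has no trace, and $\p_t^3 a$ (built from $\p v_{tt}\in H^{0.5}$) barely has one; the paper singles out exactly this term ($I_{14}$) as the most cumbersome one and can dispose of it only through an \emph{exact cancellation} with the boundary term produced by integrating the interior $a^{\mu\alpha}_{ttt}\p_\mu Q\,\p_t^3v_\alpha$ term by parts in space. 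By estimating that interior term directly you forfeit the cancelling partner, and your scheme is left with a boundary integral that the energy cannot control; the cross terms $a_tNQ_{tt}v_{ttt}$ and $a_{tt}NQ_tv_{ttt}$ likewise require additional structure (a time integration by parts, and Lemma \ref{estimatesofa}(8) combined with the surface-tension equation again), not a direct bound.

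A secondary but real omission: after the substitution, $\p_t^3(\sqrt{g}\Delta_g\eta^\alpha)$ is \emph{not} $\p_i(\sqrt{g}g^{ij}\Pi^\alpha_\lambda\p_j v_{tt}^\lambda)$ plus lower order. The second block of the identity, $\sqrt{g}(g^{ij}g^{kl}-g^{lj}g^{ik})\p_j\eta^\alpha\p_k\eta^\lambda\p_l\p_t^2v^\lambda$, contributes the term $I_{112}$, which is of the same (top) order as the coercive piece and is handled in the paper only via the determinant structure of Coutand--Shkoller (writing the integrand as $\frac{1}{\sqrt g}(\p_t\det A^1+\det A^2+\det A^3)$ and integrating by parts in time); dismissing it as lower order would leave an uncontrolled boundary term with $\p\p_t^2 v$ against $\p\p_t^3 v$. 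The same two issues recur, in milder form, at the $\TP\p_t^2$ level. So the skeleton is right, but the two structural devices that actually close the argument --- the $I_{14}$ cancellation obtained by keeping the whole $\p_t^3(a^{\mu\alpha}\p_\mu Q)$ in divergence-like form before splitting, and the determinant trick for $I_{112}$ --- are missing from your proposal.
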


\textbf{Remark: }Before going to the proof, we point out thet all the boundary integrals on the fixed bottom $\Gamma_0$ vanish since we have 
\[
v^3=0,\p_i\eta^3=0
\] and thus 
\[
\p_iv^3=0, \p_t v^3=\p_t^2 v^3=\p_t^3 v^3=0, a^{31}=a^{32}=0.
\]

\subsection{Estimates of $v_{ttt},b_{ttt}$ and boundary term $\p\Pi v_{tt}$}
We start with $\|v_{ttt}\|_0^2+\|b_{ttt}\|_0^2$. From the first two equations in \eqref{MHDL}, we have
\begin{equation}\label{tgttt0}
\begin{aligned}
\frac{1}{2}\|v_{ttt}\|_0^2+\frac{1}{2}\|b_{ttt}\|_0^2&=\frac{1}{2}\|v_{ttt}(0)\|_0^2\underbrace{-\int_0^t\int_{\Omega}\p_t^3(a^{\mu\alpha}\p_{\mu}Q)\p_t^3 v_{\alpha}~dyds}_{I}+\underbrace{\int_0^t\int_{\Omega}\p_t^3(b_0^{\mu}\p_{\mu}b^{\alpha})\p_t^3v_{\alpha}~dyds}_{J} \\
&~~~~+\frac{1}{2}\|b_{ttt}(0)\|_0^2+\underbrace{\int_0^t\int_{\Omega}\p_t^3(b_0^{\mu}\p_{\mu}v^{\alpha})\p_t^3b_{\alpha}~dyds}_{K}.
\end{aligned}
\end{equation}

We observe that $J+K$ actually vanishes. Indeed, one can integrate $\p_\mu$ by parts in $J+K$ to get
\begin{equation}\label{tgtttcancel}
\begin{aligned}
J+K&=\int_0^t\int_{\Omega}b_0^{\mu}\p_t^3\p_{\mu}b^{\alpha}\p_t^3v_{\alpha}+b_0^{\mu}\p_t^3\p_{\mu}v^{\alpha}\p_t^3b_{\alpha}~dyds \\
&=\int_0^t\int_{\Omega}b_0^{\mu}\p_{\mu}(\p_t^3b^{\alpha}\p_t^3v_{\alpha})dyds\\
&=-\int_0^t\int_{\Omega}\underbrace{\p_{\mu}b_0^{\mu}}_{=0}\p_t^3b^{\alpha}\p_t^3v_{\alpha}dyds+\int_0^t\int_{\Gamma}\underbrace{b_0^{\mu}N_{\mu}}
_{=0}(\p_t^3b^{\alpha}\p_t^3v_{\alpha}) dSds=0.
\end{aligned}
\end{equation}
Therefore it suffices to control $I$. Here we remark that we have to integrate $\p_{\mu}$ by parts once the term $\p_{\mu}Q_{ttt}$ appears since there is no control of $Q_{ttt}$. After this, we invoke the fifth equation of \eqref{MHDL} to replace $Q_{ttt}|_{\Gamma}$ by the surface tension term and its time derivatives. To do this, we first expand $I$ as follows. 
\begin{equation}\label{I}
\begin{aligned}
I&=-\int_0^t\int_{\Omega}a^{\mu\alpha}\p_{\mu}Q_{ttt} \p_t^3v_{\alpha}dyds-3\int_0^t\int_{\Omega}a^{\mu\alpha}_t\p_{\mu}Q_{tt}\p_t^3v_{\alpha}dyds-3\int_0^t\int_{\Omega}a^{\mu\alpha}_{tt}\p_{\mu}Q_{t}\p_t^3v_{\alpha}dyds\\
&~~~~ -\int_0^t\int_{\Omega}a^{\mu\alpha}_{ttt}\p_{\mu}Q\p_t^3v_{\alpha}dyds\\
&=:I_1+I_2+I_3+I_4.
\end{aligned}
\end{equation}
With the help of Lemma \ref{estimatesofa} and Theorem \ref{QQ}, $I_2+I_3$ can be directly controlled by H\"older's inequality and Sobolev embedding:
\begin{equation}\label{I2I3}
\begin{aligned}
I_2+I_3&\leq\int_0^t(\|a_t\|_{2}\|\p Q_{tt}\|_{0}+\|a_{tt}\|_{0.5}\|\p Q_t\|_{1})\|v_{ttt}\|_0ds\\
&\lesssim\int_0^tP(\|v\|_3, \|v_t\|_{1.5}, \|v_{ttt}\|_0, \|Q_{tt}\|_1, \|Q_t\|_2)ds \\
&\lesssim\int_0^tP(\|v\|_3,\|v_t\|_{2.5}, \|v_{tt}\|_{1.5}, \|v_{ttt}\|_0, \|b\|_{3.5},\|b_t\|_{2.5}, \|b_{tt}\|_1)ds.
\end{aligned}
\end{equation}

For $I_1$, integrating $\p_{\mu}$ by parts, then invoking the surface tension equation, one has:
\begin{equation}\label{0I1}
\begin{aligned}
I_1&=-\int_0^t\int_{\Gamma}a^{\nu\alpha}Q_{ttt} \p_t^3 v_{\alpha}N_{\mu}dSds+\underbrace{\int_0^t\int_{\Omega}a^{\mu\alpha}Q_{ttt}\p_{\mu}\p_t^3v_{\alpha}dyds}_{I_{10}} \\
&=\underbrace{-\int_0^t\int_{\Gamma}\p_t^3\overbrace{(a^{\mu\alpha}N_{\mu}Q)}^{=-\sigma\sqrt{g}\Delta_g\eta^{\alpha}}\p_t^3v_{\alpha}dSds}_{I_{11}}+I_{10}\\
&~~~~+\underbrace{3 \int_0^t\int_{\Gamma}a^{\mu\alpha}_tN_{\mu}Q_{tt}\p_t^3v_{\alpha} dSds}_{I_{12}}+\underbrace{3\int_0^t\int_{\Gamma}a^{\mu\alpha}_{tt}N_{\mu}Q_t\p_t^3v_{\alpha}dSds}_{I_{13}}+\underbrace{\int_0^t\int_{\Gamma}\p_t^3a^{\mu\alpha}N_{\mu}Q\p_t^3v_{\alpha}dSds}_{I_{14}}.
\end{aligned}
\end{equation}

Here we can see the most cumbersome term is $I_{14}$ apart from $I_{11}$ since $a_{ttt}\in L^2(\Gamma)$ and $v_{ttt}$ cannot be controlled on the boundary. However, we can integrate $\p_{\mu}$ by parts to produce a term which cancels with $I_{14}$. We have:
\begin{equation}\label{I40}
\begin{aligned}
I_4&=-\int_0^t\int_{\Omega} \p_t^3 a^{\mu\alpha}\p_{\mu}Q \p_t^3 v_{\alpha}dyds  \\
&=-\int_0^t\int_{\Gamma}\p_t^3 a^{\mu\alpha}Q N_{\mu}\p_t^3v_{\alpha} dSds+\underbrace{\int_0^t\int_{\Omega}\p_t^3a^{\mu\alpha}Q\p_t^3 \p_{\mu}v_{\alpha} dyds}_{I_{41}} \\
&=-I_{14}+I_{41}.
\end{aligned}
\end{equation}

Up to now, it remains to control $I_{10}, I_{11}, I_{12}, I_{13}, I_{41}$. For $I_{41}$, one first differentiates $\p_t$ twice in Lemma \ref{estimatesofa} (8) to get $$\p_t^3a^{\mu\alpha}=-a^{\mu\nu}\p_{\beta}\p_t^2v_{\nu} a^{\beta\alpha}+L.O.T.$$ Therefore the main term of $I_{41}$ is 
\[
I_{41}=-\int_0^t\int_{\Omega}a^{\mu\nu}\p_{\beta}\p_t^2v_{\nu} a^{\beta\alpha}a^{\mu\alpha}Q\p_t^3 \p_{\mu}v_{\alpha} dyds+L.O.T.
\]Also we observe that 
\[
\p_t(a^{\mu\nu}\p_{\beta}\p_t^2v_{\nu} a^{\beta\alpha}a^{\mu\alpha}\p_t^2\p_{\mu}v_{\alpha})=2a^{\mu\nu}\p_{\beta}\p_t^2v_{\nu} a^{\beta\alpha}\p_t^3 \p_{\mu}v_{\alpha}+2\p_ta^{\mu\nu}\p_{\beta}\p_t^2v_{\nu} a^{\beta\alpha}\p_t^2 \p_{\mu}v_{\alpha},
\]which implies the main term of $I_{41}$ becomes
\begin{equation}
\begin{aligned}
I_{41}&=-\frac{1}{2}\int_{\Omega}a^{\mu\nu}\p_{\beta}\p_t^2v_{\nu} a^{\beta\alpha}a^{\mu\alpha}\p_t^2\p_{\mu}v_{\alpha}Q \bigg|_0^t+\frac{1}{2}\int_0^t \int_{\Omega}a^{\mu\nu}\p_{\beta}\p_t^2 v_{\nu}a^{\beta\alpha}\p_{\mu}\p_t^2 v_{\alpha}Q_t \\
&~~~~-\int_0^t\int_{\Omega}\p_t a^{\mu\nu}\p_{\beta}\p_t^2 v_{\nu}a^{\beta\alpha}\p_{\mu}\p_t^2v_{\alpha} Q +L.O.T. \\
&\lesssim P(\|v_0\|_3,\|v_{tt}(0)\|_{1},\|Q(0)\|_2,\|b_0\|_2)+\|v_{tt}\|_1^2 \|Q\|_2+\int_0^t\PP .
\end{aligned}
\end{equation}
To eliminate the term $\|v_{tt}\|_1^2 \|Q\|_2$, we first use the interpolation inequality and $\epsilon$-Young's inequality:
\[
\|v_{tt}\|_1^2 \|Q\|_2\lesssim \|v_{tt}\|_{1.5}^{4/3}\|v_{tt}\|_0^{2/3} \|Q\|_2\lesssim\epsilon \|\p_t^2 v\|_{1.5}^2+P(\|Q\|_2,\|v_{tt}\|_0)\lesssim\epsilon \|\p_t^2 v\|_{1.5}^2+P(\|Q\|_2,\|b\|_2,\|v_{tt}\|_0).
\] Then the last term can be written as the initial data plus the time integral:
\[
P(\|Q\|_2,\|b\|_2,\|v_{tt}\|_0)=P(\|Q(0)\|_2,\|b_0\|_2,\|v_{tt}(0)\|_0)+\int_0^t \PP.
\]Therefore $I_{41}$ has the following control:
\begin{equation}\label{I41}
I_{41}\lesssim P(\|v_0\|_3,\|v_{tt}(0)\|_{1},\|Q(0)\|_2,\|b_0\|_2)+\int_0^t\PP.
\end{equation}                                                                                                                                                                                                                                                                                                                                                                                                                                                                                                                                                                                                                                                                                                                                                                                                                                                                                                                                                                                                                                                                                                                                                                                                                                                                                                                                                                                                                                                                                                                                                                                                                                                                                                                                                                                                                                                                                                                                                                                                                                                                                                                                                                                                                                                                                                                                                                                                                                                                                                                                                                                                                                                                                                                                                                                                    
The control of $I_{13}$ is also straightforward if we integrate $\p_t$ by parts,:
\begin{equation}\label{I13}
\begin{aligned}
I_{13}&=3\int_{\Gamma} \p_t^2 a^{\mu\alpha}N_{\mu}Q_{t}\p_t^2v_{\alpha} ds\bigg|_{0}^t-3\int_0^t\int_{\Gamma}(\p_t^3 a^{\nu\alpha}Q_{t}+\
\p_t^2 a^{\nu\alpha}Q_{tt})N_{\mu}\p_t^2v_{\alpha}dSds.\\
&\lesssim \|\p_t^2a\|_{L^2(\Gamma)}\|Q_t\|_{L^4(\Gamma)}\|v_{tt}\|_{L^4(\Gamma)}\bigg|_0^t\\
&~~~~+\int_0^t(\|\p_t^3a\|_{L^2(\Gamma)}\|Q_t\|_{L^{\infty}(\Gamma)}+\|\p_t^2 a\|_{L^4(\Gamma)}\|Q_{tt}\|_{L^4(\Gamma)})\|v_{tt}\|_{L^2(\Gamma)}ds \\
&\lesssim \|a_{tt}\|_{0.5}\|Q_t\|_1\|v_{tt}\|_{1.5}+\int_0^t \PP \\
&\lesssim (\|v\|_2^2+\|v_t\|_{1.5})^4+\|Q_t\|_1^4 +\epsilon \|v_{tt}\|_{1.5}^2 +\int_0^t \PP \\
&\lesssim \PP_0+\int_0^t \PP+\epsilon \|v_{tt}\|_{1.5}^2.
\end{aligned}
\end{equation}Here $\epsilon>0$ need not be arbitrarily small, since we only require it can be small enough to be absorbed by $N(t)$.

The control of the remaining terms needs either to invoke the surface tension equation, or to use some tricky simplification. First, we show that the desired term $\|\TP \Pi v_{tt}\|_{0,\Gamma}$ comes from $I_{11}$. Integrating $\p_i$ by parts, one has:

\begin{equation}\label{I110}
\begin{aligned}
\frac{1}{\sigma}I_{11}&=\int_0^t\int_{\Gamma}\p_t^2\p_i\left(\sqrt{g}g^{ij}(\delta^{\alpha}_{\lambda}-g^{kl}\p_k\eta^{\alpha}\p_l\eta^{\lambda})\p_j v^{\lambda})+\sqrt{g}(g^{ij}g^{kl}-g^{lj}g^{ik})\p_j\eta^{\alpha}\p_k\eta^{\lambda}\p_l v^{\lambda}\right)\p_t^3 v_{\alpha}\\
&=\underbrace{-\int_0^t \sqrt{g}g^{ij}(\delta^{\alpha}_{\lambda}-g^{kl}\p_k\eta^{\alpha}\p_l\eta^{\lambda})\p_t^2\p_j v^{\lambda}\p_t^3\p_i v_\alpha}_{I_{111}} \\
&~~~~\underbrace{-\int_0^t\int_{\Gamma}\sqrt{g}(g^{ij}g^{kl}-g^{lj}g^{ik})\p_j\eta^{\alpha}\p_k\eta^{\lambda}\p_l\p_t^2v^{\lambda}\p_t^3\p_i v_{\alpha}}_{I_{112}}+L_{11},
\end{aligned}
\end{equation} where $L_{11}$ consists of all the terms in $\p_t^3(\sqrt{g}\Delta_g \eta^{\alpha})$ with at least one $\p_t$ falling on $\sqrt{g}(g^{ij}g^{kl}-g^{lj}g^{ik})\p_j\eta^{\alpha}\p_k\eta^{\lambda}$ and $\sqrt{g}g^{ij}(\delta^{\alpha}_{\lambda}-g^{kl}\p_k\eta^{\alpha}\p_l\eta^{\lambda})$. We only show how to control $I_{111}$ and $I_{112}$. For the control of $L$, one only needs to integrate $\p_t$ by parts. We refer readers to Section 4.1.1.3 in \cite{disconzi2017prioriI} for details. The result is
\begin{equation}\label{L11}
L_{11}\lesssim\epsilon\|v_{tt}\|_{1.5}^2+P(\|v_0\|_3,\|v_t(0)\|_{1.5},\|v_{tt}(0)\|_{1.5})+P(\|v\|_{2.5+\delta})+\int_0^t\PP.
\end{equation}

To control $I_{111}$, we recall $\Pi^{\alpha}_{\lambda}=\delta^{\alpha}_{\lambda}-g^{kl}\p_k\eta^{\alpha}\p_l\eta^{\lambda}$ to get
\[
I_{111}=-\frac{1}{2}\int_0^t\int_{\Gamma}\sqrt{g}g^{ij}\Pi^{\alpha}_{\lambda}\p_t(\p_t^2\p_jv^{\lambda}\p_t^2\p_i v_{\alpha}).
\] Integrating $\p_t$ by parts, using the symmetry of $g^{-1}$ and $\Pi$, and also $\Pi\p_t^2\p_i v=\p_i(\Pi\p_t^2v)-\p_i\Pi\p_t^2 v$, we obtain
\begin{equation}
\begin{aligned}
I_{111}&=-\frac{1}{2}\int_{\Gamma}\sqrt{g}g^{ij}\Pi^{\alpha}_{\lambda}\p_t^2\p_j v^{\lambda}\p_t^2\p_i v_{\alpha}\bigg|_0^t+\frac{1}{2}\int_0^t\int_{\Gamma}\p_t(\sqrt{g}g^{ij}\Pi_{\lambda}^{\alpha})\p_t^2\p_j v^{\lambda}\p_t^2\p_iv_{\alpha} \\
&=\underbrace{-\frac{1}{2}\int_{\Gamma}\sqrt{g}g^{ij}\p_i(\Pi^{\alpha}_{\mu}\p_t^2v_{\alpha})\p_j(\Pi^{\mu}_{\lambda}\p_t^2v^{\lambda})}_{I_{1111}}\\
&~~~~+\underbrace{\int_{\Gamma}\sqrt{g}g^{ij}\p_i\Pi^{\alpha}_{\mu}\p_t^2v_{\alpha}\p_j(\Pi^{\mu}_{\lambda}\p_t^2v^{\lambda})-\frac{1}{2}\int_{\Gamma}\p_i\Pi^{\alpha}_{\mu}\p_j\Pi^{\mu}_{\lambda}\p_t^2v_{\alpha}\p_t^2v^{\lambda}}_{I_{1112}}\\
&~~~~\underbrace{-\frac{1}{2}\int_{\Gamma}\sqrt{g}g^{ij}\Pi^{\alpha}_{\lambda}\p_t^2\p_j v^{\lambda}\p_t^2\p_i v_{\alpha}\bigg|_{t=0}}_{I_{1110}}+\underbrace{\frac{1}{2}\int_0^t\int_{\Gamma}\p_t(\sqrt{g}g^{ij}\Pi_{\lambda}^{\alpha})\p_t^2\p_j v^{\lambda}\p_t^2\p_iv_{\alpha}}_{L_{111}}.\\
\end{aligned}
\end{equation} 

The main term is $I_{1111}$. Plugging $\sqrt{g}g^{ij}=\delta^{ij}+(\sqrt{g}g^{ij}-\delta^{ij})$ and $\|\sqrt{g}g^{ij}-\delta^{ij}\|_{1.5,\Gamma}\leq\epsilon$, we get the desired term $\|\TP\Pi v_{tt}\|_{0,\Gamma}^2$ in the following way:
\begin{equation}
\begin{aligned}
I_{1111}&=-\frac{1}{2}\|\TP\Pi v_{tt}\|_{0,\Gamma}^2-\frac{1}{2}\int_{\Gamma}(\sqrt{g}g^{ij}-\delta^{ij})\p_i(\Pi^{\alpha}_{\mu}\p_t^2v_{\alpha})\p_j(\Pi^{\mu}_{\lambda}\p_t^2v^{\lambda}) \\
&\leq-\frac{1}{2}\|\TP\Pi v_{tt}\|_{0,\Gamma}^2+\epsilon\|\TP\Pi v_{tt}\|_{0,\Gamma}^2.
\end{aligned}
\end{equation}

For the remaining terms, invoking $\|\sqrt{g}g^{-1}\|_{1.5,\Gamma}\lesssim 1$ and $\|\TP\Pi\|_{1.5,\Gamma}\lesssim\|\eta\|_{3.5,\Gamma}$, one has 
\begin{equation}
\begin{aligned}
L_{111}&\lesssim\int_0^t\|\p_t(\sqrt{g}g^{-1}\Pi)\|_{L^{\infty}(\Gamma)}\|\TP v_{tt}\|_{0,\Gamma}^2\lesssim\int_0^t\|v_{tt}\|_{1.5}\|v\|_3, \\
I_{1112}&\lesssim\|\p_t(\sqrt{g}g^{-1}\Pi)\|_{L^{\infty}(\Gamma)}\|\TP\Pi\|_{L^{\infty}(\Gamma)}\|v_{tt}\|_{0,\Gamma}\|\TP\Pi v_{tt}\|_{0,\Gamma}+\|\TP\Pi\|_{L^{\infty}(\Gamma)}^2\|v_{tt}\|_{0,\Gamma}^2 \\
&\lesssim P(\|Q\|_{1.5,\Gamma})(\|v_{tt}\|_{0,\Gamma}\|\TP\Pi v_{tt}\|_{0,\Gamma}+\|v_{tt}\|_{0,\Gamma}^2)\\
&\lesssim \epsilon \|\TP\Pi v_{tt}\|_{0,\Gamma}^2 +P(\|Q\|_{1.5,\Gamma})\|v_{tt}\|_{0,\Gamma}^2 ,\\
I_{1110}&\lesssim P(\|v_{tt}(0)\|_{1,\Gamma}).
\end{aligned}
\end{equation}
Hence, we have
\begin{equation}\label{I111}
I_{111}\lesssim -\|\TP\Pi v_{tt}\|_{0,\Gamma}^2+\epsilon \|v_{tt}\|_{1.5}^2+P(\|v_{tt}(0)\|_{1.5})+P(\|Q\|_{1.5,\Gamma})+\int_0^t\PP.
\end{equation}

To end the estimates of $I_{11}$, it remains to control $I_{112}$, which requires some remarkable structures introduced in \cite{coutand2007LWP}. The detailed computation is exactly the same as Section 4.1.1.2 in \cite{disconzi2017prioriI}. The estimate for $I_{112}$ is based on the following observation: One can write 
\begin{equation}\label{I112}
I_{112}=\int_0^t\int_{\Gamma}\frac{1}{\sqrt{g}}(\p_t\det A^1+\det A^2+\det A^3),
\end{equation}where $A^1_{ij}=\p_i\eta_{\mu}\p_t^2\p_j v^{\mu}, A^2_{ij}=\p_iv_{\mu}\p_t^2\p_j v^{\mu}$ and 
\[A^3=
\begin{gathered}
\begin{pmatrix}
\p_1\eta_{\mu}\p_t^2\p_1v_{\mu}& \p_1v_{\mu}\p_t^2\p_2v_{\mu}\\
\p_2\eta_{\mu}\p_t^2\p_1v_{\mu}&\p_2v_{\mu}\p_t^2\p_2v_{\mu}
\end{pmatrix}
\end{gathered}.
\]
Now we explain how this identity holds: Consider the integrand $$(g^{ij}g^{kl}-g^{lj}g^{ik})\p_j\eta^{\alpha}\p_k\eta^{\lambda}\p_t\p_l v^{\lambda}\p_t^3\p_i v_{\alpha}.$$
 Since this vanishes if $l=i$,  we only need to consider the case when $(l,i)=(1,2)~and~(2,1)$, and then it becomes
\[
\frac{1}{\sqrt{g}}(\p_1\eta_{\mu}\p_2\eta_{\lambda}-\p_1\eta_{\lambda}\p_2\eta_{\mu})(\p_t^2\p_2 v^{\lambda}\p_t^3\p_1v^{\mu}+\p_t^3 v^{\lambda}\p_t^2\p_1v^{\mu}).
\]One the other hand, one can find
\[
\frac{1}{g}\p_t\det A^1\sim \frac{1}{g}(\p_1\eta_{\mu}\p_2\eta_{\lambda}-\p_1\eta_{\lambda}\p_2\eta_{\mu})(\p_t^2\p_2 v^{\lambda}\p_t^3\p_1v^{\mu}+\p_t^3 v^{\lambda}\p_t^2\p_1v^{\mu}),
\]while $A^2,A^3$ are present precisely to compensate the L.O.T omitted above. The result is
\begin{equation}
I_{112}\lesssim\epsilon\|\TP v_{tt}\|_{0,\Gamma}^2+\epsilon\|v_{tt}\|_{1.5}^2+P(\|v_{tt}(0)\|_{1.5})+P(\|Q\|_{1.5,\Gamma})+\int_0^t \PP.
\end{equation}

Thus, from \eqref{I110}, \eqref{I111}, \eqref{I112}, $I_{11}$ can be controlled:
\begin{equation}\label{I11}
\begin{aligned}
I_{11}&\lesssim-\|\TP v_{tt}\|_{0,\Gamma}^2+\epsilon\|v_{tt}\|_{1.5}^2+P(\|Q\|_{1.5,\Gamma},\|v\|_{2.5+\delta})\\
&~~~~+P(\|v_0\|_3,\|v_t(0)\|_{1.5},\|v_{tt}(0)\|_{1.5})+\int_0^t \PP.
\end{aligned}
\end{equation}

The remaining work is to control $I_{10},I_{12}$ via the surface tension equation. For $I_{10}$, invoking the divergence-free condition for $v$, one has 
\begin{equation}\label{I100}
\begin{aligned}
I_{10}&=-3\int_0^t\int_{\Omega}\p_t^2 a^{\mu\alpha}\p_{\mu}\p_t v_{\alpha}\p_t^3 Q-3\int_0^t\int_{\Omega}\p_t a^{\mu\alpha}\p_{\mu}\p_t^2 v_{\alpha}\p_t^3Q-\int_0^t\int_{\Omega}\p_t^3 a^{\mu\alpha}\p_{\mu} v_{\alpha}\p_t^3Q \\
&=:I_{101}+I_{102}+I_{103}
\end{aligned}
\end{equation}$I_{101}$ can be directly controlled by H\"older's inequality and Sobolev embedding after integrating $\p_t$ by parts:
\begin{equation}\label{I101}
I_{101}\lesssim\epsilon(\|Q_{tt}\|_1^2+\|v_t\|_{2.5}^2)+P(\|v\|_{2.5+\delta})+P(\|v_0\|_3,\|v_t(0)\|_{1.5},\|Q_{tt}(0)\|_1)+\int_0^t \PP.
\end{equation}
The control of $I_{103}$ is similarly as that of $I_{41}$, i.e., plugging $\p_t^3a^{\mu\alpha}=-a^{\mu\nu}\p_{\beta}\p_t^2v_{\nu} a^{\beta\alpha}+L.O.T.$ into $I_{103}$, then integrating $\p_{\beta}$by parts for the main term and integrate $\p_t$ by parts in the remainder terms. Detailed computation can be found in (4.16)-(4.18) in \cite{ignatova2016local}. The result is 
\begin{equation}\label{I103}
I_{103}\lesssim\epsilon(\|Q_{tt}\|_1^2+\|v_{tt}\|_{1.5}^2)+P(\|Q_{tt}(0)\|_1,\|Q_t(0)\|_2,\|v_{tt}(0)\|_{1.5},\|v_t(0)\|_{2.5})+P(\|v\|_{2.5+\delta})+\int_0^t\PP.
\end{equation} 

For $I_{102}$, we first integrate $\p_{\mu}$ by parts, then use Lemma \ref{estimatesofa} (8) and the surface tension equation to get
\begin{equation}\label{I1020}
\begin{aligned}
I_{102}&=-3\int_0^t\int_{\Gamma}\p_t a^{\mu\alpha} \p_t^2 v_{\alpha}\overbrace{\p_t^3Q}^{=\p_t^3 q~on~\Gamma}N_{\mu}dSds+\underbrace{3\int_0^t\int_{\Omega}\p_t a^{\mu\alpha}\p_t^2v_{\alpha}\p_{\mu}\p_t^3 Qdyds}_{L_{1021}} \\
&=3\underbrace{\int_0^t\int_{\Gamma}a^{\mu\nu}\p_{\beta}v_{\nu}a^{\beta\alpha}\p_t^2v_{\alpha}\p_t^3 Q N_{\mu}dSds}_{I_{1021}}+L_{1021}.
\end{aligned} 
\end{equation}The term $L_{1021}$ can be controlled by integrating $\p_t$ by parts. For details, we refer to (4.36) in \cite{disconzi2017prioriI}:
\begin{equation}\label{L1021}
L_{1021}\lesssim \epsilon(\|Q_{tt}\|_1^2+\|v_{tt}\|_{1.5}^2)+P(\|Q_{tt}(0)\|_1,\|v_{tt}(0)\|_{1.5},\|v_0\|_3)+P(\|v\|_2)+\int_0^t \PP.
\end{equation}

To control $I_{1021}$, we differentiate in time variable in the surface tension equation three times to get
\[
a^{\mu\nu}N_{\mu}\p_t^3 Q=-\sigma\p_t^3(\sqrt{g}\Delta_g\eta^{\nu})-3\p_t a^{\mu\nu}N_{\mu}\p_t^2 Q-3\p_t^2 a^{\mu\nu}N_{\mu}\p_t Q-\p_t^3 a^{\mu\nu}N_{\mu}Q
\] and thus 
\begin{equation}\label{I1021}
\begin{aligned}
I_{1021}&=\underbrace{-\sigma\int_0^t\int_{\Gamma}\p_{\beta}v_{\nu}a^{\beta\alpha}\p_t^2v_{\alpha}\p_t^3(\sqrt{g}\Delta_g\eta^{\nu})}_{I_{10211}}-\int_0^t\int_{\Gamma}\p_{\beta}v_{\nu}a^{\beta\alpha}\p_t^2v_{\alpha}\p_t^3 a^{\mu\nu}N_{\mu}Q  \\
&~~~~-3\int_0^t\int_{\Gamma}\p_{\beta}v_{\nu}a^{\beta\alpha}\p_t^2v_{\alpha}\p_t a^{\mu\nu}N_{\mu}\p_t^2 Q-3\int_0^t\int_{\Gamma}\p_{\beta}v_{\nu}a^{\beta\alpha}\p_t^2v_{\alpha}\p_t^2 a^{\mu\nu}N_{\mu}\p_t Q \\
\end{aligned}
\end{equation}
All the terms above can be bounded by $\int_0^t \PP$. The last 3 terms can be bounded directly by using H\"older's inequality and Sobolev embedding, whereas the control of $I_{10211}$ needs us to invoke 
\[
\p_t(\sqrt{g}\Delta_g \eta^{\nu})=\p_i\left(\sqrt{g}g^{ij}(\delta^{\alpha}_{\lambda}-g^{kl}\p_k\eta^{\alpha}\p_l\eta^{\lambda})\p_j v^{\lambda})+\sqrt{g}(g^{ij}g^{kl}-g^{lj}g^{ik})\p_j\eta^{\alpha}\p_k\eta^{\lambda}\p_l v^{\lambda}\right)
\] again.  For details, we refer to the control of $I_{21211}$ in \cite{disconzi2017prioriI}.

From \eqref{I100}, \eqref{I101}, \eqref{I103}, \eqref{I1020}, \eqref{L1021} and \eqref{I1021}, one has
\begin{equation}\label{I10}
I_{10}\lesssim\epsilon(\|Q_{tt}\|_1^2+\|v_t\|_{2.5}^2)+P(\|v\|_{2.5+\delta})+\PP_0+\int_0^t\PP
\end{equation}

The control of $I_{12}$ can be proceeded in the same way as above. We only state the basic idea and list the result. For detailed proof, we refer to Section 4.1.2.2 (control of $I_{221}$) in \cite{disconzi2017prioriI}.

To see this, invoking Lemma \ref{estimatesofa} (8) and the surface tension equation, one can re-write $I_{12}$ to be
\[
I_{12}=3\sigma\int_0^t\int_{\Gamma}\p_{\beta}v_{\nu}a^{\beta\alpha}\p_t^2(\sqrt{g}\Delta_g\eta^{\nu})\p_t^3 v_{\alpha}dSds+\cdots,
\]and then one can mimic the proof of the control of $I_{10211}$ after integrating a tangential derivative and $\p_t$ by parts. The result is 
\begin{equation}\label{I12}
I_{12}\lesssim \epsilon\|v_{tt}\|_{1.5}^2+P(\|v_0\|_3,\|v_t(0)\|_{1.5},\|v_t(0)\|_2.\|v_{tt}(0)\|_{1.5})+P(\|v\|_{2.5+\delta})+\int_0^t \PP.
\end{equation}

Plugging \eqref{I11}, \eqref{I10} and \eqref{I12} into \eqref{0I1}, we have the estimates for $I_1$
\begin{equation}\label{I1}
I_1\lesssim-\|\TP v_{tt}\|_{0,\Gamma}^2+\epsilon(\|Q_{tt}\|_1^2+\|v_t\|_{2.5}^2)+P(\|v\|_{2.5+\delta})+\PP_0+\int_0^t\PP.
\end{equation}
Then combining \eqref{0I1}, \eqref{I2I3}, \eqref{I40} and \eqref{I41}, we know $I$ satisfies the similar estimates as $I_1$. Plugging this and \eqref{tgtttcancel} into \eqref{tgttt}, we finally ends the control of $\|v_{ttt}\|_0$ and $\|b_{ttt}\|_0$ as well as $\|\TP\Pi v_{tt}\|_{0,\Gamma}$:
\begin{equation}\label{tgttt}
\|v_{ttt}\|_0^2+\|b_{ttt}\|_0^2\lesssim-\|\TP(\Pi v_{tt})\|_{0,\Gamma}^2+\epsilon(\|Q_{tt}\|_1^2+\|v_t\|_{2.5}^2)+P(\|v\|_{2.5+\delta})+\PP_0+\int_0^t\PP.
\end{equation}

\subsection{Estimates of $\p v_{tt}$ and boundary term $\p^2\Pi v_{t}$}
In this subsection, we will derive the bound \eqref{tgvttbtt}. Similarly as in the previous subsection, we first compute:
\begin{equation}\label{tgvtt0}
\begin{aligned}
\frac{1}{2}\|\TP v_{tt}\|_0^2+\frac{1}{2}\|\TP b_{tt}\|_0^2&=\frac{1}{2}\|\TP v_{tt}(0)\|_0^2\underbrace{-\int_0^t\int_{\Omega}\TP\p_t^2(a^{\nu\alpha}\p_{\mu}Q)\TP\p_t^2 v_{\alpha}~dyds}_{I^*}\\
&~~~~+\underbrace{\int_0^t\int_{\Omega}\TP\p_t^2(b_0^{\mu}\p_{\mu}b^{\alpha})\TP\p_t^2v_{\alpha}~dyds}_{J^*} \\
&~~~~+\frac{1}{2}\|\TP b_{tt}(0)\|_0^2+\underbrace{\int_0^t\int_{\Omega}\TP\p_t^2(b_0^{\mu}\p_{\mu}v^{\alpha})\TP\p_t^2b_{\alpha}~dyds}_{K^*}.
\end{aligned}
\end{equation}
We observe that the highest order term in $J^*$ cancels with that in $K^*$. Indeed, one can integrate $\p_\mu$ by parts in $J^*+K^*$ to get
\begin{equation}\label{tgvttcancel}
\begin{aligned}
J^*+K^*&=\int_0^t\int_{\Omega}b_0^{\mu}\TP\p_t^2\p_{\mu}b^{\alpha}\TP\p_t^2v_{\alpha}+b_0^{\mu}\TP\p_t^2\p_{\mu}v^{\alpha}\TP\p_t^2b_{\alpha}~dyds \\
&~~~~+\int_0^t\int_{\Omega}\TP\p_t^2 b^{\alpha}\TP b_0^{\mu} \p_{\mu}\p_t^2v_{\alpha} dyds +\int_0^t\int_{\Omega}\TP\p_t^2v^{\alpha}\TP b_0^{\mu}\p_t^2\p_{\mu}b_{\alpha}dyds\\
&\leq\int_0^t\int_{\Omega}b_0^{\mu}\p_{\mu}(\TP\p_t^2b^{\alpha}\TP\p_t^2v_{\alpha})dyds+\int_0^t \|b_{tt}\|_1 \|v_{tt}\|_1\|b_0\|_3 ds\\
&=-\int_0^t\int_{\Omega}\underbrace{\p_{\mu}b_0^{\mu}}_{=0}\TP\p_t^2b^{\alpha}\TP\p_t^2v_{\alpha}dyds+\int_0^t\int_{\Gamma}\underbrace{b_0^{\mu}N_{\mu}}
_{=0}(\TP\p_t^2b^{\alpha}\TP\p_t^2v_{\alpha}) dSds\\
&~~~~+\int_0^t \|b_{tt}\|_1 \|v_{tt}\|_1\|b_0\|_3 ds\\
&\lesssim \int_0^t\PP.
\end{aligned}
\end{equation}
Therefore, it suffices to control $I^*$. In face, the only highest order term in $I^*$ is $\TP\p_t^2v_{\alpha}a^{\mu\alpha}\TP\p_t^2\p_{\mu}Q$ which can be controlled by integrating $\p_{\mu}$ by parts and then invoking the surface tension equation again, while the others can be controlled by H\"older's inequality and Sobolev embedding. 
\begin{equation}\label{I*0}
\begin{aligned}
I^*&=-\int_0^t\int_{\Omega}\TP\p_t^2v_{\alpha}a^{\mu\alpha}\TP\p_t^2\p_{\mu}Q dyds\underbrace{-\int_0^t\int_{\Omega}\TP\p_t^2v_{\alpha}r^{\alpha}dyds}_{I_0^*} \\
&=-\int_0^t\int_{\Gamma}\TP\p_t^2 v^{\alpha}a^{\mu\alpha}N_{\mu} \TP\p_t^2Q dSds+\int_0^t\int_{\Omega}\TP\p_{\mu}\p_t^2v_{\alpha}a^{\mu\alpha}\TP\p_t^2Q dyds +I_0^*\\
&=:I_1^*+I_2^*+I_0^*,
\end{aligned}
\end{equation}where in $I_0^*$ we have $$-r^{\alpha}=\TP a^{\mu\alpha} \p_{\mu}Q_{tt}+a^{\mu\alpha}_t \TP\p_{\mu}Q_t+\TP a^{\mu\alpha}_t \p_{\mu}Q_t+a^{\mu\alpha} \p_{\mu}Q+a^{\mu\alpha}_{tt} \TP\p_{\mu}Q+\TP^2a^{\mu\alpha}_t \p_{\mu}Q,$$ and thus we have the control for $I_0^*$:
\begin{equation}\label{I0*}
I_0^*\lesssim\int_0^t P(\|v\|_3,\|v_t\|_2,\|v_{tt}\|_{1.5},\|Q\|_2,\|Q_t\|_2, \|Q_{tt}\|_1) ds\lesssim\int_0^t\PP.
\end{equation}

$I_2^*$ can also be directly controlled by H\"older's inequality and Sobolev embedding. First the divergence-free condition for $v$ implies $\TP\p_t^2(a^{\mu\alpha}\p_{\mu}v_{\alpha})=0$. Expanding this and plugging it into $I_2^*$, one can get 
\begin{equation}\label{I2*}
I_2^*\lesssim\int_0^t P(\|v\|_3,\|v_t\|_2,\|v_{tt}\|_{1.5},\|Q_{tt}\|_1) ds\lesssim \int_0^t\PP.
\end{equation}

Now it remains to control $I_1^*$. invoking the surface tension equation again, we have
\begin{equation}\label{I1*0}
I_{1}^*=-\int_0^t\int_{\Gamma}\TP\p_t^2(a^{\mu\alpha }Q)+\cdots=\sigma\underbrace{\int_0^t\int_{\Gamma}\TP\p_t^2(\sqrt{g}\Delta_g \eta^{\alpha})\TP\p_t^2v_{\alpha}}_{I_{11}^*}+\cdots,
\end{equation}where the omitted terms can be directly bounded by $\int_0^t \PP.$ 
For $I_{11}^*$, one can mimic the proof of controlling $I_{11}$: taking derivative $\TP\p_t$ in the identity 
\[
\p_t(\sqrt{g}\Delta_g \eta^{\nu})=\p_i\left(\sqrt{g}g^{ij}(\delta^{\alpha}_{\lambda}-g^{kl}\p_k\eta^{\alpha}\p_l\eta^{\lambda})\p_j v^{\lambda})+\sqrt{g}(g^{ij}g^{kl}-g^{lj}g^{ik})\p_j\eta^{\alpha}\p_k\eta^{\lambda}\p_l v^{\lambda}\right),
\] and plugging that into $I_{11}^*$, one gets
\begin{equation}\label{I11*}
\begin{aligned}
I_{11}^*&=-\int_0^t\int_{\Gamma}\sqrt{g}g^{ij}(\delta^{\alpha}-g^{kl}\p_{k}\eta^{\alpha}\p_l\eta_{\lambda})\p_t\TP\p_j v^{\lambda}\p_t^2\TP\p_i v_{\alpha}\\
&~~~-\int_0^t\int_{\Gamma}\sqrt{g}(g^{ij}g^{kl}-g^{lj}g^{ik})\p_j\eta^{\alpha}\p_k\eta^{\lambda}\p_l\p_t\TP v^{\lambda}\p_t^2\TP\p_i v_{\alpha}+L_{111}^*\\
&=:I_{111}^*+I_{112}^*+L_{11}^*,
\end{aligned}
\end{equation} where $L_{11}^*$ is the analogue of $L_{11}$. The term $I_{112}^*$ is the analogue of $I_{112}$ which requires using the tricky determinant computation, and is estimated in the same way as $I_{112}$.
\begin{equation}\label{L11*}
I_{112}\lesssim\int_0^t\PP,~~~L_{11}^*\lesssim\epsilon\|v_t\|_{2.5}^2+P(\|v\|_{2.5+\delta})+P(\|v_0\|_3)+\int_0^t\PP
\end{equation}
For $I_{111}^*$, we just need to replace all the $\p_t^2$ appearing when controlling $I_{111}$ by $\TP\p_t$, and repeat all the steps, to get
\begin{equation}\label{I111*}
I_{111}^*\lesssim-\|\TP^2\Pi v_t\|_{0,\Gamma}^2+\epsilon\|v_t\|_{2.5}^2+P(\|v\|_{2.5+\delta})+P(\|v_0\|_3,\|Q(0)\|_2)+\int_0^t\PP.
\end{equation}
Combining and \eqref{I1*0}, \eqref{I11*}, \eqref{I111*} and \eqref{L11*} , we have
\begin{equation}\label{I1*}
I_1^*\lesssim-\|\TP^2\Pi v_t\|_{0,\Gamma}^2+\epsilon\|v_t\|_{2.5}^2+P(\|v\|_{2.5+\delta})+P(\|v_0\|_3,\|Q(0)\|_2)+\int_0^t\PP.
\end{equation}

Plugging \eqref{I0*}, \eqref{I2*} and \eqref{I1*} into \eqref{I*0}, we can derive the desired estimates from \eqref{tgvtt0} and \eqref{tgvttcancel}:
\begin{equation}\label{tgvtt}
\|\TP v_{tt}\|_0^2+\|\TP b_{tt}\|_0^2\lesssim -\|\TP^2\Pi v_t\|_{0,\Gamma}^2+\epsilon\|v_t\|_{2.5}^2+P(\|v\|_{2.5+\delta})+\PP_0+\int_0^t\PP.
\end{equation}

\section{Closing the estimates}\label{closing}
In this section we are going to close all the a priori estimates.

\subsection{Estimates at $t=0$}

Before summarising all the estimates we have gotten, we point out that, so far, all the estimates contain the initial value of several quantities. In this section we will control all these quantities in terms of the initial data, i.e. $v_0$ and $b_0.$ It is exactly here that we require the a priori estimates depend on $\|v_0\|_{4,\Gamma}$. Assume we have the a priori bound for $v_0$ and $b_0$, then the control of $\|b_t(0)\|_{2.5}$ automatically holds
\begin{equation}\label{btdata}
\|b_t(0)\|_{2.5}=\|(b_0\cdot\p)v_0\|_{2.5}\lesssim\|b_0\|_{2.5}\|v_0\|_{3.5}.
\end{equation} While the control of $\|v_t(0)\|_{2.5}$ requires the a priori bound for $Q_0:=Q(0)$. Our basic idea to proceed the remaining steps is:
\begin{equation}\label{data}
\underbrace{v_0,b_0}_{\Rightarrow b_t(0)}\xRightarrow{\Delta}Q_0\xRightarrow{\eqref{MHDL}}v_t(0)
\left.
\begin{cases}
&\xRightarrow{\p_t \eqref{MHDL}}b_{tt}(0)\\
&\xRightarrow{\Delta}Q_t(0)
\end{cases}\right\}\xRightarrow{\p_t\eqref{MHDL}} v_{tt}(0)
\left.
\begin{cases}
&\xRightarrow{\p_t^2\eqref{MHDL}}b_{ttt}(0)\\
&\xRightarrow{\Delta}Q_{tt}(0)
\end{cases}\right\}\xRightarrow{\p_t^2\eqref{MHDL}} v_{ttt}(0),
\end{equation}
here `$\Delta$' means using elliptic estimates as in Chapter \ref{ellipticQ}, `$\p_t$' means differentiating the MHD equation with respect to time variable $t$.

The first step is to control $\|Q_0\|_{3.5}$. Since $\eta(0)=$ Id, we can derive the estimate for $\|Q_0\|_{3.5}$ from the original MHD system \eqref{MHD}. Taking divergence in the first equation $D_t u-(B\cdot \nabla)B=-\nabla P$ and set $t=0$, one has
\[
\begin{aligned}
-\Delta Q_0=[\nabla, D_t]v|_{t=0} -[\nabla, b_0\cdot\nabla]b_0=\p_{\mu}v^{\nu}_0\p_{\mu}v^{\nu}_0-\p_{\mu}b^{\nu}_0\p_{\nu}b^{\mu}_0&~~in~~\Omega \\
\frac{\p Q_0}{\p N}=0&~~on~~\Gamma_0 \\
Q_0=0&~~on~~\Gamma
\end{aligned}
\] Then the standard elliptic estimate yields that
\begin{equation}\label{Qdata}
\|Q_0\|_{3.5}\lesssim\|v_0\|_{2.5}\|v_0\|_3+\|b_0\|_{2.5}\|b_0\|_3,
\end{equation} and thus one can derive the bound for $\|v_t(0)\|_{2.5}$ as well as $\|b_{tt}\|_{1.5}$:
\begin{equation}\label{vtbtdata}
\begin{aligned}
\|v_t(0)\|_{2.5}&\leq \|b_0\cdot\p b_0\|_{2.5}+\|\p Q_0\|_{2.5}\lesssim \|b_0\|_{2.5}\|b_0\|_{3.5}+\|Q_0\|_{3.5};\\
\|b_{tt}(0)\|_{1.5}&=\|b_0\cdot\p v_t\|_{1.5}\lesssim\|b_0\|_2\|v_t\|_{2.5}\lesssim \|b_0\|_2(\|b_0\|_{2.5}\|b_0\|_{3.5}+\|Q_0\|_{3.5}).
\end{aligned}
\end{equation}

To derive the bound for $\|Q_t(0)\|_{2.5}$, one needs to invoke \eqref{Qtin} and restrict it at $t=0$, with the following boundary condition
\[
\begin{aligned}
\frac{\p Q_t(0)}{\p N}=g^*|_{t=0}~~&\text{ on }\Gamma_0\text{ as in }\eqref{Qtbd} \\
Q_t(0)=q_t(0)=-\p_ta^{\mu\alpha}N_{\mu}q|_{t=0}-\sigma \Delta v_0^3~~&\text{ on }\Gamma,
\end{aligned}
\]and the standard elliptic estimate yields that 
\begin{equation}\label{Qtdata}
\|Q_t(0)\|_{2.5}\lesssim P(\|v_0\|_{3.5},\|b_0\|_{3.5},\|v_0\|_{4,\Gamma}),
\end{equation} and thus one can derive the bound for $\|v_{tt}(0)\|_{2.5}$ as well as $\|b_{ttt}\|_{0}$ by time differentiating \eqref{MHDL} again:
\begin{equation}\label{vtbtdata'}
\begin{aligned}
\|v_{tt}(0)\|_{1.5}&\leq \|b_0\cdot\p b_t(0)\|_{1.5}+\|\p_t (a^{\mu\alpha} \p_{\mu}Q)|_{t=0}\|_{1.5}\lesssim P(\|v_0\|_{3.5}\|b_0\|_{3.5},\|v_0\|_{4,\Gamma});\\
\|b_{ttt}(0)\|_{0}&=\|b_0\cdot\p v_{tt}\|_0\lesssim\|b_0\|_2\|v_{tt}\|_0\lesssim P(\|v_0\|_{3.5}\|b_0\|_{3.5},\|v_0\|_{4,\Gamma}).
\end{aligned}
\end{equation}

We remark that the last estimate illustrates that the term $\|v_0\|_{4,\Gamma}$ is necessary in the a priori estimates due to $\Delta v_0^3$ on the boundary. Besides, one can continue the steps by following the idea in \eqref{data} to get the bound for $\|v_{ttt}(0)\|_{0}$ and $\|Q_{tt}\|_1$ so we omit the details. We conclude that
\begin{equation}\label{datapriori}
\left.
\begin{aligned}
&~~~~\|v_t(0)\|_{2.5}+\|v_{tt}(0)\|_{1.5}+\|v_{ttt}(0)\|_0\\
&+\|b_t(0)\|_{2.5}+\|b_{tt}(0)\|_{1.5}+\|b_{ttt}(0)\|_0 \\
&+\|Q(0)\|_{3.5}+\|Q_{t}(0)\|_{2.5}+\|Q_{tt}(0)\|_1 \\
\end{aligned}\right\}\lesssim P(\|v_0\|_{3.5}\|b_0\|_{3.5},\|v_0\|_{4,\Gamma}).
\end{equation}

\subsection{Rewrite and summarise the estimates}
Now we summarise all the estimates that we have gotten. In order to apply Gronwall-type inequality, we have to ensure all of the a priori quantities are controlled by the sum of the initial data and the time integral of these quantities. Therefore we also need to rewrite the estimates of pressure shown in Proposition \ref{QQ}.

\paragraph*{Estimates of $\eta$:} 
\begin{equation}\label{etaclose}
\|\eta\|_{3.5}\leq\|v_0\|_{3.5}+\int_0^t\|v(s)\|_{3.5}ds,
\end{equation} obviously holds.

\paragraph*{Estimate of $v$:} 

From \eqref{estimatesofQtt}, \eqref{datapriori}, \eqref{divcurlvb} and \eqref{v30} in Proposition \ref{v30}, we have:
\begin{equation}\label{vclose0}
\begin{aligned}
\|v\|_{3.5}^2&\lesssim P(\|v_0\|_{3.5},\|b_0\|_{3.5})+\PP \int_0^t\PP +\|Q_t\|_1^2+P(\|v\|_{2.5+\delta})\\
&\lesssim P(\|v_0\|_{3.5},\|b_0\|_{3.5})+\PP \int_0^t\PP +\|Q_t(0)\|_1^2+\int_0^t \|Q_{tt}(s)\|_1^2 ds+P(\|v\|_{2.5+\delta}) \\
&\lesssim P(\|v_0\|_{3.5},\|b_0\|_{3.5})+\PP \int_0^t\PP+P(\|v\|_{2.5+\delta})
\end{aligned}
\end{equation}

\paragraph*{Estimate of $v_t$:} 

For $v_t$, we notice that the $\epsilon$-term on the RHS of \eqref{vt3} can be absorbed by $\|v_t\|_{2.5}$. Therefore, combining this with \eqref{divcurlvtbt}, \eqref{tgvttbtt}, we get
\begin{equation}\label{vtclose0}
\begin{aligned}
\|v_{t}\|_{2.5}^2&\lesssim\PP_0+\int_0^t\PP+\|v_t\|_{2,\Gamma}^2+\|\TP v_{tt}\|_0^2+\|\TP b_{tt}\|_0^2 \\
&\lesssim \PP_0+\int_0^t\PP+P(\|v\|_{2.5+\delta})+\underbrace{\|\TP^2\Pi v_t\|_{0,\Gamma}^2+\|\TP v_{tt}\|_0^2+\|\TP b_{tt}\|_0^2}_{\text{using }\eqref{tgvttbtt}} \\
&\lesssim  P(\|v_0\|_{3.5},\|b_0\|_{3.5},\|v_0\|_{4,\Gamma})+\int_0^t\PP+P(\|v\|_{2.5+\delta}).
\end{aligned}
\end{equation}

\paragraph*{Estimates of $v_{tt},v_{ttt},b_{ttt}$:} 

Similarly, one can get the estimates of $v_{tt}$, $v_{ttt}$ and $b_{ttt}$ simultaneously just by mimicing the derivation of \eqref{vtclose0}. Using \eqref{divcurlvttbtt}, \eqref{tgvtttbttt} and absorbing all the $\epsilon$-terms to LHS, we have
\begin{equation}\label{vttclose0}
\begin{aligned}
\|v_{tt}\|_{1.5}^2+\| v_{ttt}\|_0^2+\| b_{ttt}\|_0^2&\lesssim\PP_0+\int_0^t \PP +\|v_{tt}^3\|_{1,\Gamma}^2 +\|\TP v_{ttt}\|_0^2+\|\TP b_{ttt}\|_0^2 \\
&\lesssim\PP_0+\int_0^t \PP +\underbrace{\|\TP(\Pi v_{tt})\|_{0,\Gamma}^2+\|\TP v_{ttt}\|_0^2+\|\TP b_{ttt}\|_0^2}_{\text{using }\eqref{tgvtttbttt}}\\
&\lesssim P(\|v_0\|_{3.5},\|b_0\|_{3.5},\|v_0\|_{4,\Gamma})+\int_0^t\PP\\
&~~~~+P(\|v\|_{2.5+\delta})+\epsilon\|Q_{tt}\|_1^2.
\end{aligned}
\end{equation}

\paragraph*{Estimates of $b,b_t,b_{tt}$:} These estimates have been derived in Proposition \ref{dc}:
\begin{equation}\label{bbtbtt}
\|b\|_{3.5}^2+\|b_t\|_{2.5}^2+\|b_{tt}\|_{1.5}^2\lesssim P(\|v\|_{2.5
+\delta},\|b\|_{2.5+\delta})+P(\|v_0\|_{3.5},\|b_0\|_{3.5},\|v_0\|_{4,\Gamma})+\int_0^t\PP.
\end{equation}

Since our a priori quantities contain $Q,Q_t,Q_{tt}$, we still need to rewrite the pressure estimates into the sum of initial data and time integral of these a priori quantites instead of only a polynomial of these quantities as shown in Proposition \ref{ellipticQ}.
\paragraph*{Estimates of $Q$, $Q_t$, $Q_{tt}$:}
For the estimates of $Q$, we invoke \eqref{vtclose0} and \eqref{bbtbtt} to rewrite the pressure estimates in Proposition \ref{QQ} as follows
\begin{equation}\label{Qclose}
\begin{aligned}
\|Q\|_{3.5}^2&\lesssim P(\|v\|_{2.5+\delta},\|b\|_{2.5+\delta})+P(\|v_0\|_{3.5},\|b_0\|_{3.5},\|v_0\|_{4,\Gamma})+\int_0^t\PP+\|b_0\|_{2.5}^2(\PP_0+\int_0^t\PP)\\
&\lesssim P(\|v\|_{2.5+\delta},\|b\|_{2.5+\delta})+P(\|v_0\|_{3.5},\|b_0\|_{3.5},\|v_0\|_{4,\Gamma})+\int_0^t\PP.
\end{aligned}
\end{equation}
Similary as above, we rewrite $\|v_t\|_{1.5}, \|v\|_{2.5}, \|v\|_2, \|Q\|_{2.5},\|Q_t\|_1, \|b_t\|_{1.5},\|b\|_2$ as the sum of initial data and time integral of the a priori quantities, then use Young's inequality and Jensen's inequality and invoke \eqref{vtclose0}, \eqref{vttclose0} and \eqref{bbtbtt} to get
\begin{equation}\label{Qtclose}
\|Q_t\|_{2.5}^2\lesssim  P(\|v\|_{2.5+\delta})+P(\|b\|_{2.5+\delta})+P(\|v_0\|_{3.5},\|b_0\|_{3.5},\|v_0\|_{4,\Gamma})+\int_0^t\PP \\
\end{equation}
and
\begin{equation}\label{Qttclose}
\|Q_{tt}\|_{1}^2\lesssim P(\|v\|_{2.5+\delta})+P(\|b\|_{2.5+\delta})+P(\|v_0\|_{3.5},\|b_0\|_{3.5},\|v_0\|_{4,\Gamma})+\int_0^t\PP. \\
\end{equation}

\subsection{Eliminate lower order terms}
So far, it remains to deal with the lower order terms containing neither in the time integral, nor in the initial data, specifically, $P(\|v\|_{2.5+\delta})$ and $P(\|b\|_{2.5+\delta})$ for arbitrarily small $\delta\in(0,0.5)$. Therefore, it suffices to choose a suitable $\delta\in(0,0.5)$ and control $P(\|v\|_{2.5+\delta})$.

\paragraph*{Control of $P(\|v\|_{2.5+\delta})$:} Since $\|v\|_{2.5+\delta}\leq\frac{1}{2}+\frac{1}{2}\|v\|^2_{2.5+\delta}$, we may assume  $P(\|v\|_{2.5+\delta})$ is the combination of terms of the form $\|v\|_{2.5+\delta}^d$ with $d\geq 2$. Then by the interpolation inequality in Lemma \ref{gnsineq}, we have
\[
\|v\|_{2.5+\delta}^d\lesssim \|v\|_{3}^{2\delta d}\|v\|_0^{(1-2\delta)d}.
\] Then choose $\delta$ sufficiently close to 0, for different $d$'s, such that 
\[
p_d:=\frac{1}{d\delta}>1.
\]One can use $\epsilon$-Young's inequality with $p_d$ and its dual index to derive 
\[
\|v\|_{2.5+\delta}^k\lesssim \epsilon\|v\|_{3}^2+\|v\|_0^{b}\lesssim\epsilon\|v\|_{3.5}^2+P(\|v_0\|_{2.5})+\int_0^tP(\|v_t(s)\|_{2.5})ds\text{ for some }b>0,
\] and thus 
\begin{equation}\label{errorv}
P(\|v\|_{2.5+\delta})\lesssim\epsilon\|v\|_{3.5}^2+P(\|v_0\|_{2.5})+\int_0^t \PP.
\end{equation}
Similarly we have
\begin{equation}\label{errorb}
P(\|b\|_{2.5+\delta})\lesssim\epsilon\|b\|_{3.5}^2+P(\|b_0\|_{2.5})+\int_0^t \PP.
\end{equation}
\subsection{Gronwall-type argument}
Recall in \eqref{N(t)} we have 
\[
\begin{aligned}
N(t)=\|\eta\|_{3.5}^2&+\|v\|_{3.5}^2+\|v_t\|_{2.5}^2+\|v_{tt}\|_{1.5}^2+\|v_{ttt}\|_{0}^2+\|b\|_{3.5}^2+\|b_t\|_{2.5}^2+\|b_{tt}\|_{1.5}^2+\|b_{ttt}\|_{0}^2\\
&+\|Q\|_{3.5}^2+\|Q_t\|_{2.5}^2+\|Q_{tt}\|_1^2.
\end{aligned}
\]
Combining this with \eqref{datapriori}-\eqref{errorb}, and absorbing all the $\epsilon$-terms, we have proved:
\[
N(t)\lesssim P(\|v_0\|_{3.5},\|b_0\|_{3.5},\|v_0\|_{4,\Gamma})+P(N(t))\int_0^t P(N(s))ds.
\]

By the Gronwall-type argument in \cite{tao2006nonlinear}, we have:
\[
N(t)\leq C(\|v_0\|_{3.5},\|b_0\|_{3.5},\|v_0\|_{4,\Gamma}),
\]as desired. This ends the proof of our result.
\begin{flushright}
$\square$
\end{flushright}

\end{document}